\DeclareMathOperator{\bd}{bd}
\DeclareMathOperator{\cls}{cl}
\DeclareMathOperator*{\Limsup}{Lim\,sup}
\DeclareMathOperator*{\Liminf}{Lim\,inf}
\newcommand{\amd}{\mspace{10mu} \text{and} \mspace{10mu}}
\newcommand{\argdot}{\mspace{2mu} \cdot \mspace{2mu}}
\newcommand{\ep}{\varepsilon}
\newcommand{\R}{\mathbb{R}}
\newcommand{\Set}[2]{\left\{ \mspace{1mu} #1 : #2 \mspace{1mu} \right\}}
\newcommand{\tto}{\rightrightarrows}
\newcommand{\Z}{\mathbb{Z}}
\theoremstyle{plain}
\newtheorem{theorem}{Theorem}[section]
\newtheorem{lemma}[theorem]{Lemma}
\newtheorem{corollary}[theorem]{Corollary}
\theoremstyle{definition}
\newtheorem{definition}[theorem]{Definition}
\theoremstyle{remark}
\newtheorem{remark}[theorem]{Remark}
\newtheorem{notation}{Notation}
\numberwithin{equation}{section}
\begin{document}

\title{Asymptotic compactness in topological spaces}
\author{Junya Nishiguchi\thanks{Mathematical Science Group, Advanced Institute for Materials Research (AIMR), Tohoku University,
2-1-1 Katahira, Aoba-ku, Sendai, 980-8577, Japan}
\footnote{E-mail: \url{junya.nishiguchi.b1@tohoku.ac.jp}}}
\date{}

\maketitle

\begin{abstract}
The omega limit sets plays a fundamental role to construct global attractors for topological semi-dynamical systems with continuous time or discrete time.
Therefore, it is important to know when omega limit sets become nonempty compact sets.
The purpose of this paper is to understand the mechanism under which a given net of subsets of topological spaces is compact in the asymptotic sense.
For this purpose, we introduce the notion of asymptotic compactness for nets of subsets and study the connection with the compactness of the limit sets.
In this paper, for a given net of nonempty subsets, we prove that the asymptotic compactness and the property that the limit set is a nonempty compact set to which the net converges from above are equivalent in uniformizable spaces.
We also study the sequential version of the notion of asymptotic compactness by introducing the notion of sequentiality of directed sets.

\begin{flushleft}
\textbf{2020 Mathematics Subject Classification}.
Primary 54A20, 54D30, 54E15; Secondary 37B02, 54C60
\end{flushleft}

\begin{flushleft}
\textbf{Keywords}.
Limit sets; compactness; uniformizable spaces; general topology; set-valued analysis.
\end{flushleft}

\end{abstract}

\tableofcontents

\section{Introduction}

In the theory of topological semi-dynamical systems with continuous time or discrete time, the omega limit sets is a one of the fundamental objects for the asymptotic behavior of the orbits of the semi-dynamical systems (ref.\ Gottschalk and Hedlund~\cite[Section 10]{Gottschalk--Hedlund 1955}).
The omega limit sets also plays a fundamental role in the construction of global attractors in infinite-dimensional semi-dynamical systems (refs.\ Hale~\cite{Hale 1988}, Sell and You~\cite[Chapter 2]{Sell--You 2002}, and Raugel~\cite{Raugel 2002}).
Therefore, it is important to understand when omega limit sets are nonempty compact sets when the phase space is neither compact nor locally compact.

A related notion for the non-emptiness and the compactness of omega limit sets is known in the literature.
It is the \textit{asymptotic compactness} introduced in \cite{Sell--You 2002} when the phase space of semi-dynamical systems is a metric space.
This notion of asymptotic compactness is expressed by sequences.
Therefore, it is not apparent how this notion should be generalized to the case that the phase space is a general topological space because omega limit points (i.e., points belonging to omega limit sets) are not expected to be expressed by sequences.

In this paper, we will tackle this problem by an approach considering nets $(X_s)_{s \in S}$ of subsets of a topological space $X$ for some directed set $S = (S, \le)$.
Then the \textit{limit set} $L(X_s)_{s \in S}$ is defined by
\begin{equation*}
	L(X_s)_{s \in S} = \bigcap_{s \in S} \cls \bigcup_{t \in S, t \ge s} X_t.
\end{equation*}
Here $\cls$ denotes the closure operator in the topological space $X$.
For the above mentioned problem of omega limit sets of topological semi-dynamical systems with continuous time or discrete time, the directed set $S$ corresponds to the set of nonnegative reals $\R^+$ or the set of nonnegative integers $\Z^+$, respectively.
Then the \textit{omega limit set} $\omega_\varPhi(E)$ of a subset $E \subset X$ for a semiflow $\varPhi \colon S \times X \to X$ is given by the limit set of the net $(\varPhi(\{s\} \times E))_{s \in S}$ of subsets of $X$.
Here a \textit{semiflow} $\varPhi$ is a map with the properties that (i) $\varPhi(0, x) = x$ for all $x \in X$ and (ii) $\varPhi(t + s, x) = \varPhi(t, \varPhi(s, x))$ for all $t, s \in S$ and all $x \in X$.

This approach is general from the perspective on considering omega limit sets of topological semi-dynamical systems with continuous time or discrete time.
At the same time, this approach should be natural because the nets $(X_s)_{s \in S}$ contains nets of points in $X$, which are necessary to consider the convergence concept in general topological spaces.
Furthermore, this approach permits us to study omega limit sets of topological semi-dynamical systems with a \textit{preordered abelian phase group} $T = (T, +, \le)$ (i.e., an abelian phase group with translation-invariant preorder) because the positive cone given by $T^+ := \Set{t \in T}{t \ge 0}$ becomes a directed set.
For example, see also \cite{Hahn 1961} for a study of this direction.

For the net $(X_s)_{s \in S}$ of subsets of $X$, we will obtain characterizations of points belonging to the limit set $L(X_s)_{s \in S}$.
Then we will define the notions of \textit{asymptotic compactness} and \textit{weak asymptotic compactness} of the net $(X_s)_{s \in S}$ based on these characterizations.
See Definitions~\ref{dfn:asymptotic cptness} and \ref{dfn:weak asymptotic cptness} for further details.
These are the key notions of this paper, which are expected to be related with the compactness of the limit set $L(X_s)_{s \in S}$.
We will also introduce the convergence concepts for the net $(X_s)_{s \in S}$ called the \textit{convergence from above} and the \textit{convergence from below} based on the upper semicontinuity and the lower semicontinuity of set-valued maps because $(X_s)_{s \in S}$ is a set-valued map from $S$ to $X$.
See Definitions~\ref{dfn:convergence from above of a net of subsets} and \ref{dfn:convergence from below of a net of subsets} for their definitions.
To make clear the above mentioned connection, we will introduce the terminology of the \textit{limit set compactness} of $(X_s)_{s \in S}$, which means that $L(X_s)_{s \in S}$ is a nonempty compact set to which $(X_s)_{s \in S}$ converges from above.

Unfortunately, it seems to be optimistic to expect that the asymptotic compactness and the limit set compactness are equivalent in any general topological space.
One of the reason is that even if we choose a net $(z_\alpha)_{\alpha \in A}$ for some directed set $A$ in the limit set $L(X_s)_{s \in S}$, we cannot associate this net with a net in general which is related to the net $(X_s)_{s \in S}$.
Then we cannot extract the full power of the asymptotic compactness.

To overcome this difficulty, we rely on the uniformizability of the topological space $X$.
Under the assumption of the uniformizabiity, for a given net $(z_\alpha)_{\alpha \in A}$ in $L(X_s)_{s \in S}$, we can choose a net $(y_\beta)_{\beta \in B}$ for some directed set $B$ which is related to the net $(X_s)_{s \in S}$ so that $z_\alpha$ and $y_\beta$ are in some uniform nearness.
Here the uniformizability is essentially used.
The mathematically precise statement is given in the proof of Theorem~\ref{thm:asymptotic cptness and limit set cptness in uniformizable sp}.

The final task of this paper is to clarify the relation between the asymptotic compactness of the net $(X_s)_{s \in S}$ and the asymptotic compactness for semiflows with continuous time in metric spaces introduced in \cite{Sell--You 2002}.
For this purpose, we will introduce the sequential versions of the asymptotic compactness and the weak asymptotic compactness (see Definitions~\ref{dfn:asymptotic seq cptness} and \ref{dfn:weak asymptotic seq cptness}).
To introduce the sequential versions, we need to restrict a class of directed sets.
This is a class of \textit{sequential directed sets}, in which a directed set has a sequence $(s_n)_{n = 1}^\infty$ which becomes larger and larger as $n \to \infty$.
See Definition~\ref{dfn:sequential directed set} for the precise definition.
Then we will obtain the equivalence between the asymptotic compactness and the asymptotic sequential compactness for the net $(X_s)_{s \in S}$ when $S$ is sequential and the topological space $X$ is pseudo-metrizable.
This shows the above mentioned equivalence.

This paper is organized as follows.
In Section~\ref{sec:limit sets and convergence}, we study the limit set and the convergence property for a given net $(X_s)_{s \in S}$ of subsets of $X$ for some directed set $S$.
Then we obtain characterizations of points belonging to the limit set $L(X_s)_{s \in S}$ in Theorem~\ref{thm:characterization of limit set}.
We also study various connections between the convergence of the net $(X_s)_{s \in S}$ and the limit set $L(X_s)_{s \in S}$.
In Section~\ref{sec:asymptotic cptness and limit set cptness}, we introduce the notions of asymptotic compactness and weak asymptotic compactness of the net $(X_s)_{s \in S}$ of subsets in Definitions~\ref{dfn:asymptotic cptness} and \ref{dfn:weak asymptotic cptness}.
In Lemma~\ref{lem:eventual Lagrange stability and limit set cptness in locally cpt regular sp}, we reveal the connection between the asymptotic compactness and the limit set compactness of $(X_s)_{s \in S}$ when $X$ is a locally compact regular space by using the notion of eventual Lagrange stability.
In Section~\ref{sec:asymptotic cptness and limit set cptness in uniformizable spaces}, we investigate the asymptotic compactness of nets of subsets in uniformizable spaces.
One of the main result of this paper is Theorem~\ref{thm:equivalence between asymptotic cptness and limit set cptness in uniformizable sp}, which shows that the following properties are equivalent when $X$ is uniformizable: (i) $(X_s)_{s \in S}$ converges from above to some nonempty compact set, (ii) $(X_s)_{s \in S}$ is asymptotically compact, and (iii) $(X_s)_{s \in S}$ is limit set compact.
In Section~\ref{sec:asymptotic seq cptness}, we study the sequential versions of the asymptotic compactness and the weak asymptotic compactness.
Under the assumption that $S$ is sequential and $X$ is pseudo-metrizable, we obtain the sequential version of Theorem~\ref{thm:equivalence between asymptotic cptness and limit set cptness in uniformizable sp} in Theorem~\ref{thm:asymptotic seq cptness and limit set cptness in pseudo-metrizable sp}.
By combining these theorems, we finally obtain the equivalence between the asymptotic compactness and the asymptotic sequential compactness in Corollary~\ref{cor:asymptotic cptness and asymptotic seq cptness in pseudo-metrizable sp}.

\section{Limit sets and convergence of nets of subsets}\label{sec:limit sets and convergence}

Throughout this section, let $X$ be a topological space.

\subsection{Limit sets}

In this subsection, we will investigate characterizations of points belonging to the limit set of a net of subsets of $X$.
For this purpose, we first recall the convergence concepts in topological spaces via nets.
We refer the reader to \cite{Kelley 1955} as a general reference of general topology.

\subsubsection{Convergence in topological spaces}

\begin{definition}[ref.\ \cite{Kelley 1955}]\label{dfn:directed set}
A nonempty set $A$ together with a preorder $\le$ on $A$ (i.e., a binary relation on $A$ with the reflexivity and the transitivity) is called a \textit{directed set} if every pair of $a, b \in A$ has an upper bound $c \in A$, i.e., an element $c \in A$ satisfying $a \le c$ and $b \le c$.
A directed set $(A, \le)$ is called a \textit{directed poset} if $\le$ is a partial order, i.e., $\le$ satisfies the antisymmetry.
\end{definition}

\begin{remark}
For each directed sets $A = (A, \le)$ and $B = (B, \le)$, the Cartesian product $A \times B$ is considered as a directed set with the preorder $\le$ defined as follows: $(\alpha_1, \beta_1) \le (\alpha_2, \beta_2)$ if $\alpha_1 \le \alpha_2$ and $\beta_1 \le \beta_2$.
The directed set $(A \times B, \le)$ is called the \textit{product directed set}.
\end{remark}

\begin{definition}[ref.\ \cite{Kelley 1955}]\label{dfn:nets and sequences}
A family $(x_\alpha)_{\alpha \in A}$ in some set for some directed set $A$ is called a \textit{net}.
\end{definition}

\begin{definition}[ref.\ \cite{Kelley 1955}]
Let $A = (A, \le)$ be a directed set and $U \subset X$ be a subset.
\begin{itemize}
\item A net $(x_\alpha)_{\alpha \in A}$ in $X$ is said to be \textit{eventually} in $U$ if there exists $\alpha_0 \in A$ such that for all $\alpha \in A$, $\alpha \ge \alpha_0$ implies $x_\alpha \in U$.
\item A net $(x_\alpha)_{\alpha \in A}$ in $X$ is said to be \textit{frequently} in $U$ if for every $\alpha \in A$, there exists $\beta \in A$ such that $\beta \ge \alpha$ and $x_\beta \in U$.
\end{itemize}
\end{definition}

\begin{definition}[ref.\ \cite{Kelley 1955}]\label{dfn:convergence and cluster pt}
Let $A = (A, \le)$ be a directed set and $x \in X$ be given.
\begin{itemize}
\item A net $(x_\alpha)_{\alpha \in A}$ in $X$ is said to \textit{converge} to $x$ if $(x_\alpha)_{\alpha \in A}$ eventually in every neighborhood of $x$.
\item A net $(x_\alpha)_{\alpha \in A}$ in $X$ is said to have a \textit{cluster point} $x$ if $(x_\alpha)_{\alpha \in A}$ frequently in every neighborhood of $x$.
\end{itemize}
\end{definition}

\begin{definition}[ref.\ \cite{Kelley 1955}]\label{dfn:cofinality}
Let $A = (A, \le)$ be a directed set.
A subset $S \subset A$ is said to be \textit{cofinal} if for every $\alpha \in A$, there exists $\alpha' \in S$ such that $\alpha' \ge \alpha$.
\end{definition}

\begin{remark}
A subset $S$ of the directed set $A$ is not necessarily directed.
We note that any cofinal subset of $A$ is directed.
\end{remark}

\begin{definition}\label{dfn:finality}
Let $A = (A, \le)$ be a directed set and $B$ be a set.
A map $h \colon B \to A$ is said to be \textit{final} if the image $h(B) := \Set{h(\beta)}{\beta \in B}$ is a cofinal subset of $A$.
\end{definition}

In this paper, we adopt the following notation.

\begin{notation}
Let $A = (A, \le), B = (B, \le)$ be directed sets and $h \colon B \to A$ be a map.
By $h(\beta) \to \bd(A)$ as $\beta \to \bd(B)$, we mean that for every $\alpha \in A$, there exists $\beta_0 \in B$ such that for all $\beta \in B$, $\beta \ge \beta_0$ implies $h(\beta) \ge \alpha$.
\end{notation}

We note that a map $h \colon A \to A$ satisfying $h(\alpha) \ge \alpha$ for all $\alpha \in A$ has the property $h(\alpha) \to \bd(A)$ as $\alpha \to \bd(A)$.

\begin{definition}[ref.\ \cite{Kelley 1955}]\label{dfn:subnet}
Let $(x_\alpha)_{\alpha \in A}$ be a net in $X$ for some directed set $A$.
For every directed set $B$, a net $(y_\beta)_{\beta \in B}$ is called a \textit{subnet} of $(x_\alpha)_{\alpha \in A}$ if there exists a map $h \colon B \to A$ such that (i) $h(\beta) \to \bd(A)$ as $\beta \to \bd(B)$ and (ii) $y_\beta = x_{h(\beta)}$ holds for all $\beta \in B$.
\end{definition}

In this paper, we do not adopt the convention that the map $h \colon B \to A$ is monotone and final for a subnet $(x_{h(\beta)})_{\beta \in B}$.
The following theorem gives characterizations of cluster points of nets.
See \cite{Kelley 1955} for the proof.
See also Theorem~\ref{thm:characterization of limit set} for an extension of this characterizations to nets of subsets.

\begin{theorem}[ref.\ \cite{Kelley 1955}]\label{thm:cluster pt and convergent subnet}
Let $A$ be a directed set, $(x_\alpha)_{\alpha \in A}$ be a net in $X$, and $x \in X$ be given.
Then the following properties are equivalent:
\begin{enumerate}[label=\textup{(\alph*)}]
\item $x$ is a cluster point of $(x_\alpha)_{\alpha \in A}$.
\item There exist a directed set $B$ and a monotone final map $h \colon B \to A$ such that $(x_{h(\beta)})_{\beta \in B}$ converges to $x$.
\item There exist a directed set $B$ and a map $h \colon B \to A$ such that $h(\beta) \to \bd(A)$ as $\beta \to \bd(B)$ and $(x_{h(\beta)})_{\beta \in B}$ converges to $x$.
\end{enumerate}
\end{theorem}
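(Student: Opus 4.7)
The plan is to show (a) $\Rightarrow$ (b) $\Rightarrow$ (c) $\Rightarrow$ (a). The only nontrivial direction is (a) $\Rightarrow$ (b); the others are essentially unwrappings of the definitions.

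For (a) $\Rightarrow$ (b), I would use the standard neighborhood-directed construction. Let $\mathcal{N}(x)$ denote the set of neighborhoods of $x$ preordered by reverse inclusion, which is directed because $\mathcal{N}(x)$ is closed under finite intersections. Equip the product set $A \times \mathcal{N}(x)$ with the product preorder, and set
\begin{equation*}
B = \bigl\{\mspace{1mu} (\alpha, U) \in A \times \mathcal{N}(x) : x_\alpha \in U \mspace{1mu}\bigr\}
\end{equation*}
with the inherited preorder. The fact that $x$ is a cluster point is exactly what I need to show $B$ is directed: given $(\alpha_1, U_1), (\alpha_2, U_2) \in B$, pick an upper bound $\alpha_3 \in A$ of $\alpha_1, \alpha_2$, set $U_3 := U_1 \cap U_2$, and use clustering to find $\alpha_4 \ge \alpha_3$ with $x_{\alpha_4} \in U_3$; then $(\alpha_4, U_3) \in B$ dominates both pairs. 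Define $h \colon B \to A$ as the projection $h(\alpha, U) = \alpha$. By construction $h$ is monotone, and it is final since for any $\alpha_0 \in A$ and any fixed $U_0 \in \mathcal{N}(x)$, clustering again yields $\alpha \ge \alpha_0$ with $x_\alpha \in U_0$, so $(\alpha, U_0) \in B$. Finally, to check convergence of $(x_{h(\beta)})_{\beta \in B}$ to $x$, fix a neighborhood $U$ of $x$, pick any $(\alpha_0, U) \in B$, and observe that whenever $(\beta, V) \ge (\alpha_0, U)$ in $B$, the relation $V \ge U$ in $\mathcal{N}(x)$ means $V \subset U$, so $x_\beta \in V \subset U$.

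For (b) $\Rightarrow$ (c), I just need to observe that if $h \colon B \to A$ is monotone and final, then $h(\beta) \to \bd(A)$ as $\beta \to \bd(B)$: given $\alpha \in A$, finality provides $\beta_0 \in B$ with $h(\beta_0) \ge \alpha$, and monotonicity then forces $h(\beta) \ge \alpha$ for every $\beta \ge \beta_0$.

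For (c) $\Rightarrow$ (a), I combine the two defining conditions of the subnet with the convergence. Given a neighborhood $U$ of $x$ and any $\alpha \in A$, choose $\beta_0$ such that $\beta \ge \beta_0$ implies $h(\beta) \ge \alpha$, and choose $\beta_1$ such that $\beta \ge \beta_1$ implies $x_{h(\beta)} \in U$. Any common upper bound $\beta \ge \beta_0, \beta_1$ in $B$ then yields an index $h(\beta) \ge \alpha$ in $A$ with $x_{h(\beta)} \in U$, which is the defining property of a cluster point.

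The only step that might be considered an obstacle is verifying that the set $B$ in (a) $\Rightarrow$ (b) is directed; this is precisely where the cluster-point hypothesis is essential, as it is needed to make sure $B$ is nonempty above any candidate upper bound from $A \times \mathcal{N}(x)$. The rest is bookkeeping, and none of the steps use any structural hypothesis on $X$ beyond the definition of a topology.
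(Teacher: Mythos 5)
Your proof is correct, and it is the standard argument for this result: the paper itself omits the proof and refers to Kelley, whose proof uses exactly your construction of the directed set $B \subset A \times \mathcal{N}_x$ with the projection map. It also mirrors the construction the paper uses for the analogous implication (a) $\Rightarrow$ (b) in Theorem~\ref{thm:characterization of limit set}, so nothing further is needed.
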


\begin{notation}
For each $x \in X$, let $\mathcal{N}_x$ denote the set of all neighborhoods of $x$.
It is considered to be a directed poset with the partial order $\le$ defined as follows:
For all $U_1, U_2 \in \mathcal{N}_x$, $U_1 \le U_2$ if $U_1 \supset U_2$.
\end{notation}

\begin{notation}
For each subset $E \subset X$, let $\cls(E)$ denote the closure of $E$ (i.e., the smallest closed set of $X$ containing $E$).
Then $x \in \cls(E)$ if and only of $U \cap E \ne \emptyset$ holds for every $U \in \mathcal{N}_x$.
\end{notation}

\begin{remark}\label{rmk:set of cluster pts}
Let $A = (A, \le)$ be a directed set and $(x_\alpha)_{\alpha \in A}$ be a net in $X$.
Then $x \in \bigcap_{\alpha \in A} \cls \Set{x_\beta}{\beta \in A, \beta \ge \alpha}$ if and only if for every $\alpha \in A$ and every $U \in \mathcal{N}_x$,
\begin{equation*}
	U \cap \Set{x_\beta}{\beta \in A, \beta \ge \alpha} \ne \emptyset.
\end{equation*}
Therefore,
\begin{equation*}
	\bigcap_{\alpha \in A} \cls \Set{x_\beta}{\beta \in A, \beta \ge \alpha}
\end{equation*}
is equal to the set of cluster points of $(x_\alpha)_{\alpha \in A}$.
\end{remark}

The following is another characterization of the points belonging to the closure of a nonempty subset.

\begin{lemma}\label{lem:closure and net}
Let $E \subset X$ be a nonempty subset and $x \in X$ be given.
Then $x \in \cls(E)$ if and only if there exist a directed set $A$ and a net $(x_\alpha)_{\alpha \in A}$ in $E$ such that $(x_\alpha)_{\alpha \in A}$ converges to $x$.
\end{lemma}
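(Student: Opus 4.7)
The plan is to prove both directions using the directed poset $\mathcal{N}_x$ of neighborhoods of $x$ (already introduced in the paper's notation), which is the natural index set for this kind of argument.

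For the forward direction, suppose $x \in \cls(E)$. By the characterization of closure recalled just before the lemma, $U \cap E \neq \emptyset$ for every $U \in \mathcal{N}_x$. Using the axiom of choice, for each $U \in \mathcal{N}_x$ pick a point $x_U \in U \cap E$. This yields a net $(x_U)_{U \in \mathcal{N}_x}$ in $E$ indexed by the directed poset $\mathcal{N}_x$. I would then verify convergence: given any $V \in \mathcal{N}_x$, for every $U \in \mathcal{N}_x$ with $U \ge V$ (i.e., $U \subset V$), we have $x_U \in U \subset V$, so the net is eventually in $V$. Hence $(x_U)_{U \in \mathcal{N}_x}$ converges to $x$.

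For the backward direction, suppose there exist a directed set $A$ and a net $(x_\alpha)_{\alpha \in A}$ in $E$ converging to $x$. For any $U \in \mathcal{N}_x$, convergence gives some $\alpha_0 \in A$ with $x_\alpha \in U$ for all $\alpha \ge \alpha_0$; picking any such $\alpha$ (which exists because $A$ is nonempty as a directed set) yields a point of $U \cap E$, so $U \cap E \neq \emptyset$. Applying the closure characterization again gives $x \in \cls(E)$.

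No serious obstacle is anticipated here: this is a standard general-topology fact, and the construction in the forward direction reduces to an application of the axiom of choice to the neighborhood filter, while the backward direction is essentially a restatement of eventual membership. The only minor care needed is to invoke $\mathcal{N}_x$ as a directed poset (with reverse inclusion) so that the indexing really produces a net in the sense of Definition~\ref{dfn:nets and sequences}, and to keep the order convention $U_1 \le U_2 \iff U_1 \supset U_2$ consistent with the paper's notation.
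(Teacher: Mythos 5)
Your proof is correct and is precisely the standard argument that the paper itself omits (the lemma is stated with ``the proof is standard, and therefore, it can be omitted''): index by the directed poset $\mathcal{N}_x$ under reverse inclusion, choose $x_U \in U \cap E$ for the forward direction, and use eventual membership in each neighborhood for the converse. Both directions are handled cleanly and consistently with the paper's order convention on $\mathcal{N}_x$, so nothing further is needed.
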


The proof is standard, and therefore, it can be omitted.

\subsubsection{Limit sets and their characterizations}

\begin{definition}\label{dfn:limit set}
Let $S = (S, \le)$ be a directed set and $(X_s)_{s \in S}$ be a net of subsets of $X$.
The subset $L(X_s)_{s \in S} \subset X$ defined by
\begin{equation*}
	L(X_s)_{s \in S} = \bigcap_{s \in S} \cls \bigcup_{t \in S, t \ge s} X_t
\end{equation*}
is called the \textit{limit set} of $(X_s)_{s \in S}$.
\end{definition}

\begin{remark}
When $S = \R^+$ or $\Z^+$ (the set of nonnegative real numbers or the set of nonnegative integers) and $X_s = \varPhi(\{s\} \times E)$ for some semiflow $\varPhi \colon S \times X \to X$ and for some subset $E \subset X$, the limit set $L(X_s)_{s \in S}$ is called the \textit{omega limit set} of $E$ for $\varPhi$.
It will be denoted by $\omega_\varPhi(E)$.
\end{remark}

The above limit set should be distinguished from the set-theoretic limit set.
We note that from Remark~\ref{rmk:set of cluster pts}, the limit set is a generalization of the set of cluster points for nets of points.
The following theorem gives characterizations of points belonging to a limit set.
It is considered to be a generalization of Theorem~\ref{thm:cluster pt and convergent subnet}.

\begin{theorem}\label{thm:characterization of limit set}
Let $S = (S, \le)$ be a directed set, $(X_s)_{s \in S}$ be a net of subsets of $X$, and $y \in X$ be given.
Then the following properties are equivalent:
\begin{enumerate}[label=\textup{(\alph*)}]
\item $y \in L(X_s)_{s \in S}$.
\item For every directed set $I$ and every subnet $(X_{s_i})_{i \in I}$ of $(X_s)_{s \in S}$, there exist a directed set $J$, a monotone final map $h \colon J \to I$, and $(y_j)_{j \in J} \in \prod_{j \in J} \bigcup_{t \ge s_{h(j)}} X_t$ such that the net  $(y_j)_{j \in J}$ in $X$ converges to $y$.
\item There exist a directed set $I$, a monotone final map $h \colon I \to S$, and
\begin{equation*}
	(y_i)_{i \in I} \in \textstyle\prod_{i \in I} \bigcup_{t \in S, t \ge h(i)} X_t
\end{equation*}
such that the net $(y_i)_{i \in I}$ in $X$ converges to $y$.
\item There exist a directed set $I$, a subnet $(X_{s_i})_{i \in I}$ of $(X_s)_{s \in S}$, and $(y_i)_{i \in I} \in \prod_{i \in I} X_{s_i}$ such that the net $(y_i)_{i \in I}$ in $X$ converges to $y$.
\end{enumerate}
\end{theorem}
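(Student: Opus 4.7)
The plan is to establish the cycle (a) $\Rightarrow$ (b) $\Rightarrow$ (c) $\Rightarrow$ (d) $\Rightarrow$ (a). Among these, (b) $\Rightarrow$ (c) is immediate by applying (b) to the ``trivial subnet'' $I := S$ with $h := \mathrm{id}_S$; while (d) $\Rightarrow$ (a) and (c) $\Rightarrow$ (d) are routine bookkeeping with the definition of subnet. The genuine work is in (a) $\Rightarrow$ (b), which is where the information $y \in L(X_s)_{s \in S}$ must be converted into a concrete net compatible with an arbitrarily prescribed subnet, much as in the usual proof that a cluster point of a net admits a convergent subnet (Theorem~\ref{thm:cluster pt and convergent subnet}).

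For (a) $\Rightarrow$ (b), I would fix an arbitrary subnet $(X_{s_i})_{i \in I}$ of $(X_s)_{s \in S}$, determined by some map $I \ni i \mapsto s_i \in S$ with $s_i \to \bd(S)$. Since $y \in L(X_s)_{s \in S} \subset \cls \bigcup_{t \ge s_i} X_t$ for every $i \in I$, it follows that $U \cap \bigcup_{t \ge s_i} X_t \ne \emptyset$ for every $U \in \mathcal{N}_y$. Form the product directed set $J := I \times \mathcal{N}_y$ and, by the axiom of choice, select $y_{(i,U)} \in U \cap \bigcup_{t \ge s_i} X_t$ for each $(i,U) \in J$. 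Let $h \colon J \to I$ be the first projection $h(i,U) := i$. Then $h$ is monotone with cofinal image (hence monotone final), $y_{(i,U)} \in \bigcup_{t \ge s_{h(i,U)}} X_t$ by construction, and $(y_{(i,U)})_{(i,U) \in J}$ converges to $y$: given $U_0 \in \mathcal{N}_y$, fix any $i_0 \in I$; then for $(i, U) \ge (i_0, U_0)$ in $J$ one has $U \subset U_0$, so $y_{(i,U)} \in U \subset U_0$.

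For (c) $\Rightarrow$ (d), given the data of (c), for each $i \in I$ I would choose $s_i \in S$ with $s_i \ge h(i)$ and $y_i \in X_{s_i}$, which is possible since $y_i \in \bigcup_{t \ge h(i)} X_t$. Monotone finality of $h$ forces $s_i \to \bd(S)$, so $(X_{s_i})_{i \in I}$ is a subnet of $(X_s)_{s \in S}$ (in the sense of Definition~\ref{dfn:subnet}) with the prescribed selections $y_i \in X_{s_i}$ converging to $y$. For (d) $\Rightarrow$ (a), given any $s \in S$ and $U \in \mathcal{N}_y$, eventually $s_i \ge s$ (by the subnet condition) and eventually $y_i \in U$ (by convergence), so for sufficiently large $i$, $y_i \in U \cap X_{s_i} \subset U \cap \bigcup_{t \ge s} X_t$; this exhibits the required nonemptyness for every $s$ and $U$, yielding $y \in L(X_s)_{s \in S}$ through the characterization recalled before Lemma~\ref{lem:closure and net}.

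The only step that could be called an obstacle is (a) $\Rightarrow$ (b), and the obstacle is essentially packaging: one must simultaneously respect the externally given subnet index $I$ and the neighborhood filter $\mathcal{N}_y$, which is exactly what the product index $J = I \times \mathcal{N}_y$ is designed to do. All other implications unwind the definitions of subnet, limit set, and convergence.
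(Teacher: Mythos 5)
Your proof is correct and follows essentially the same route as the paper: the cycle (a)$\Rightarrow$(b)$\Rightarrow$(c)$\Rightarrow$(d)$\Rightarrow$(a), with the key step (a)$\Rightarrow$(b) handled via the product directed set $I \times \mathcal{N}_y$, a choice of points $y_{(i,U)} \in U \cap \bigcup_{t \ge s_i} X_t$, and the first projection as the monotone final map. The only cosmetic differences are that the paper writes $J$ as the (in fact full) subset of $I \times \mathcal{N}_y$ with nonempty intersections and invokes Lemma~\ref{lem:closure and net} in (d)$\Rightarrow$(a), where you use the neighborhood characterization of the closure directly.
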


\begin{proof}
(a) $\Rightarrow$ (b):
Let $I = (I, \le)$ be a directed set and $(X_{s_i})_{i \in I}$ be a subnet of $(X_s)_{s \in S}$.
We consider the subset $J$ of the product directed set $I \times \mathcal{N}_y$ given by
\begin{equation*}
	J := \Set{(i, U) \in I \times \mathcal{N}_y}{\textstyle\bigcup_{t \ge s_i} X_t \cap U \ne \emptyset}.
\end{equation*}
By the definition of $L(X_s)_{s \in S}$, $J$ is directed.
We can choose a net $(y_{i, U})_{(i, U) \in J}$ so that
\begin{equation*}
	y_{i, U} \in \bigcup_{t \ge s_i} X_t \cap U
\end{equation*}
for all $(i, U) \in J$.
By defining a monotone final map $h \colon J \to I$ by
\begin{equation*}
	h(i, U) = i \mspace{20mu} ((i, U) \in J),
\end{equation*}
it holds that $y_{i, U} \in \bigcup_{t \ge s_{h(i, U)}} X_t$ for all $(i, U) \in J$ and $(y_{i, U})_{(i, U) \in J}$ converges to $y$.

(b) $\Rightarrow$ (c):
This is obvious.

(c) $\Rightarrow$ (d):
For each $i \in I$, there exists $s_i \in S$ such that $s_i \ge h(i)$ and $y_i \in X_{s_i}$.
Since $s_i \to \bd(S)$ as $i \to \bd(I)$, (d) holds.

(d) $\Rightarrow$ (a):
Let $s \in S$ be fixed.
Then there is $i_0 \in I$ such that for all $i \in I$, $i \ge i_0$ implies $s_i \ge s$.
Let $I_0 := \Set{i \in I}{i \ge i_0}$ be a directed subset of $I$.
Since $(y_i)_{i \in I_0}$ is a net in $\bigcup_{t \in S, t \ge s} X_t$ converging to $y$, we have
\begin{equation*}
	y \in \cls \bigcup_{t \in S, t \ge s} X_t
\end{equation*}
from Lemma~\ref{lem:closure and net}.
This shows $y \in L(X_s)_{s \in S}$.
\end{proof}

We now compare the limit set introduced in Definition~\ref{dfn:limit set} with the upper and lower limit of sequences of subsets of (pseudo-) metric spaces.

\begin{notation}
Let $d$ be a pseudo-metric on $X$.
For each $x \in X$ and each subset $A \subset X$, let
\begin{equation*}
	d(x, A) := \inf_{y \in A} d(x, y).
\end{equation*}
We interpret that $d(x, \emptyset)$ is equal to $\infty$.
Then $d(x, A) < \infty$ if and only if $A$ is nonempty.
For each nonempty subsets $A, B \subset X$, let
\begin{equation*}
	d(A; B) := \sup_{x \in A} \inf_{y \in B} d(x, y) = \sup_{x \in A} d(x, B)
\end{equation*}
We promise that $d(A; \emptyset) = \infty$ for any nonempty subset $A$ and $d(\emptyset; B) = 0$ for any nonempty subset $B$.
We note that $d(\emptyset; \emptyset)$ is not defined.
\end{notation}

\begin{remark}
Let $X = (X, d)$ be a pseudo-metric space.
For each subset $A \subset X$, the function $d(\argdot, A) \colon X \to \R$ satisfies the following properties:
\begin{itemize}
\item $d(x, A) = 0$ if and only if $x \in \cls(A)$.
\item For all $x, y \in X$, $|d(x, A) - d(y, A)| \le d(x, y)$.
\end{itemize}
In particular, the function $d(\argdot, A) \colon X \to \R$ is continuous.
\end{remark}

\begin{definition}[ref.\ \cite{Aubin--Frankowska 2009}]
Let $X = (X, d)$ be a pseudo-metric space and $(X_n)_{n = 1}^\infty$ be a sequence of subsets of $X$.
The \textit{upper limit} and the \textit{lower limit} are defined by
\begin{align*}
	\Limsup_{n \to \infty} X_n &= \Set{y \in X}{\liminf_{n \to \infty} d(y, X_n) = 0}, \\
	\Liminf_{n \to \infty} X_n &= \Set{y \in X}{\lim_{n \to \infty} d(y, X_n) = 0},
\end{align*}
respectively.
Here $\liminf_{n \to \infty} d(y, X_n) := \sup_{m \ge 1} \inf_{n \ge m} d(y, X_n)$.
\end{definition}

In this paper, a \textit{subsequence} of some sequence $(x_n)_{n = 1}^\infty$ means a subnet $(x_{n_k})_{k = 1}^\infty$ (see \cite{Kelley 1955}).
The following are characterizations of the upper limit and the lower limit of a given sequence of subsets of pseudo-metric spaces.

\begin{lemma}[cf.\ \cite{Aubin--Frankowska 2009}]\label{lem:upper limit}
Let $X = (X, d)$ be a pseudo-metric space, $(X_n)_{n = 1}^\infty$ be a sequence of subsets of $X$, and $y \in X$ be given.
Then the following properties are euivalent:
\begin{enumerate}[label=\textup{(\alph*)}]
\item $y \in \Limsup_{n \to \infty} X_n$.
\item There exist a subsequence $(X_{n_k})_{k = 1}^\infty$ of $(X_n)_{n = 1}^\infty$ and $(y_k)_{k = 1}^\infty \in \prod_{k = 1}^\infty X_{n_k}$ such that the sequence $(y_k)_{k = 1}^\infty$ in $X$ converges to $y$.
\end{enumerate}
\end{lemma}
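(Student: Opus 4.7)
The plan is to verify the equivalence directly by unfolding the definition of $\liminf_{n\to\infty} d(y,X_n)$ as $\sup_{m\ge 1}\inf_{n\ge m} d(y,X_n)$. Since this is the supremum of a nondecreasing sequence of nonnegative numbers, it equals $0$ if and only if $\inf_{n\ge m} d(y,X_n)=0$ holds for every $m\ge 1$. This elementary reformulation is the only real observation needed; the rest is a standard inductive construction of a subsequence, mirroring the usual metric-space proof.

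For (a) $\Rightarrow$ (b), I would build a strictly increasing sequence of indices $n_1<n_2<\cdots$ together with points $y_k\in X_{n_k}$ by induction. Assuming $n_1,\dots,n_{k-1}$ have been chosen, I invoke $\inf_{n\ge n_{k-1}+1} d(y,X_n)=0$ to pick some $n_k\ge n_{k-1}+1$ with $d(y,X_{n_k})<1/k$. Because $d(y,\emptyset)=\infty$ by the stated convention, this automatically forces $X_{n_k}\ne\emptyset$, so one may select $y_k\in X_{n_k}$ with $d(y,y_k)<1/k$. The resulting sequence $(y_k)_{k=1}^\infty$ then converges to $y$ in the pseudo-metric $d$, and $(X_{n_k})_{k=1}^\infty$ is a subsequence of $(X_n)_{n=1}^\infty$ in the sense of Definition~\ref{dfn:subnet} with the monotone final index map $k\mapsto n_k$.

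For (b) $\Rightarrow$ (a), suppose $(X_{n_k})$ is a subsequence and $y_k\in X_{n_k}$ with $y_k\to y$. Fix $m\ge 1$ and $\epsilon>0$, and choose $K$ so large that $n_K\ge m$ and $d(y,y_K)<\epsilon$; then
\begin{equation*}
    \inf_{n\ge m} d(y,X_n) \;\le\; d(y,X_{n_K}) \;\le\; d(y,y_K) \;<\; \epsilon.
\end{equation*}
Letting $\epsilon\downarrow 0$ gives $\inf_{n\ge m} d(y,X_n)=0$ for every $m$, so the supremum over $m$ is $0$ and $y\in\Limsup_{n\to\infty} X_n$.

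I do not anticipate any serious obstacle. The only point that needs mild care is ensuring the indices $n_k$ are strictly increasing so that $(X_{n_k})$ is genuinely a subsequence; this is handled by demanding $n_k\ge n_{k-1}+1$ at each inductive step. Nonemptiness of the chosen $X_{n_k}$ comes for free from the convention $d(y,\emptyset)=\infty$, so no separate case analysis is required.
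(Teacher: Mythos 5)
Your proposal is correct and follows essentially the same argument as the paper: extract points $y_k \in X_{n_k}$ with $d(y,y_k) < 1/k$ for the forward direction and use the chain $\inf_{n \ge m} d(y,X_n) \le d(y,X_{n_K}) \le d(y,y_K) < \ep$ for the converse. The only cosmetic difference is your inductive insistence on strictly increasing indices; since the paper's notion of subsequence is the subnet one, simply taking $n_k \ge k$ as the paper does already suffices.
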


\begin{proof}
(a) $\Rightarrow$ (b):
By definition,
\begin{equation*}
	\inf_{n \ge k} d(y, X_n) = 0
\end{equation*}
holds for all $k \ge 1$.
Therefore, for each given $k \ge 1$, there are an integer $n_k \ge k$ and $y_k \in X_{n_k}$ such that
\begin{equation*}
	d(y, y_k) < \frac{1}{k}.
\end{equation*}
This shows that (b) holds.

(b) $\Rightarrow$ (a):
Let $\ep > 0$ be given.
Then there is an integer $k_0 \ge 1$ such that for all $k \ge k_0$, $d(y, y_k) < \ep$ holds.
Let $m \ge 1$ be a fixed integer.
We can choose an integer $k \ge k_0$ so that $n_k \ge m$.
Since
\begin{equation*}
	\inf_{n \ge m} d(y, X_n) \le d(y, X_{n_k}) \le d(y, y_k) < \ep,
\end{equation*}
we have
\begin{equation*}
	\liminf_{n \to \infty} d(y, X_n) = \sup_{m \ge 1} \inf_{n \ge m} d(y, X_n) \le \ep.
\end{equation*}
This holds for arbitrary $\ep > 0$, and therefore, we have $\liminf_{n \to \infty} d(y, X_n) = 0$.
\end{proof}

\begin{lemma}[cf.\ \cite{Aubin--Frankowska 2009}]\label{lem:lower limit}
Let $X = (X, d)$ be a pseudo-metric space, $(X_n)_{n = 1}^\infty$ be a sequence of subsets of $X$, and $y \in X$ be given.
Then the following properties are equivalent:
\begin{enumerate}[label=\textup{(\alph*)}]
\item $y \in \Liminf_{n \to \infty} X_n$.
\item There exist an integer $n_0 \ge 1$ and $(x_n)_{n = n_0}^\infty \in \prod_{n = n_0}^\infty X_n$ such that the sequence $(x_n)_{n = n_0}^\infty$ in $X$ converges to $y$.
\end{enumerate}
\end{lemma}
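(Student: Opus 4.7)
The plan is to mirror the argument for the upper limit (Lemma~\ref{lem:upper limit}), adapted to the stronger condition that $d(y, X_n) \to 0$ (rather than merely having liminf zero), which yields an honest tail sequence rather than just a subsequence. The two directions are essentially symmetric pickings, and the only subtlety is bookkeeping about the possible emptiness of $X_n$ for small $n$.

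For (a) $\Rightarrow$ (b): from $\lim_{n \to \infty} d(y, X_n) = 0$, I would first observe that $d(y, X_n) < \infty$ holds for all sufficiently large $n$, hence $X_n$ is nonempty for all $n \ge n_0$ for some $n_0 \ge 1$. For each such $n$, by the definition of the infimum I would choose $x_n \in X_n$ with
\begin{equation*}
	d(y, x_n) < d(y, X_n) + \frac{1}{n}.
\end{equation*}
The right-hand side tends to $0$, so $d(y, x_n) \to 0$, which (by the standard characterization of convergence in a pseudo-metric space via distance to a point) means that $(x_n)_{n = n_0}^\infty$ converges to $y$.

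For (b) $\Rightarrow$ (a): given $n_0$ and $(x_n)_{n = n_0}^\infty$ with $x_n \in X_n$ and $x_n \to y$, for every $n \ge n_0$ the bound $d(y, X_n) \le d(y, x_n)$ holds by definition of $d(y, X_n)$. Since $d(y, x_n) \to 0$, I conclude $\lim_{n \to \infty} d(y, X_n) = 0$, i.e., $y \in \Liminf_{n \to \infty} X_n$.

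There is no real obstacle; the mildly delicate point is simply that the convention $d(y, \emptyset) = \infty$ forces us to start the tail sequence from some $n_0$ rather than from $n = 1$, which is why the conclusion in (b) is phrased with an integer $n_0 \ge 1$ and not with a subsequence as in Lemma~\ref{lem:upper limit}.
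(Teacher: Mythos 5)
Your proposal is correct and coincides with the paper's own argument: the same choice of $x_n \in X_n$ with $d(y, x_n) < d(y, X_n) + 1/n$ for $n \ge n_0$ in one direction, and the bound $d(y, X_n) \le d(y, x_n)$ in the other, including the same handling of possibly empty $X_n$ via the convention $d(y, \emptyset) = \infty$.
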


\begin{proof}
(a) $\Rightarrow$ (b):
Since $d(y, X_n) \to 0$ as $n \to \infty$, there is $n_0 \ge 1$ such that $d(y, X_n) < \infty$ for all $n \ge n_0$.
We can choose a sequence $(x_n)_{n = n_0}^\infty$ in $X$ so that
\begin{equation*}
	x_n \in X_n \amd d(y, x_n) < d(y, X_n) + \frac{1}{n}
\end{equation*}
hold for all $n \ge n_0$.
Then the sequence $(x_n)_{n = n_0}^\infty$ converges to $y$ because $d(y, X_n) \to 0$ as $n \to \infty$.

(b) $\Rightarrow$ (a):
This is obvious because $d(y, X_n) \le d(y, x_n)$ holds for all $n \ge n_0$.
\end{proof}

\begin{remark}
In Lemma~\ref{lem:lower limit}, we can choose $n_0 = 1$ when each $X_n$ is nonempty.
In \cite{Aubin--Frankowska 2009}, this non-emptiness is implicitly assumed, where it is stated that $y \in \Limsup_{n \to \infty} X_n$ if and only if $y$ is a cluster point of some sequence $(x_n)_{n = 1}^\infty$ belonging to $\prod_{n = 1}^\infty X_n$.
\end{remark}

By combining Theorem~\ref{thm:characterization of limit set} and Lemma~\ref{lem:upper limit}, the following statement is obtained as a corollary.
The proof can be omitted.
We note that it is also mentioned in \cite{Aubin--Frankowska 2009}.

\begin{corollary}
Let $X = (X, d)$ be a pseudo-metric space and $(X_n)_{n = 1}^\infty$ be a sequence of subsets of $X$.
Then $L(X_n)_{n = 1}^\infty = \Limsup_{n \to \infty} X_n$ holds.
\end{corollary}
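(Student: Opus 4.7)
The plan is to derive both inclusions from Theorem~\ref{thm:characterization of limit set}, using condition (d) as the bridge, and Lemma~\ref{lem:upper limit}, working with the directed set $S = \{1, 2, \dots\}$ equipped with its usual order.

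For the inclusion $\Limsup_{n \to \infty} X_n \subset L(X_n)_{n = 1}^\infty$, take $y \in \Limsup_{n \to \infty} X_n$. Lemma~\ref{lem:upper limit} furnishes a subsequence $(X_{n_k})_{k = 1}^\infty$ and $(y_k)_{k = 1}^\infty \in \prod_{k=1}^\infty X_{n_k}$ converging to $y$. Since the index map $k \mapsto n_k$ is strictly increasing, it satisfies $n_k \to \bd(S)$ as $k \to \bd(\mathbb{N})$, so $(X_{n_k})_{k=1}^\infty$ is a subnet of $(X_n)_{n=1}^\infty$ in the sense of Definition~\ref{dfn:subnet}. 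Condition (d) of Theorem~\ref{thm:characterization of limit set} is therefore met, and $y \in L(X_n)_{n = 1}^\infty$.

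For the reverse inclusion $L(X_n)_{n = 1}^\infty \subset \Limsup_{n \to \infty} X_n$, take $y \in L(X_n)_{n = 1}^\infty$. Theorem~\ref{thm:characterization of limit set} produces a directed set $I$, a subnet $(X_{s_i})_{i \in I}$, and $(y_i)_{i \in I} \in \prod_i X_{s_i}$ with $y_i \to y$. I would then verify $\liminf_{n \to \infty} d(y, X_n) = 0$ directly: given $\ep > 0$ and $m \ge 1$, the convergence $y_i \to y$ (which in the pseudo-metric setting reads as $d(y, y_i) < \ep$ eventually) combined with $s_i \to \bd(S)$ (yielding $s_i \ge m$ eventually) allows us to pick a single $i$ with $d(y, y_i) < \ep$ and $s_i \ge m$, whence $\inf_{n \ge m} d(y, X_n) \le d(y, X_{s_i}) \le d(y, y_i) < \ep$. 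Taking $\sup$ over $m$ and letting $\ep \to 0$ yields $\liminf_{n \to \infty} d(y, X_n) = 0$, i.e., $y \in \Limsup_{n \to \infty} X_n$.

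There is no genuine obstacle here; the only subtlety is the small bookkeeping point that a subsequence in the classical sense genuinely qualifies as a subnet under Definition~\ref{dfn:subnet}, which the paper has already committed to (it is stated explicitly in the paragraph preceding Lemma~\ref{lem:upper limit}). This is presumably why the author remarks that the proof can be omitted.
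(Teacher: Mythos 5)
Your proof is correct and follows exactly the route the paper intends (the paper omits the proof, saying only that it follows by combining Theorem~\ref{thm:characterization of limit set} and Lemma~\ref{lem:upper limit}): the inclusion $\Limsup_{n \to \infty} X_n \subset L(X_n)_{n=1}^\infty$ is the Lemma's condition (b) fed into the Theorem's condition (d), and the reverse inclusion is the Theorem's condition (d) together with the net version of the Lemma's (b) $\Rightarrow$ (a) computation, which you carry out correctly. The only cosmetic point is that under the paper's convention a subsequence is by definition a subnet (the index map need not be strictly increasing, only final), so your ``strictly increasing'' remark is unnecessary rather than wrong.
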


\subsection{Convergence from above and from below}

To introduce convergence concepts for nets of subsets of $X$, we recall the definitions of upper and lower semicontinuity of set-valued maps.

\begin{definition}[ref.\ \cite{Aubin--Cellina 1984}]\label{dfn:set-valued map}
Let $A$ and $B$ be sets.
A map $F \colon A \to 2^B$ is called a \textit{set-valued map} from $A$ to $B$.
Here $2^B$ denotes the set of all subsets of $B$.
The set-valued map $F$ is also denoted by $F \colon A \tto B$.
For each subset $A_0 \subset A$, let $F(A_0) := \bigcup_{a \in A_0} F(a)$.
\end{definition}

\subsubsection{Upper semicontinuity and convergence from above}

\begin{definition}[ref.\ \cite{Aubin--Cellina 1984}]\label{dfn:upper semicontinuity}
Let $Y$ be a topological space, $F \colon X \tto Y$ be a set-valued map, and $x \in X$ be given.
$F$ is said to be \textit{upper semicontinuous} at $x$ if for every neighborhood $U$ of $F(x)$, there exists $V \in \mathcal{N}_x$ such that $F(V) \subset U$ holds.
\end{definition}

\begin{remark}
When $x$ is an isolated point of $X$ (i.e., $\{x\}$ is an open set of $X$), the set-valued map $F$ is always upper semicontinuous at $x$.
\end{remark}

Based on this definition, we introduce the following.

\begin{definition}\label{dfn:convergence from above of a net of subsets}
Let $S = (S, \le)$ be a directed set, $(X_s)_{s \in S}$ be a net of subsets of $X$, and $A \subset X$ be a subset.
We say that $(X_s)_{s \in S}$ \textit{converges from above} to $A$ if for every neighborhood $U$ of $A$, there exists $s \in S$ such that $\bigcup_{t \in S, t \ge s} X_t \subset U$ holds.
\end{definition}

\begin{remark}
Let $Y$ be a topological space, $F \colon X \tto Y$ be a set-valued map, and $x \in X$ be given.
For every $V_1, V_2 \in \mathcal{N}_x$, $V_1 \le V_2$ implies $F(V_1) \supset F(V_2)$.
Therefore, $F$ is upper semicontinuous at $x$ if and only if the net $(F(V))_{V \in \mathcal{N}_x}$ of subsets converges from above to $F(x)$.
\end{remark}

In the rest of this subsubsection, we study the property of the convergence from above in pseudo-metric spaces.

\begin{notation}
Let $d$ be a pseudo-metric on $X$.
For each $x \in X$, each subset $A \subset X$, and each $r > 0$, let
\begin{equation*}
	B_d(x; r) := \Set{y \in X}{d(x, y) < r}, \mspace{15mu} B_d(A; r) := \bigcup_{x \in A} B_d(x; r).
\end{equation*}
\end{notation}

\begin{remark}
Let $X = (X, d)$ be a pseudo-metric space, $S = (S, \le)$ be a directed set, and $(X_s)_{s \in S}$ be a net of subsets of $X$.
Then the following properties are equivalent:
\begin{itemize}
\item For every $\ep > 0$, there exists $s_0 \in S$ such that for all $s \ge s_0$, $X_s \subset B_d(A; \ep)$ holds.
\item $d(X_s; A) \to 0$ as $s \to \bd(S)$.
\end{itemize}
\end{remark}

The following is a key lemma to study the property of the convergence from above in pseudo-metric spaces.
We give a proof for the sake of completeness although its statement is mentioned in \cite[page 45 and page 66]{Aubin--Cellina 1984}.

\begin{lemma}\label{lem:nbd of cpt in pseudo-metric sp}
Suppose that $X = (X, d)$ is a pseudo-metric space.
Let $K \subset X$ be a nonempty compact set.
Then for every neighborhood $U$ of $K$, there exists $\delta > 0$ such that $B_d(K; \delta) \subset U$ holds.
\end{lemma}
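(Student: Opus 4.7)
The plan is to invoke compactness of $K$ via a finite-subcover argument. First I would unpack the hypothesis that $U$ is a neighborhood of $K$, which means $K \subset \operatorname{int}(U)$; since the $d$-open balls form a base for the topology of $X$, for each $x \in K$ there is a radius $r_x > 0$ with $B_d(x; r_x) \subset U$.

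Next I would cover $K$ by the halved balls $\{B_d(x; r_x/2)\}_{x \in K}$ and extract a finite subcover using compactness, giving points $x_1, \dots, x_n \in K$ with associated radii $r_1, \dots, r_n$. Setting $\delta := \min_{1 \le i \le n} r_i/2$, which is positive as the minimum over a finite nonempty set of positive numbers, I would verify $B_d(K;\delta) \subset U$ by a single application of the triangle inequality: any $y \in B_d(K;\delta)$ satisfies $d(x,y) < \delta$ for some $x \in K$, which in turn lies in some $B_d(x_i; r_i/2)$, so $d(y, x_i) < \delta + r_i/2 \le r_i$ and hence $y \in B_d(x_i; r_i) \subset U$.

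The only point worth flagging is that $d$ is merely a pseudo-metric, not a metric, but this is a non-issue because the argument uses only the triangle inequality for $d$ and the fact that the $d$-balls form a base for the topology of $X$, both of which remain valid without the separation axiom $d(x,y) = 0 \Rightarrow x = y$. An alternative route would be to consider the $1$-Lipschitz (hence continuous) function $g(x) := d(x, X \setminus U)$, observe $g > 0$ on $K$ since $K \subset \operatorname{int}(U)$ guarantees each $x \in K$ admits a $d$-ball around it disjoint from $X \setminus U$, and take $\delta$ to be its minimum on $K$; I would prefer the covering argument because it avoids separately handling the degenerate case $U = X$ and does not force $g$ to attain a minimum at some specific point of $K$.
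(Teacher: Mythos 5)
Your proof is correct, and it takes a genuinely different route from the paper. You argue via a finite subcover: cover $K$ by balls $B_d(x; r_x/2)$ with $B_d(x; r_x) \subset U$, extract $x_1, \dots, x_n$, set $\delta = \min_i r_i/2$, and conclude by one triangle inequality; this is a Lebesgue-number-style argument that uses nothing beyond the fact that $d$-balls form a base and the triangle inequality, so it is immune to the pseudo-metric issue, and, as you note, it never has to worry about the degenerate case $U = X$. The paper instead reduces to $U$ open, considers the continuous function $r(x) = d(x, X \setminus U)$, observes $r > 0$ on $K$ because $X \setminus U$ is closed (using $d(x,A) = 0 \iff x \in \cls(A)$), and applies the extreme value theorem to obtain $\delta := \inf_{x \in K} r(x) > 0$, for which $B_d(K; \delta) \subset \bigcup_{x \in K} B_d(x; r(x)) \subset U$. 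The paper's argument is shorter and exhibits an explicit ``uniform margin'' function, but it leans on the extreme value theorem and tacitly needs the convention $d(x, \emptyset) = \infty$ (or a separate remark) when $U = X$; your covering argument is more elementary and self-contained at the cost of the usual radius-halving bookkeeping. Both are complete proofs of the lemma.
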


\begin{proof}
We only have to consider the case that $U$ is open.
Let $r \colon X \to \R$ be the continuous function defined by
\begin{equation*}
	r(x) = d(x, X \setminus U) \mspace{20mu} (x \in X).
\end{equation*}
Then for every $x \in X$, $d(x, y) < r(x)$ implies $y \in U$.
Furthermore, $r(x) > 0$ for all $x \in K$ since $X \setminus U$ is closed.
The extreme value theorem ensures the existence of $x_0 \in K$ satisfying
\begin{equation*}
	r_0 := r(x_0) = \inf_{x \in K} r(x).
\end{equation*}
This implies $r_0 > 0$, and we have
\begin{equation*}
	B_d(K; r_0) \subset \bigcup_{x \in K} B_d(x; r(x)) \subset U.
\end{equation*}
This shows the conclusion.
\end{proof}

We can obtain the following corollary by using Lemma~\ref{lem:nbd of cpt in pseudo-metric sp}.

\begin{corollary}\label{cor:convergence from above in pseudo-metric sp}
Suppose that $X = (X, d)$ is a pseudo-metric space.
Let $S = (S, \le)$ be a directed set, $(X_s)_{s \in S}$ be a net of subsets of $X$, and $K \subset X$ be a nonempty compact set.
Then the following properties are equivalent:
\begin{enumerate}[label=\textup{(\alph*)}]
\item $(X_s)_{s \in S}$ converges from above to $K$.
\item $\lim_{s \to \bd(S)} d(X_s; K) = 0$, i.e., the net $(d(X_s; K))_{s \in S}$ of nonnegative real numbers converges to $0$.
\end{enumerate}
\end{corollary}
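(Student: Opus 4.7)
The plan is to prove the two implications by direct unpacking of the definitions, with the one subtle step being the reverse direction, where the compactness of $K$ is exploited via Lemma~\ref{lem:nbd of cpt in pseudo-metric sp}. The key identity underlying both directions is
\begin{equation*}
	B_d(K; r) = \Set{x \in X}{d(x, K) < r},
\end{equation*}
which follows immediately from the definitions $d(x,K) = \inf_{y \in K} d(x,y)$ and $B_d(K;r) = \bigcup_{y \in K} B_d(y;r)$.

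For (a) $\Rightarrow$ (b), I would let $\ep > 0$ be arbitrary. Since $B_d(K;\ep)$ is an open neighborhood of $K$, the convergence from above yields some $s_0 \in S$ with $\bigcup_{t \ge s_0} X_t \subset B_d(K;\ep)$. Then for every $s \ge s_0$ and every $x \in X_s$ one has $d(x,K) < \ep$, so $d(X_s;K) = \sup_{x \in X_s} d(x,K) \le \ep$; the empty-set case is absorbed by the convention $d(\emptyset;K)=0$ (valid since $K$ is nonempty). This shows $d(X_s;K) \to 0$ as $s \to \bd(S)$.

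For (b) $\Rightarrow$ (a), I would take any neighborhood $U$ of $K$. Here the hypothesis that $K$ is compact is essential: Lemma~\ref{lem:nbd of cpt in pseudo-metric sp} furnishes a $\delta > 0$ with $B_d(K;\delta) \subset U$. Applying (b) with $\ep = \delta$, there exists $s_0 \in S$ such that $d(X_s;K) < \delta$ for all $s \ge s_0$; by the identity above, this means $X_s \subset B_d(K;\delta) \subset U$ for all $s \ge s_0$, hence $\bigcup_{t \ge s_0} X_t \subset U$, as required.

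The main (and really only) obstacle is the use of compactness in the reverse direction: without it, an arbitrary neighborhood $U$ of $K$ need not contain a uniform $\delta$-enlargement $B_d(K;\delta)$, and the implication (b) $\Rightarrow$ (a) would fail. Everything else is a routine translation between the neighborhood formulation of convergence from above and the numerical condition $d(X_s;K) \to 0$.
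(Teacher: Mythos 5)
Your proof is correct and follows exactly the route the paper intends: the paper gives no explicit argument but states that the corollary is obtained from Lemma~\ref{lem:nbd of cpt in pseudo-metric sp}, which is precisely how you handle the nontrivial implication (b) $\Rightarrow$ (a), while (a) $\Rightarrow$ (b) is the routine direction via the neighborhood $B_d(K;\ep)$ and the convention $d(\emptyset;K)=0$.
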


When the above property (b) holds, we will say that $(X_s)_{s \in S}$ \textit{converges from above to $A$ with respect to $d$}.

\begin{remark}[ref.\ \cite{Hale 1988}]
Let $X = (X, d)$ be a metric space, $S = \R^+$ or $\Z^+$, and $\varPhi \colon S \times X \to X$ be a semiflow.
A nonempty set $A \subset X$ is said to \textit{attract} a subset $E \subset X$ under $\varPhi$ if the net $(\varPhi(\{s\} \times E))_{s \in S}$ converges from above to $A$ with respect to $d$.
\end{remark}

\subsubsection{Lower semicontinuity and convergence from below}

\begin{definition}[cf.\ \cite{Aubin--Cellina 1984, Aubin--Frankowska 2009}]\label{dfn:lower semicontinuity}
Let $Y$ be a topological space, $F \colon X \tto Y$ be a set-valued map, and $x \in X$ be given.
$F$ is said to be \textit{lower semicontinuous} at $x$ if for every $y \in F(x)$ and every $U \in \mathcal{N}_y$, there exists $V \in \mathcal{N}_x$ such that for all $x' \in V$, $F(x') \cap U \ne \emptyset$ holds.
We interpret that $F$ is lower semicontinuous at $x$ when $F(x)$ is empty.
\end{definition}

\begin{remark}
In \cite[Definition 2 in Chapter 1]{Aubin--Cellina 1984} or \cite[Definition 1.4.2]{Aubin--Frankowska 2009}, it is assumed that $F(x)$ is always nonempty for every $x \in X$, or $F$ is restricted to the \textit{domain of definition} defined by
\begin{equation*}
	\Set{x \in X}{F(x) \ne \emptyset}.
\end{equation*}
\end{remark}

\begin{remark}
In the same way as the upper semicontinuity, the set-valued map $F$ is always lower semicontinuous at $x$ when $x$ is an isolated point of $X$.
\end{remark}

Based on this definition, we introduce the following convergence concept.

\begin{definition}\label{dfn:convergence from below of a net of subsets}
Let $S = (S, \le)$ be a directed set, $(X_s)_{s \in S}$ be a net of subsets of $X$, and $A \subset X$ be a given subset.
We say that $(X_s)_{s \in S}$ \textit{converges from below} to $A$ if for every $y \in A$ and every $U \in \mathcal{N}_y$, there exists $s_0 \in S$ such that for all $s \in S$, $s \ge s_0$ implies $X_s \cap U \ne \emptyset$.
We also say that $(X_s)_{s \in S}$ \textit{converges} to $A$ if $(X_s)_{s \in S}$ converges from above and from below to $A$.
\end{definition}

By definition, when $(X_s)_{s \in S}$ is single-valued, i.e., $X_s = \{x_s\}$ holds for all $s \in S$ for some net $(x_s)_{s \in S}$ in $X$, the convergence from below of $(X_s)_{s \in S}$ to some subset $A$ implies the convergence from above of $(X_s)_{s \in S}$ to $A$.

\begin{notation}
Suppose that $X = (X, d)$ is a pseudo-metric space.
Let $x \in X$ be given.
We consider the binary relation $\le$ on $X \setminus \{x\}$ defined as follows: $x_1 \le x_2$ if $d(x, x_1) \ge d(x, x_2)$.
Then $\le$ becomes a preorder.
Furthermore, if $x$ is not an isolated point of $X$, then $X \setminus \{x\} = (X \setminus \{x\}, \le)$ becomes a directed set.
\end{notation}

\begin{remark}
Suppose that $X = (X, d)$ is a pseudo-metric space.
Let $Y$ be a topological space, $F \colon X \tto Y$ be a set-valued map, and $x \in X$ be not an isolated point.
For any $x' \in X \setminus \{x\}$, $x' \ge x_0$ for some $x_0 \in X \setminus \{x\}$ is equivalent to the property that $x'$ belongs to the closed ball $\bar{B}_d(x; d(x; x_0)) := \Set{y \in X}{d(x, y) \le d(x, x_0)}$.
Therefore, $F$ is lower semicontinuous at $x$ if and only if the net $(F(x'))_{x' \in X \setminus \{x\}}$ of subsets converges from below to $F(x)$.
\end{remark}

The following is a generalization of the characterization of the continuity of maps between topological spaces in terms of nets of points.

\begin{theorem}[cf.\ \cite{Aubin--Cellina 1984}]\label{thm:characterization of lower semicontinuity}
Let $Y$ be a topological space, $F \colon X \tto Y$ be a set-valued map, and $x \in X$.
Then the following properties are equivalent:
\begin{enumerate}[label=\textup{(\alph*)}]
\item $F$ is lower semicontinuous at $x$.
\item For every directed set $A = (A, \le)$ and every net $(x_\alpha)_{\alpha \in A}$ in $X$ converging to $x$, the net $(F(x_\alpha))_{\alpha \in A}$ of subsets converges from below to $F(x)$.
\item For every $y \in F(x)$, every directed set $A = (A, \le)$, and every net $(x_\alpha)_{\alpha \in A}$ in $X$ converging to $x$, there exists a subnet $(x_{\alpha_\beta})_{\beta \in B}$ for some directed set $B = (B, \le)$ such that $(y_\beta)_{\beta \in B} \in \prod_{\beta \in B} F(x_{\alpha_\beta})$ and the net $(y_\beta)_{\beta \in B}$ in $X$ converges to $y$.
\end{enumerate}
\end{theorem}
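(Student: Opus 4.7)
The plan is to prove the implications \textup{(a)} $\Rightarrow$ \textup{(b)} $\Rightarrow$ \textup{(c)} $\Rightarrow$ \textup{(a)} in this cyclic order, since \textup{(a)} $\Rightarrow$ \textup{(b)} is nearly immediate and \textup{(c)} $\Rightarrow$ \textup{(a)} follows from a clean contrapositive argument using a canonical witnessing net.

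For \textup{(a)} $\Rightarrow$ \textup{(b)}, I would fix a net $(x_\alpha)_{\alpha \in A}$ converging to $x$, a point $y \in F(x)$, and $U \in \mathcal{N}_y$. The lower semicontinuity at $x$ yields $V \in \mathcal{N}_x$ with $F(x') \cap U \ne \emptyset$ for all $x' \in V$. Convergence $x_\alpha \to x$ gives $\alpha_0$ with $x_\alpha \in V$ for all $\alpha \ge \alpha_0$, hence $F(x_\alpha) \cap U \ne \emptyset$ for $\alpha \ge \alpha_0$, which is the required convergence from below (the case $F(x) = \emptyset$ being vacuous by Definition~\ref{dfn:convergence from below of a net of subsets}).

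The main work is in \textup{(b)} $\Rightarrow$ \textup{(c)}, which is a diagonal/index-pair construction similar to the one in the proof of Theorem~\ref{thm:characterization of limit set}. Given $y \in F(x)$ and a net $(x_\alpha)_{\alpha \in A}$ converging to $x$, I would define
\begin{equation*}
	B := \Set{(\alpha, U) \in A \times \mathcal{N}_y}{F(x_\alpha) \cap U \ne \emptyset}
\end{equation*}
as a subset of the product directed set. Using \textup{(b)} applied to intersections $U_1 \cap U_2$ together with an upper bound for $\alpha_1, \alpha_2$ in $A$, I verify that $B$ is directed. The map $h \colon B \to A$, $h(\alpha, U) = \alpha$, satisfies $h(\alpha, U) \to \bd(A)$ as $(\alpha, U) \to \bd(B)$: given $\alpha' \in A$, fix any $U_0 \in \mathcal{N}_y$, use \textup{(b)} to find $\alpha^*$ with $(\alpha, U_0) \in B$ for all $\alpha \ge \alpha^*$, and take $(\alpha_0, U_0) \in B$ with $\alpha_0 \ge \alpha', \alpha^*$. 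A choice function picks $y_{(\alpha, U)} \in F(x_\alpha) \cap U$ for each $(\alpha, U) \in B$, and the construction forces $y_{(\alpha, U)} \to y$ because any $U_0 \in \mathcal{N}_y$ is dominated by $U$ once $(\alpha, U) \ge (\alpha_0, U_0)$.

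Finally, for \textup{(c)} $\Rightarrow$ \textup{(a)} I argue by contrapositive: assume $F$ fails to be lower semicontinuous at $x$, so some $y \in F(x)$ and $U \in \mathcal{N}_y$ produce, for every $V \in \mathcal{N}_x$, a point $x_V \in V$ with $F(x_V) \cap U = \emptyset$. The resulting net $(x_V)_{V \in \mathcal{N}_x}$ converges to $x$, but for any subnet $(x_{V_\beta})_{\beta \in B}$ and any selection $y_\beta \in F(x_{V_\beta})$ we have $y_\beta \notin U$ for every $\beta$, so $(y_\beta)$ cannot converge to $y$, contradicting \textup{(c)}. The main obstacle I anticipate is the careful verification, in \textup{(b)} $\Rightarrow$ \textup{(c)}, that $B$ is directed and that $h$ is final in the sense of the paper's (non-monotone) notion of subnet from Definition~\ref{dfn:subnet}; everything else is bookkeeping.
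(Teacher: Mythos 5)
Your proposal is correct and follows essentially the same route as the paper's proof: (a) $\Rightarrow$ (b) directly from the definition, (b) $\Rightarrow$ (c) via the directed index set $B \subset A \times \mathcal{N}_y$ with a choice function and the projection map $h(\alpha,U)=\alpha$, and (c) $\Rightarrow$ (a) by contradiction using the canonical net $(x_V)_{V \in \mathcal{N}_x}$. The only difference is cosmetic: you spell out the finality of $h$ and conclude the last step by noting the selections lie outside $U$, where the paper invokes Lemma~\ref{lem:closure and net}.
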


\begin{proof}
We only have to consider the case that $F(x)$ is nonempty.

(a) $\Rightarrow$ (b):
Let $A = (A, \le)$ be a directed set and $(x_\alpha)_{\alpha \in A}$ be a net in $X$ converging to $x$.
Let $y \in F(x)$ and $U \in \mathcal{N}_y$ be given.
Since $F$ is lower semicontinuous at $x$, there is $V \in \mathcal{N}_x$ such that for all $x' \in V$, $F(x') \cap U \ne \emptyset$ holds.
Then there is $\alpha_0 \in A$ such that for all $\alpha \in A$, $\alpha \ge \alpha_0$ implies $x_\alpha \in V$, which implies $F(x_\alpha) \cap U \ne \emptyset$ for such $\alpha$.
Therefore, (b) holds.

(b) $\Rightarrow$ (c):
Let $y \in F(x)$, $A = (A, \le)$ be a directed set, and $(x_\alpha)_{\alpha \in A}$ be a net in $X$ converging to $x$.
Let $B$ be the subset of the product directed set $A \times \mathcal{N}_y$ given by
\begin{equation*}
	B := \Set{(\alpha, U) \in A \times \mathcal{N}_y}{F(x_\alpha) \cap U \ne \emptyset}.
\end{equation*}
By the assumption, $B$ is nonempty and directed.
Then we can choose a net $(y_{\alpha, U})_{(\alpha, U) \in B}$ in $X$ so that
\begin{equation*}
	y_{\alpha, U} \in F(x_\alpha) \cap U
\end{equation*}
for all $(\alpha, U) \in B$.
By this choice, the net $(y_{\alpha, U})_{(\alpha, U) \in B}$ converges to $y$.
We consider the monotone final map $h \colon B \to A$ defined by
\begin{equation*}
	h(\alpha, U) = \alpha \mspace{20mu} ((\alpha, U) \in B).
\end{equation*}
Then we have $y_{\alpha, U} \in F(x_{h(\alpha, U)})$ for every $(\alpha, U) \in B$.
Therefore, (c) holds.

(c) $\Rightarrow$ (a):
We suppose the contrary and derive a contradiction.
Then there are $y \in F(x)$ and an open neighborhood $U$ of $x$ with the following property: For every $V \in \mathcal{N}_x$, there is $x_V \in V$ such that $F(x_V) \cap U = \emptyset$.
Since the net $(x_V)_{V \in \mathcal{N}_x}$ converges to $x$, there is a subnet $(x_{V_\beta})_{\beta \in B}$ for some directed set $B$ such that $(y_\beta)_{\beta \in B} \in \prod_{\beta \in B} F(x_{V_\beta})$ and the net $(y_\beta)_{\beta \in B}$ converges to $y$.
This $(y_\beta)_{\beta \in B}$ is a net in $X \setminus U$ because
\begin{equation*}
	F(x_V) \cap U = \emptyset
\end{equation*}
for all $V \in \mathcal{N}_x$.
Therefore, we have $y \in X \setminus U$ from Lemma~\ref{lem:closure and net}, which is a contradiction.
Thus, (a) holds.
\end{proof}

\begin{remark}
In \cite{Aubin--Cellina 1984}, it is stated that $F$ is lower semicontinuous at $x$ if and only if for any $y \in F(x)$, any directed set $A$, and any net $(x_\alpha)_{\alpha \in A}$ in $X$ converging to $x$, there exists $(y_\alpha)_{\alpha \in A} \in \prod_{\alpha \in A} F(x_\alpha)$ such that the net $(y_\alpha)_{\alpha \in A}$ converges to $y$.
\end{remark}

The following theorem gives a characterization of the property of the convergence from below.
The proof is similar to that of Theorem~\ref{thm:characterization of lower semicontinuity}, but there is a slight difference.
We give a proof for the sake of completeness.

\begin{theorem}\label{thm:characterization of convergence from below}
Let $S = (S, \le)$ be a directed set, $(X_s)_{s \in S}$ be a net of subsets of $X$, and $A \subset X$ be a subset.
Then the following properties are equivalent:
\begin{enumerate}[label=\textup{(\alph*)}]
\item $(X_s)_{s \in S}$ converges from below to $A$.
\item The following statement holds for each $y \in A$: For every directed set $I$ and every subnet $(X_{s_i})_{i \in I}$ of $(X_s)_{s \in S}$, there exist a directed set $J$, monotone final map $h \colon J \to I$, and $(y_j)_{j \in J} \in \prod_{j \in J} X_{s_{h(j)}}$ such that the net $(y_j)_{j \in J}$ in $X$ converges to $y$.
\item The following statement holds for each $y \in A$: For every directed set $I$ and every subnet $(X_{s_i})_{i \in I}$ of $(X_s)_{s \in S}$, there exist a directed set $J$, a subnet $(s_{i_j})_{j \in J}$ of $(s_i)_{i \in I}$, and $(y_j)_{j \in J} \in \prod_{j \in J} X_{s_{i_j}}$ such that the net $(y_j)_{j \in J}$ in $X$ converges to $y$.
\end{enumerate}
\end{theorem}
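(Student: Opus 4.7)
The plan is to mirror the proof of Theorem~\ref{thm:characterization of lower semicontinuity}, treating convergence from below as the net-of-subsets analog of lower semicontinuity; the implications (a) $\Rightarrow$ (b) and (c) $\Rightarrow$ (a) carry the content, while (b) $\Rightarrow$ (c) is essentially immediate from the definition of a subnet.

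For (a) $\Rightarrow$ (b), I would fix $y \in A$ together with a subnet $(X_{s_i})_{i \in I}$ of $(X_s)_{s \in S}$ and, in the spirit of the proof of Theorem~\ref{thm:characterization of limit set}, introduce
\begin{equation*}
J := \Set{(i, U) \in I \times \mathcal{N}_y}{X_{s_i} \cap U \ne \emptyset},
\end{equation*}
set $h(i, U) := i$, and pick some $y_{i, U} \in X_{s_i} \cap U$ for each $(i, U) \in J$. Both the directedness of $J$ and the finality of $h$ reduce to one observation: given $(i_1, U_1), (i_2, U_2) \in J$ (or given $i_0 \in I$), set $U_3 := U_1 \cap U_2 \in \mathcal{N}_y$; property (a) supplies $s_0 \in S$ with $X_s \cap U_3 \ne \emptyset$ for all $s \ge s_0$, and since $s_i \to \bd(S)$ as $i \to \bd(I)$ there exists $i_3 \in I$ with $i_3 \ge i_1, i_2$ (respectively $i_3 \ge i_0$) and $s_{i_3} \ge s_0$, whence $(i_3, U_3) \in J$. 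Convergence of $(y_{i, U})_{(i, U) \in J}$ to $y$ is then immediate from the construction.

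For (b) $\Rightarrow$ (c) it suffices to set $i_j := h(j)$: since $h$ is monotone and final, $h(j) \to \bd(I)$ as $j \to \bd(J)$, so $(s_{h(j)})_{j \in J}$ is a subnet of $(s_i)_{i \in I}$ and the given $(y_j)_{j \in J} \in \prod_{j \in J} X_{s_{h(j)}}$ already witnesses (c). For (c) $\Rightarrow$ (a) I would argue by contradiction: if (a) fails at some $y \in A$, then there is an open neighborhood $U$ of $y$ making $I := \Set{s \in S}{X_s \cap U = \emptyset}$ cofinal in $S$, so the inclusion $I \hookrightarrow S$ exhibits $(X_s)_{s \in I}$ as a subnet of $(X_s)_{s \in S}$; applying (c) to this subnet produces a selection $(y_j)_{j \in J}$ lying entirely in the closed set $X \setminus U$ and converging to $y$, and Lemma~\ref{lem:closure and net} then forces $y \in X \setminus U$, contradicting $y \in U$.

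The main obstacle is the simultaneous bookkeeping in (a) $\Rightarrow$ (b): the directedness of $J$ and the finality of $h$ must be obtained in a single argument that combines the convergence-from-below hypothesis on the original net $(X_s)_{s \in S}$ with the finality of the index map of the given subnet $(X_{s_i})_{i \in I}$. This is precisely the point that distinguishes the present theorem from Theorem~\ref{thm:characterization of lower semicontinuity}: the predicate defining $J$ must involve $X_{s_i}$ rather than $X_s$, reflecting the fact that here the net $(X_s)_{s \in S}$ itself plays the role taken by the value $F(x)$ in the earlier result.
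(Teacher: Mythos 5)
Your proposal is correct and follows essentially the same route as the paper: the same set $J \subset I \times \mathcal{N}_y$, projection $h(i,U)=i$, and selection for (a) $\Rightarrow$ (b), and the same contradiction via Lemma~\ref{lem:closure and net} for (c) $\Rightarrow$ (a). The only cosmetic difference is that in (c) $\Rightarrow$ (a) you realize the ``frequently empty intersection'' indices as a cofinal subset of $S$ with the inclusion map, whereas the paper uses a choice function $i \mapsto s_i \ge i$; both yield a subnet in the paper's sense.
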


\begin{proof}
We only have to consider the case that $A$ is nonempty.

(a) $\Rightarrow$ (b):
Let $y \in A$, $I = (I, \le)$ be a directed set, and $(X_{s_i})_{i \in I}$ be a subnet of $(X_s)_{s \in S}$.
We consider the subset $J$ of the product directed set $I \times \mathcal{N}_y$ given by
\begin{equation*}
	J := \Set{(i, U) \in I \times \mathcal{N}_y}{X_{s_i} \cap U \ne \emptyset}.
\end{equation*}

\textbf{Step 1.}
We claim that $J$ is a directed set.
We first show that $J$ is nonempty.
We choose some $U \in \mathcal{N}_y$.
By the assumption, there is $t_U \in S$ such that for all $s \in S$, $s \ge t_U$ implies $X_s \cap U \ne \emptyset$.
Since $s_i \to \bd(S)$ as $i \to \bd(I)$, there is $i_U \in I$ such that for all $i \in I$, $i \ge i_U$ implies $s_i \ge t_U$.
This means
\begin{equation*}
	(i, U) \in J
\end{equation*}
for all $i \ge i_U$, and therefore, $J$ is nonempty.
We next show that $J = (J, \le)$ is directed.
Let $(i_1, U_1), (i_2, U_2) \in J$ be given.
Let $U := U_1 \cap U_2 \in \mathcal{N}_y$.
By choosing an upper bound $i \in I$ of $i_1, i_2, i_U$, we obtain an upper bound $(i, U) \in J$ of $(i_1, U_1), (i_2, U_2)$.
Therefore, $J$ is directed.

\textbf{Step 2.}
We can choose a net $(y_{i, U})_{(i, U) \in J}$ so that
\begin{equation*}
	y_{i, U} \in X_{s_i} \cap U
\end{equation*}
for all $(i, U) \in J$.
By defining a monotone final map $h \colon J \to I$ by
\begin{equation*}
	h(i, U) = i \mspace{20mu} ((i, U) \in J),
\end{equation*}
it holds that $y_{i, U} \in X_{s_{h(i, U)}}$ for all $(i, U) \in J$ and $(y_{i, U})_{(i, U) \in J}$ converges to $y$.
Therefore, (b) holds.

(b) $\Rightarrow$ (c):
This is obvious.

(c) $\Rightarrow$ (a):
We suppose the contrary and derive a contradiction.
Then there are $y \in A$ and an open neighborhood $U$ of $y$ with the following property: For every $i \in S$, there is $s_i \in S$ such that
\begin{equation*}
	s_i \ge i \amd X_{s_i} \cap U = \emptyset.
\end{equation*}
Since $s_i \to \bd(S)$ as $i \to \bd(S)$, $(X_{s_i})_{i \in S}$ is a subnet of $(X_s)_{s \in S}$.
Then we can choose a directed set $J$, a subnet $(s_{i_j})_{j \in J}$ of $(s_i)$, and a net $(y_j)_{j \in J}$ so that $y_j \in X_{s_{i_j}}$ for all $j \in J$ and $(y_j)_{j \in J}$ converges to $y$.
This $(y_j)_{j \in J}$ is a net in $X \setminus U$ because
\begin{equation*}
	X_{s_i} \subset X \setminus U
\end{equation*}
holds for all $i \in S$.
Therefore, we have $y \in X \setminus U$ from Lemma~\ref{lem:closure and net}, which is a contradiction.
Thus, (a) holds.
\end{proof}

By combining Theorems~\ref{thm:characterization of limit set} and \ref{thm:characterization of convergence from below}, we obtain the following corollary.
The proof can be omitted.

\begin{corollary}\label{cor:convergence from below and limit set}
Let $S$ be a directed set, $(X_s)_{s \in S}$ be a net of subsets of $X$, and $A \subset X$ be a given subset.
If $(X_s)_{s \in S}$ converges from below to $A$, then $A \subset L(X_s)_{s \in S}$ holds.
\end{corollary}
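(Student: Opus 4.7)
The plan is to feed the output of the characterization of convergence from below (Theorem~\ref{thm:characterization of convergence from below}) directly into the characterization of the limit set (Theorem~\ref{thm:characterization of limit set}). So fix an arbitrary $y \in A$; it suffices to exhibit data satisfying condition (c) of Theorem~\ref{thm:characterization of limit set}.

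To obtain such data, I would apply Theorem~\ref{thm:characterization of convergence from below}, specifically the implication (a) $\Rightarrow$ (b), to the trivial subnet of $(X_s)_{s \in S}$, namely $I := S$ with $s_i := i$ for all $i \in I$. This produces a directed set $J$, a monotone final map $h \colon J \to S$, and a net $(y_j)_{j \in J} \in \prod_{j \in J} X_{h(j)}$ which converges to $y$.

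Next I would observe the trivial inclusion $X_{h(j)} \subset \bigcup_{t \in S,\, t \ge h(j)} X_t$, which means that
\begin{equation*}
	(y_j)_{j \in J} \in \textstyle\prod_{j \in J} \bigcup_{t \in S,\, t \ge h(j)} X_t.
\end{equation*}
Together with the monotone final map $h \colon J \to S$ and the convergence $y_j \to y$, this is exactly the data required by condition (c) of Theorem~\ref{thm:characterization of limit set}. Invoking the implication (c) $\Rightarrow$ (a) of that theorem yields $y \in L(X_s)_{s \in S}$, and since $y \in A$ was arbitrary, the desired inclusion $A \subset L(X_s)_{s \in S}$ follows.

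There is no real obstacle: the corollary is essentially a bookkeeping observation that the nets produced by Theorem~\ref{thm:characterization of convergence from below} are a special case of those accepted by Theorem~\ref{thm:characterization of limit set}. The only point to be careful about is the trivial case $A = \emptyset$, which is covered vacuously, so we may restrict to nonempty $A$ as in the proof of Theorem~\ref{thm:characterization of convergence from below} itself.
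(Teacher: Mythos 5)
Your proof is correct and is exactly the combination the paper has in mind: the paper omits the proof, stating only that the corollary follows by combining Theorem~\ref{thm:characterization of limit set} and Theorem~\ref{thm:characterization of convergence from below}, and your argument (apply (a)~$\Rightarrow$~(b) of the latter to the trivial subnet, then feed the resulting net into condition (c) of the former via $X_{h(j)} \subset \bigcup_{t \ge h(j)} X_t$) is the natural way to carry that out. No gaps; the handling of the empty case is also fine.
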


In the rest of this subsubsection, we study the property of the convergence from below in pseudo-metric spaces.

\begin{remark}
Let $X = (X, d)$ be a pseudo-metric space, $S = (S, \le)$ be a directed set, $(X_s)_{s \in S}$ be a net of subsets of $X$, and $y \in X$ be given.
Then the following properties are equivalent:
\begin{itemize}
\item For every $\ep > 0$, there exists $s_0 \in S$ such that for all $s \ge s_0$, $X_s \cap B_d(y; \ep)$ holds.
\item $\lim_{s \to \bd(S)} d(y, X_s) = 0$.
\end{itemize}
\end{remark}

In view of the above remark, $d(A; X_s) \to 0$ as $s \to \bd(S)$ is sufficient for the convergence from below of $(X_s)_{s \in S}$ to $A$.
As the following theorem shows, the condition is also necessary when $A$ is compact.

\begin{theorem}
Suppose that $X = (X, d)$ is a pseudo-metric space.
Let $S = (S, \le)$ be a directed set, $(X_s)_{s \in S}$ be a net of subsets of $X$, and $K \subset X$ be a nonempty compact set.
Then $(X_s)_{s \in S}$ converges from below to $K$ if and only if $\lim_{s \to \bd(S)} d(K; X_s) = 0$.
\end{theorem}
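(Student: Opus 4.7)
The plan is to prove the two implications separately. The easy direction is ($\Leftarrow$), and the substantive one is ($\Rightarrow$), which reduces to a standard finite-subcover argument exploiting compactness of $K$ together with the triangle-type inequality $|d(y, A) - d(z, A)| \le d(y, z)$ already recorded in the preceding remark.

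For ($\Leftarrow$), suppose $\lim_{s \to \bd(S)} d(K; X_s) = 0$. Fix $y \in K$ and $U \in \mathcal{N}_y$. In a pseudo-metric space, $U$ contains some $B_d(y; \ep)$. Choose $s_0 \in S$ so that $d(K; X_s) < \ep$ for all $s \ge s_0$. For such $s$, the estimate $d(y, X_s) \le d(K; X_s) < \ep$ forces $X_s$ to be nonempty and to meet $B_d(y; \ep) \subset U$, giving convergence from below.

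For ($\Rightarrow$), suppose $(X_s)_{s \in S}$ converges from below to $K$, and let $\ep > 0$. For each $y \in K$, the definition applied to $U = B_d(y; \ep/2)$ produces $s_y \in S$ such that $X_s \cap B_d(y; \ep/2) \ne \emptyset$, i.e.\ $d(y, X_s) < \ep/2$, whenever $s \ge s_y$. The family $\{B_d(y; \ep/2)\}_{y \in K}$ covers $K$, so by compactness there are $y_1, \dots, y_n \in K$ with $K \subset \bigcup_{i=1}^n B_d(y_i; \ep/2)$. Using directedness of $S$, pick an upper bound $s_0$ of $s_{y_1}, \dots, s_{y_n}$. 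For any $s \ge s_0$ and any $y \in K$, choose $i$ with $d(y, y_i) < \ep/2$; then the Lipschitz property of $d(\argdot, X_s)$ yields
\begin{equation*}
    d(y, X_s) \le d(y, y_i) + d(y_i, X_s) < \tfrac{\ep}{2} + \tfrac{\ep}{2} = \ep,
\end{equation*}
so that $d(K; X_s) = \sup_{y \in K} d(y, X_s) \le \ep$. Since $\ep > 0$ was arbitrary, $d(K; X_s) \to 0$ as $s \to \bd(S)$.

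No step is particularly delicate; the only point requiring mild attention is that the argument implicitly guarantees $X_s \ne \emptyset$ for $s \ge s_0$ (so that $d(y_i, X_s) < \infty$ and the triangle inequality is meaningful), which follows automatically from the choice of $s_{y_1}$ and the conventions on $d(x, \emptyset) = \infty$ and $d(A; \emptyset) = \infty$. The compactness of $K$ is used in exactly one essential place, namely to pass from a pointwise condition (convergence from below, which is a condition for each $y \in K$ separately) to a uniform condition (the supremum $d(K; X_s)$), and this is the only nontrivial ingredient of the proof.
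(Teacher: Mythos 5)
Your proof is correct, and it is essentially the argument the paper has in mind: the paper omits the proof, deferring to the corresponding statement for lower semicontinuous set-valued maps in Aubin--Cellina, whose proof is exactly your finite-subcover-of-$K$-by-$\ep/2$-balls argument combined with the $1$-Lipschitz property of $d(\argdot, X_s)$. Your attention to the non-emptiness of $X_s$ (so that the triangle inequality is meaningful under the conventions $d(x, \emptyset) = \infty$, $d(A; \emptyset) = \infty$) is exactly the right point of care.
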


The proof is similar to that of the following corresponding theorem about the lower semicontinuity of set-valued maps.
Therefore, it can be omitted.

\begin{theorem}[ref.\ \cite{Aubin--Cellina 1984}]
Let $Y = (Y, \rho)$ be a pseudo-metric space, $F \colon X \tto Y$ be a set-valued map, and $x \in X$ be given.
Suppose that $F(x)$ is compact.
Then $F$ is lower semicontinuous at $x$ if and only if $\lim_{x' \to x} \rho(F(x); F(x')) = 0$ holds.
\end{theorem}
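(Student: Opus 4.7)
The plan is to prove the two implications separately. I will think of $\rho(F(x); F(x')) = \sup_{y \in F(x)} \rho(y, F(x'))$ as the ``pointwise-in-$y$'' quantity $\rho(y, F(x'))$ rendered uniform in $y \in F(x)$. The compactness of $F(x)$ will be used precisely to pass from pointwise to uniform smallness.

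For the sufficiency direction, assume $\lim_{x' \to x} \rho(F(x); F(x')) = 0$. Fix $y \in F(x)$ and $\ep > 0$. Since $\rho(y, F(x')) \le \rho(F(x); F(x'))$, there exists $V \in \mathcal{N}_x$ such that $\rho(y, F(x')) < \ep$ for every $x' \in V$. In particular $F(x') \ne \emptyset$ and $F(x') \cap B_\rho(y; \ep) \ne \emptyset$ for such $x'$, so $F$ is lower semicontinuous at $x$ by Definition~\ref{dfn:lower semicontinuity} (balls of the form $B_\rho(y; \ep)$ form a neighborhood base of $y$).

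For the necessity direction, which is the main step, assume $F$ is lower semicontinuous at $x$ and fix $\ep > 0$. For each $y \in F(x)$, lower semicontinuity applied to the neighborhood $B_\rho(y; \ep/2)$ yields $V_y \in \mathcal{N}_x$ with $F(x') \cap B_\rho(y; \ep/2) \ne \emptyset$ for every $x' \in V_y$; equivalently, $\rho(y, F(x')) < \ep/2$ for every $x' \in V_y$. The family $\{B_\rho(y; \ep/2)\}_{y \in F(x)}$ is an open cover of the compact set $F(x)$, so there exist finitely many $y_1, \ldots, y_n \in F(x)$ with $F(x) \subset \bigcup_{i = 1}^n B_\rho(y_i; \ep/2)$. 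Setting $V := V_{y_1} \cap \cdots \cap V_{y_n} \in \mathcal{N}_x$, for any $x' \in V$ and any $y \in F(x)$ one picks $i$ with $\rho(y, y_i) < \ep/2$ and estimates, via the triangle inequality for $\rho(\argdot, F(x'))$,
\[
\rho(y, F(x')) \le \rho(y, y_i) + \rho(y_i, F(x')) < \ep.
\]
Taking the supremum over $y \in F(x)$ gives $\rho(F(x); F(x')) \le \ep$ for all $x' \in V$, which is the required conclusion.

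The crux of the argument, and the only place where compactness enters, is the finite subcover step that collapses the a priori $F(x)$-indexed family $\{V_y\}$ into a single neighborhood $V$. Without compactness of $F(x)$ the intersection $\bigcap_{y \in F(x)} V_y$ need not be a neighborhood of $x$, and the uniform estimate genuinely can fail, so this is where one must be careful; everything else is a routine triangle-inequality bookkeeping for the Hausdorff-type ``excess'' $\rho(A; B)$.
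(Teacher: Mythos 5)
Your proof is correct, and it is essentially the standard argument: the paper itself gives no proof of this statement but refers to Aubin--Cellina (Proposition 3 in Chapter 1), whose proof is exactly your compactness/finite-subcover scheme, with the easy direction following from $\rho(y, F(x')) \le \rho(F(x); F(x'))$ and the fact that balls $B_\rho(y;\ep)$ form a neighborhood base. The only detail worth noting is the implicit non-emptiness bookkeeping (e.g.\ $F(x') \ne \emptyset$ for $x' \in V$, which your LSC step does guarantee), which you handle adequately.
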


By $\lim_{x' \to x} \rho(F(x); F(x')) = 0$, we mean that for every $\ep > 0$, there exists $V \in \mathcal{N}_x$ such that for all $x' \in V$, $\rho(F(x); F(x')) < \ep$ holds.
See \cite[Proof of Proposition 3 in Chapter 1]{Aubin--Cellina 1984} for the proof.

\subsection{Convergence and separation axioms}

In this subsection, we will investigate the connection between the convergence property and limit sets under the separation axioms.

\subsubsection{Separation axioms and compactness}

For the purpose stated above, we recall the separation axioms for topological spaces.

\begin{definition}[ref.\ \cite{Kelley 1955}]\label{dfn:Hausdorff sp}
$X$ is said to be \textit{Hausdorff} if for every $x, y \in X$ satisfying $x \ne y$, there exist $U \in \mathcal{N}_x$ and $V \in \mathcal{N}_y$ such that $U \cap V = \emptyset$.
\end{definition}

\begin{theorem}[ref.\ \cite{Kelley 1955}]\label{thm:separation of cpt and pt in Hausdorff sp}
Suppose that $X$ is Hausdorff and let $K \subset X$ be a compact set.
Then for every $y \in X \setminus K$, there are neighborhood $U$ of $K$ and a neighborhood $V$ of $y$ such that $U \cap V = \emptyset$.
\end{theorem}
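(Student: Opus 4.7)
The plan is to run the standard ``compactness plus Hausdorff separation'' argument: attach to each point $x \in K$ a Hausdorff-separating pair of open neighborhoods for $x$ and for $y$, cover $K$ by the $x$-side pieces, extract a finite subcover by compactness, and take a union on the $K$-side together with the corresponding finite intersection on the $y$-side.

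In detail, I would fix $y \in X \setminus K$ and, for each $x \in K$, appeal to Definition~\ref{dfn:Hausdorff sp} applied to the pair $x \ne y$ to obtain open sets $U_x \in \mathcal{N}_x$ and $V_x \in \mathcal{N}_y$ with $U_x \cap V_x = \emptyset$. The family $\{U_x\}_{x \in K}$ is then an open cover of $K$, so by compactness of $K$ there exist finitely many points $x_1, \ldots, x_n \in K$ with $K \subset U_{x_1} \cup \cdots \cup U_{x_n}$. Setting
\begin{equation*}
    U := U_{x_1} \cup \cdots \cup U_{x_n}, \qquad V := V_{x_1} \cap \cdots \cap V_{x_n},
\end{equation*}
one checks immediately that $U$ is an open neighborhood of $K$, that $V$ is an open neighborhood of $y$ (because it is a \emph{finite} intersection of neighborhoods of $y$), and that $U \cap V = \emptyset$, since any point of $U$ lies in some $U_{x_i}$, whereas any point of $V$ lies in $V_{x_i} \subset X \setminus U_{x_i}$.

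There is no real obstacle here; the content of the proof is precisely the interplay between the \emph{arbitrary} intersection that Hausdorffness would naively force on the $y$-side and the \emph{finite} intersection that compactness of $K$ allows us to get away with. I would point out, for emphasis, that finiteness of the subcover is the only nontrivial ingredient, and that without it the intersection $\bigcap_{x \in K} V_x$ need not be a neighborhood of $y$.
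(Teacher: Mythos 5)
Your proof is correct and is exactly the standard argument (cover $K$ by the Hausdorff-separating neighborhoods of its points, pass to a finite subcover, and take the finite intersection on the $y$-side); the paper itself gives no proof, citing Kelley, where this is precisely the argument used. The only cosmetic point is that Definition~\ref{dfn:Hausdorff sp} yields neighborhoods that need not be open, so one should pass to their interiors (or to open neighborhoods contained in them) before invoking compactness, which changes nothing in the argument.
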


Here a topological space is said to be \textit{compact} if every open cover has a finite subcover (ref.\ \cite{Kelley 1955}).
The compactness can be paraphrased by using the finite intersection property.

\begin{definition}[ref.\ \cite{Kelley 1955}]
Let $\mathcal{C}$ be a collection of subsets of $X$.
$\mathcal{C}$ is said to have the \textit{finite intersection property} if $\bigcap \mathcal{C}_0 := \bigcap_{C \in \mathcal{C}_0} C$ is nonempty for every finite sub-collection $\mathcal{C}_0 \subset \mathcal{C}$.
\end{definition}

\begin{lemma}[ref.\ \cite{Kelley 1955}]\label{lem:cptness and finite intersection property}
$X$ is compact if and only if for every collection of closed sets of $X$ with the finite intersection property, the intersection is nonempty.
\end{lemma}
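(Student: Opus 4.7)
The plan is to prove the equivalence by the standard duality between open covers and collections of closed sets, using De Morgan's laws. The key observation is that a collection $\mathcal{U}$ of open sets is a cover of $X$ if and only if the collection $\{X \setminus U : U \in \mathcal{U}\}$ of closed sets has empty intersection; similarly, a finite subcollection of $\mathcal{U}$ covers $X$ if and only if the corresponding finite subcollection of complements has empty intersection.

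For the forward implication, I would assume that $X$ is compact and let $\mathcal{C}$ be a collection of closed sets of $X$ with the finite intersection property. Suppose for contradiction that $\bigcap_{C \in \mathcal{C}} C = \emptyset$. Then $\{X \setminus C : C \in \mathcal{C}\}$ is an open cover of $X$ by De Morgan. By the compactness of $X$, there exists a finite sub-collection $\mathcal{C}_0 \subset \mathcal{C}$ such that $\bigcup_{C \in \mathcal{C}_0} (X \setminus C) = X$, which is equivalent to $\bigcap_{C \in \mathcal{C}_0} C = \emptyset$. This contradicts the finite intersection property of $\mathcal{C}$.

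For the reverse implication, I would assume the closed-set condition and let $\mathcal{U}$ be an open cover of $X$. Consider the collection $\mathcal{C} := \{X \setminus U : U \in \mathcal{U}\}$ of closed sets of $X$. Since $\mathcal{U}$ covers $X$, we have $\bigcap_{C \in \mathcal{C}} C = \emptyset$ by De Morgan. By the contrapositive of the hypothesis, $\mathcal{C}$ cannot have the finite intersection property, so there exists a finite sub-collection $\{X \setminus U_1, \ldots, X \setminus U_n\} \subset \mathcal{C}$ with $\bigcap_{i=1}^n (X \setminus U_i) = \emptyset$, equivalently $\bigcup_{i=1}^n U_i = X$. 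This exhibits a finite subcover.

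There is no serious obstacle: the proof is purely formal and relies only on the definitions of compactness and the finite intersection property together with De Morgan's laws. The only point requiring mild care is ensuring the direction of the contrapositive is taken correctly in each implication, so that "no finite subcollection has empty intersection" (finite intersection property) is matched precisely against "no finite subfamily covers $X$" (failure of having a finite subcover).
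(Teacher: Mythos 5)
Your proof is correct and is the standard De Morgan duality argument; the paper itself gives no proof, simply citing Kelley, and your argument is exactly the classical one found there. Nothing further is needed.
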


Then the compactness can be characterized by the existence of a convergent subnet.

\begin{theorem}[ref.\ \cite{Kelley 1955}]\label{thm:cptness and net}
$X$ is compact if and only if every net in $X$ has a cluster point.
Consequently, $X$ is compact if and only if every net in $X$ has a convergent subnet.
\end{theorem}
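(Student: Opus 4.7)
The plan is to prove the equivalence between compactness and the existence of cluster points for every net via the finite intersection property characterization in Lemma~\ref{lem:cptness and finite intersection property}, using the description of the set of cluster points given in Remark~\ref{rmk:set of cluster pts}. The second assertion will then follow at once from Theorem~\ref{thm:cluster pt and convergent subnet}.

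For the forward direction, suppose $X$ is compact and let $(x_\alpha)_{\alpha \in A}$ be a net in $X$. I will consider the family of closed sets
\begin{equation*}
	F_\alpha := \cls \Set{x_\beta}{\beta \in A, \beta \ge \alpha} \mspace{20mu} (\alpha \in A).
\end{equation*}
The key observation is that $\{F_\alpha\}_{\alpha \in A}$ has the finite intersection property: given $\alpha_1, \ldots, \alpha_n \in A$, directedness of $A$ supplies a common upper bound $\alpha^* \in A$, and then $x_{\alpha^*} \in F_{\alpha_i}$ for each $i$. By Lemma~\ref{lem:cptness and finite intersection property}, the intersection $\bigcap_{\alpha \in A} F_\alpha$ is nonempty. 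By Remark~\ref{rmk:set of cluster pts}, this intersection is precisely the set of cluster points of $(x_\alpha)_{\alpha \in A}$.

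For the reverse direction, suppose every net in $X$ has a cluster point, and let $\mathcal{C}$ be a collection of closed sets with the finite intersection property. Let $\Lambda$ denote the set of finite sub-collections of $\mathcal{C}$, directed by inclusion. For each $\lambda \in \Lambda$, the finite intersection property lets us choose $x_\lambda \in \bigcap \lambda$, giving a net $(x_\lambda)_{\lambda \in \Lambda}$ in $X$. By hypothesis, this net has a cluster point $x \in X$. I will verify that $x \in \bigcap \mathcal{C}$: for any fixed $C \in \mathcal{C}$, if $\lambda \ge \{C\}$ in $\Lambda$, then $x_\lambda \in C$; hence $x$ belongs to $\cls \Set{x_\mu}{\mu \ge \{C\}} \subset C$, using that $C$ is closed. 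Another application of Lemma~\ref{lem:cptness and finite intersection property} completes the compactness of $X$.

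The second assertion of the theorem is immediate from the first combined with Theorem~\ref{thm:cluster pt and convergent subnet}, since the existence of a cluster point of a net is equivalent to the existence of a convergent subnet. I do not anticipate any substantive obstacle; the only mild care needed is to remember that the ``subnet'' in this paper is in the generalized sense of Definition~\ref{dfn:subnet} (not necessarily indexed by monotone final maps), but Theorem~\ref{thm:cluster pt and convergent subnet} already accommodates both conventions.
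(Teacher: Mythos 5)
Your proposal is correct, and it is essentially the standard argument of Kelley that the paper itself invokes by citation rather than proving: the forward direction via the finite intersection property of the closed tails $F_\alpha$ together with Remark~\ref{rmk:set of cluster pts}, the converse via the net indexed by finite subcollections with the finite intersection property, and the final assertion via Theorem~\ref{thm:cluster pt and convergent subnet}. No gaps beyond the usual degenerate conventions (nonemptiness of $\Lambda$, finite upper bounds by induction on pairs), which are harmless.
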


\begin{definition}[ref.\ \cite{Kelley 1955}]
$X$ is said to be \textit{regular} if for every $x \in X$ and every $U \in \mathcal{N}_x$, there exists a closed neighborhood $V$ of $x$ contained in $U$.
\end{definition}

\begin{remark}
$X$ is regular if and only if for every closed set $F$ of $X$ and every $x \not\in F$, there exist open sets $U$ and $V$ of $X$ such that $x \in U$, $F \subset V$, and $U \cap V \ne \emptyset$.
\end{remark}

\subsubsection{Convergence in Hausdorff or regular spaces}

\begin{lemma}[cf.\ \cite{Chepyzhov--Vishik 2002}]\label{lem:convergence to cpt in Hausdorff sp}
Suppose that $X$ is Hausdorff.
Let $S = (S, \le)$ be a directed set and $(X_s)_{s \in S}$ be a net of subsets of $X$.
If $(X_s)_{s \in S}$ converges from above to some compact set $K \subset X$, then $L(X_s)_{s \in S} \subset K$ holds.
Consequently, if $(X_s)_{s \in S}$ converges to $K$, then $L(X_s)_{s \in S} = K$ holds.
\end{lemma}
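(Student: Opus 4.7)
The plan is a direct argument by contradiction using the separation property of compact sets from points in Hausdorff spaces (Theorem~\ref{thm:separation of cpt and pt in Hausdorff sp}). Let $y \in L(X_s)_{s \in S}$ be arbitrary; I aim to show $y \in K$.

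Suppose for contradiction that $y \in X \setminus K$. Since $X$ is Hausdorff and $K$ is compact, Theorem~\ref{thm:separation of cpt and pt in Hausdorff sp} produces a neighborhood $U$ of $K$ and a neighborhood $V$ of $y$ with $U \cap V = \emptyset$; I may take $V$ to be open. The assumption that $(X_s)_{s \in S}$ converges from above to $K$ then yields some $s_0 \in S$ such that
\begin{equation*}
    \bigcup_{t \in S,\, t \ge s_0} X_t \subset U.
\end{equation*}
Consequently this union is disjoint from the open set $V$, so $V$ witnesses that $y \notin \cls \bigcup_{t \ge s_0} X_t$. But $y \in L(X_s)_{s \in S} \subset \cls \bigcup_{t \ge s_0} X_t$ by the definition of the limit set, a contradiction. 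Hence $y \in K$ and $L(X_s)_{s \in S} \subset K$.

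For the consequence, suppose $(X_s)_{s \in S}$ converges to $K$, i.e., it converges both from above and from below to $K$. The first part gives $L(X_s)_{s \in S} \subset K$, while Corollary~\ref{cor:convergence from below and limit set} applied to the convergence from below gives $K \subset L(X_s)_{s \in S}$. Combining the two inclusions yields $L(X_s)_{s \in S} = K$.

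There is no substantive obstacle here: the proof is essentially the translation of Theorem~\ref{thm:separation of cpt and pt in Hausdorff sp} into the language of nets of subsets. The only point that needs care is to choose $V$ open so that disjointness from the union $\bigcup_{t \ge s_0} X_t$ upgrades to the statement $y \notin \cls \bigcup_{t \ge s_0} X_t$; an alternative presentation would replace this closure step by invoking clause~(d) of Theorem~\ref{thm:characterization of limit set} to extract a net $(y_i)_{i \in I}$ with $y_i \in X_{s_i}$ converging to $y$, and observing that eventually $y_i \in U \cap V = \emptyset$, but the direct argument above is shorter.
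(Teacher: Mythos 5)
Your proof is correct and is essentially identical to the paper's: both argue by contradiction via Theorem~\ref{thm:separation of cpt and pt in Hausdorff sp} and the convergence from above to show $y \notin \cls \bigcup_{t \ge s_0} X_t$, and both deduce the equality from Corollary~\ref{cor:convergence from below and limit set}. (Your extra care in taking $V$ open is not even needed, since $y \in \cls(E)$ already means every neighborhood of $y$ meets $E$.)
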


\begin{proof}
We suppose $L(X_s)_{s \in S} \not\subset K$ and derive a contradiction.
Then we can choose $y \in X$ so that
\begin{equation*}
	y \in L(X_s)_{s \in S} \cap (X \setminus K).
\end{equation*}
From Theorem~\ref{thm:separation of cpt and pt in Hausdorff sp}, there are a neighborhood $U$ of $K$ and a neighborhood $V$ of $y$ such that $U \cap V = \emptyset$.
For this $U$, there is $s \in S$ such that $\bigcup_{t \in S, t \ge s} X_t \subset U$.
Therefore, we have
\begin{equation*}
	\bigcup_{t \in S, t \ge s} X_t \cap V \subset U \cap V = \emptyset,
\end{equation*}
which implies $y \not\in \cls \bigcup_{t \in S, t \ge s} X_t$.
This contradicts $y \in L(X_s)_{s \in S}$.
$L(X_s)_{s \in S} = K$ under the convergence of $(X_s)_{s \in S}$ to $K$ is a consequence of Corollary~\ref{cor:convergence from below and limit set}.
\end{proof}

\begin{remark}
Let $\varPhi \colon \R^+ \times X \to X$ be a semiflow and $E, A \subset X$ be subsets.
$A$ is said to \textit{attract} $E$ under $\varPhi$ if the net $(\varPhi(\{s\} \times E))_{s \in \R^+}$ of subsets of $X$ converges from above to $A$.
We note that this is not equivalent to the attraction in metric spaces.
In \cite[Proposition 2.3 in Chapter XI]{Chepyzhov--Vishik 2002}, it is proved that if $A$ is compact and attracts $E$ under $\varPhi$, then $\omega_\varPhi(E) \subset A$ holds.
\end{remark}

\begin{lemma}[cf.\ \cite{Marzocchi--ZandonellaNecca 2002}]\label{lem:convergence in regular sp}
Suppose that $X$ is regular.
Let $S = (S, \le)$ be a directed set and $(X_s)_{s \in S}$ be a net of subsets of $X$.
If $(X_s)_{s \in S}$ converges from above to some subset $A \subset X$, then $L(X_s)_{s \in S} \subset \cls(A)$ holds.
Consequently, if $(X_s)_{s \in S}$ converges to $A$, then $L(X_s)_{s \in S} = \cls(A)$ holds.
\end{lemma}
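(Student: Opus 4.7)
The plan is to imitate the proof of Lemma~\ref{lem:convergence to cpt in Hausdorff sp}, replacing the Hausdorff separation of a compact set from a point (Theorem~\ref{thm:separation of cpt and pt in Hausdorff sp}) by the regular separation of the closed set $\cls(A)$ from a point outside it. Suppose for contradiction that $y \in L(X_s)_{s \in S}$ with $y \notin \cls(A)$. Since $\cls(A)$ is closed and $y \notin \cls(A)$, the equivalent formulation of regularity recorded in the remark following the definition supplies disjoint open sets $U, V \subset X$ with $y \in U$, $\cls(A) \subset V$, and $U \cap V = \emptyset$. In particular $V$ is a neighborhood of $A$, so by the convergence from above of $(X_s)_{s \in S}$ to $A$ there exists $s \in S$ with $\bigcup_{t \in S,\, t \ge s} X_t \subset V$. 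Then
\begin{equation*}
    U \cap \bigcup_{t \in S,\, t \ge s} X_t \subset U \cap V = \emptyset,
\end{equation*}
so $y \notin \cls \bigcup_{t \in S,\, t \ge s} X_t$, contradicting $y \in L(X_s)_{s \in S}$. This yields the inclusion $L(X_s)_{s \in S} \subset \cls(A)$.

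For the ``consequently'' part, assume $(X_s)_{s \in S}$ converges to $A$, i.e., converges both from above and from below. The convergence from above gives the inclusion just established. For the reverse inclusion, convergence from below yields $A \subset L(X_s)_{s \in S}$ by Corollary~\ref{cor:convergence from below and limit set}; since $L(X_s)_{s \in S}$ is an intersection of closed sets and hence closed, taking the closure gives $\cls(A) \subset L(X_s)_{s \in S}$. Combining the two inclusions yields $L(X_s)_{s \in S} = \cls(A)$.

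The only subtlety I anticipate is the correct use of the regularity axiom: one must pass from the formulation in Definition (closed neighborhoods at a point) to the two-open-set separation of a closed set from an exterior point, which is precisely the equivalent form pointed out in the remark following the definition of regularity. Everything else is a direct transcription of the Hausdorff argument, with $K$ replaced by $\cls(A)$.
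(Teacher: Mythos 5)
Your proof is correct and is exactly the argument the paper intends: the paper omits the proof of this lemma precisely because it is the Hausdorff argument of Lemma~\ref{lem:convergence to cpt in Hausdorff sp} with the separation of a point from the closed set $\cls(A)$ supplied by regularity, and your handling of the ``consequently'' part via Corollary~\ref{cor:convergence from below and limit set} and the closedness of $L(X_s)_{s \in S}$ is also the intended one. (Note only that the remark you cite contains the typo $U \cap V \ne \emptyset$; you correctly use the disjointness $U \cap V = \emptyset$.)
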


We omit the proof because the argument is similar to the proof of Lemma~\ref{lem:convergence to cpt in Hausdorff sp}.
See also \cite[Proof of Theorem 2.10]{Marzocchi--ZandonellaNecca 2002}.

\begin{remark}
Suppose that $X$ is a regular Hausdorff space.
Let $\varPhi \colon \R^+ \times X \to X$ be a semiflow and $E, A \subset X$ be subsets.
In \cite[Theorem 2.10]{Marzocchi--ZandonellaNecca 2002}, it is proved that if $A$ attracts $E$ under $\varPhi$, then $\omega_\varPhi(E) \subset \cls(A)$ holds.
\end{remark}

\section{Asymptotic compactness and limit set compactness}\label{sec:asymptotic cptness and limit set cptness}

Throughout this section, let $X$ be a topological space.
The purpose of this section is to introduce the notions of asymptotic compactness and weak asymptotic compactness and investigate their fundamental properties.
We also study their connection with the limit set compactness and the eventual Lagrange stability introduced below.

\subsection{Asymptotic compactness}

In view of Theorem~\ref{thm:characterization of limit set}, we introduce the following notions of asymptotic compactness for nets of nonempty subsets.
They are considered to be generalizations of the asymptotic compactness of subsets under semiflows with continuous time in metric spaces (see \cite{Sell--You 2002}).
See also Definition~\ref{dfn:asymptotic seq cptness}.

\begin{definition}\label{dfn:asymptotic cptness}
Let $S$ be a directed set and $(X_s)_{s \in S}$ be a net of subsets of $X$.
We say that $(X_s)_{s \in S}$ is \textit{asymptotically compact} if for every directed set $I$, every subnet $(X_{s_i})_{i \in I}$ of $(X_s)_{s \in S}$, and every $(y_i)_{i \in I} \in \prod_{i \in I} X_{s_i}$, the net $(y_i)_{i \in I}$ in $X$ has a convergent subnet.
\end{definition}

\begin{remark}
If there exists $s_0 \in S$ such that $X_s = \emptyset$ for all $s \ge s_0$, then it holds that $(X_s)_{s \in S}$ is asymptotically compact.
\end{remark}

\begin{definition}\label{dfn:weak asymptotic cptness}
Let $S$ be a directed set and $(X_s)_{s \in S}$ be a net of subsets of $X$.
We say that $(X_s)_{s \in S}$ is \textit{weakly asymptotically compact} if for every directed set $I$, every monotone final map $h \colon I \to S$, and every $(y_i)_{i \in I} \in \prod_{i \in I} \bigcup_{t \in S, t \ge h(i)} X_t$, the net $(y_i)_{i \in I}$ in $X$ has a convergent subnet.
\end{definition}

By definition, the asymptotic compactness implies the weak asymptotic compactness.
It is not apparent whether the converse holds or not.
We note that every limit $y$ of a convergent subnet of $(y_i)_{i \in I}$ in Definitions~\ref{dfn:asymptotic cptness} and \ref{dfn:weak asymptotic cptness} necessarily belongs to $L(X_s)_{s \in S}$ from Theorem~\ref{thm:characterization of limit set}.

\begin{remark}
Suppose that $(X_s)_{s \in S}$ is weakly asymptotically compact.
Then for every $(x_s)_{s \in S} \in \prod_{s \in S} X_s$, the net $(x_s)_{s \in S}$ in $X$ has a convergent subnet.
Therefore, $L(X_s)_{s \in S}$ is nonempty if every $X_s$ is nonempty.
\end{remark}

\begin{lemma}\label{lem:weak asymptotic cptness and limit set}
Let $S = (S, \le)$ be a directed set and $(X_s)_{s \in S}$ be a net of subsets of $X$.
If $(X_s)_{s \in S}$ is weakly asymptotically compact, then $(X_s)_{s \in S}$ converges from above to $L(X_s)_{s \in S}$.
\end{lemma}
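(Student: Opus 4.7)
My plan is to argue by contradiction, building the offending net directly from the hypothetical failure of convergence from above, and then using the characterization of the limit set (Theorem~\ref{thm:characterization of limit set}) to deduce that every cluster point of such a net lies in $L(X_s)_{s \in S}$.

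Suppose, toward a contradiction, that $(X_s)_{s \in S}$ does not converge from above to $L := L(X_s)_{s \in S}$. Unfolding Definition~\ref{dfn:convergence from above of a net of subsets}, there exists an open neighborhood $U$ of $L$ with the property that for every $s \in S$ one can pick an index $t(s) \in S$ with $t(s) \ge s$ together with a point $y_s \in X_{t(s)}$ such that $y_s \notin U$. In particular, for every $s \in S$ we have $y_s \in \bigcup_{t \ge s} X_t$, which is exactly the requirement $y_s \in \bigcup_{t \in S,\, t \ge h(s)} X_t$ for the identity map $h = \mathrm{id}_S \colon S \to S$ (which is manifestly monotone and final).

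Now I invoke weak asymptotic compactness with $I = S$ and $h = \mathrm{id}_S$: the net $(y_s)_{s \in S}$ admits a convergent subnet, say $(y_{s_\alpha})_{\alpha \in A}$ with limit $y^* \in X$. By the very formulation of condition (c) of Theorem~\ref{thm:characterization of limit set}, the existence of such a subnet (arising from an element of $\prod_{i} \bigcup_{t \ge h(i)} X_t$ whose subnet converges to $y^*$) forces $y^* \in L$. Since $U$ is an open neighborhood of $L$, we have $y^* \in U$, so the subnet $(y_{s_\alpha})$ is eventually in $U$. But by construction $y_s \notin U$ for every $s \in S$, so $y_{s_\alpha} \notin U$ for all $\alpha \in A$, a contradiction.

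The argument is essentially a bookkeeping one: the only step that requires care is verifying that the net $(y_s)_{s \in S}$ and the identity map on $S$ together fit the exact hypothesis format of weak asymptotic compactness (so that its conclusion applies) and simultaneously fit condition (c) of the limit set characterization (so that the limit $y^*$ automatically lands in $L$). Both verifications are immediate once the selection $t(s) \ge s$, $y_s \in X_{t(s)}$ is made from the failure of convergence from above, so no real technical obstacle arises.
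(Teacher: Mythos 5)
Your proof is correct and follows essentially the same route as the paper's: argue by contradiction, select $y_s \in \bigcup_{t \in S,\, t \ge s} X_t \setminus U$ for each $s$, apply weak asymptotic compactness with $I = S$ and $h = \mathrm{id}_S$, and use Theorem~\ref{thm:characterization of limit set} to place the limit of the convergent subnet in $L(X_s)_{s \in S}$, contradicting the fact that all terms lie outside the neighborhood $U$ of $L(X_s)_{s \in S}$. One cosmetic remark: under Definition~\ref{dfn:subnet} the subnet's index map into $S$ need not be monotone and final, so what literally applies is condition (d) of Theorem~\ref{thm:characterization of limit set} (using your own selection $y_s \in X_{t(s)}$ with $t(s) \ge s$, which gives a subnet $(X_{t(s_\alpha)})$ with $y_{s_\alpha} \in X_{t(s_\alpha)}$) rather than condition (c); this relabeling does not affect the argument.
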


\begin{proof}
We suppose the contrary and derive a contradiction.
Then there is an open neighborhood $U$ of $L(X_s)_{s \in S}$ such that for all $s \in S$, $\bigcup_{t \in S, t \ge s} X_t \cap (X \setminus U) \ne \emptyset$ holds.
Therefore, we can choose a net $(y_s)_{s \in S}$ in $X$ so that
\begin{equation*}
	y_s \in \bigcup_{t \in S, t \ge s} X_t \cap (X \setminus U)
\end{equation*}
for all $s \in S$.
By the weak asymptotic compactness, the net $(y_s)_{s \in S}$ has a convergent subnet.
From Lemma~\ref{lem:closure and net} and Theorem~\ref{thm:characterization of limit set}, we have $L(X_s)_{s \in S} \cap (X \setminus U) \ne \emptyset$.
This is a contradiction.
\end{proof}

\begin{remark}
The following properties are equivalent from Theorem~\ref{thm:cptness and net}:
\begin{enumerate}[label=(\alph*)]
\item $X$ is compact.
\item For every directed set $S = (S, \le)$ and every net $(X_s)_{s \in S}$ of nonempty subsets of $X$, $(X_s)_{s \in S}$ is asymptotically compact.
\item For every directed set $S = (S, \le)$ and every net $(X_s)_{s \in S}$ of nonempty subsets of $X$, $(X_s)_{s \in S}$ is weakly asymptotically compact.
\end{enumerate}
\end{remark}

We next study the asymptotic compactness when $X$ is locally compact.
Here $X$ is said to be \textit{locally compact} if every point in $X$ has a compact neighborhood.
It is straightforward to show that every compact set has a compact neighborhood when $X$ is locally compact.

\begin{lemma}\label{lem:convergence of net to cpt in locally cpt sp}
Suppose that $X$ is locally compact.
Then for every directed set $A = (A, \le)$ and every net $(x_\alpha)_{\alpha \in A}$ in $X$ converging from above to some nonempty compact set $K \subset X$, $(x_\alpha)_{\alpha \in A}$ has a convergent subnet.
\end{lemma}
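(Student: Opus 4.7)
The plan is to use local compactness to confine a tail of the net inside a compact set, and then invoke the characterization of compactness via convergent subnets (Theorem~\ref{thm:cptness and net}).

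First, since $X$ is locally compact and $K$ is nonempty and compact, I would appeal to the standard fact (noted in the text just before the lemma) that $K$ has a compact neighborhood $N \subset X$. Thus there is an open set $W$ with $K \subset W \subset N$, and in particular $N$ itself is a neighborhood of $K$ in the sense of Definition~\ref{dfn:convergence from above of a net of subsets}.

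Next, I would apply the hypothesis that the net $(x_\alpha)_{\alpha \in A}$ converges from above to $K$ to the neighborhood $N$ (equivalently, to its open interior). This yields $\alpha_0 \in A$ such that $x_\alpha \in N$ for every $\alpha \ge \alpha_0$. Let $A_0 := \Set{\alpha \in A}{\alpha \ge \alpha_0}$, which is a cofinal, hence directed, subset of $A$. The net $(x_\alpha)_{\alpha \in A_0}$ is then a net in the compact space $N$.

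Finally, by Theorem~\ref{thm:cptness and net}, the net $(x_\alpha)_{\alpha \in A_0}$ has a cluster point $x \in N$, and equivalently a subnet converging to $x$. Since $A_0$ is cofinal in $A$, any subnet of $(x_\alpha)_{\alpha \in A_0}$ is automatically a subnet of the original net $(x_\alpha)_{\alpha \in A}$ (the composed index map still goes to $\bd(A)$), producing the desired convergent subnet. There is no significant obstacle here; the only subtlety is making sure that the restriction to the tail indexed by $A_0$ is itself legitimately viewed as a subnet of the original in the sense of Definition~\ref{dfn:subnet}, which follows immediately from cofinality of $A_0$ in $A$.
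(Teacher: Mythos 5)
Your proof is correct and follows essentially the same route as the paper: pick a compact neighborhood of $K$ via local compactness, use the convergence from above to trap a tail of the net in it, restrict to the cofinal tail $A_0$, and apply Theorem~\ref{thm:cptness and net}. The only difference is that you spell out why a subnet of the tail is a subnet of the original net, a point the paper's proof leaves implicit.
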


\begin{proof}
We can choose a compact neighborhood $U$ of $K$.
The assumption implies that there is $\alpha_0 \in A$ such that for all $\alpha \in A$, $\alpha \ge \alpha_0$ implies $x_\alpha \in U$.
Let $A_0 := \Set{\alpha \in A}{\alpha \ge \alpha_0}$ be a directed subset of $A$.
Then $(x_\alpha)_{\alpha \in A_0}$ has a convergent subnet by the compactness of $U$ from Theorem~\ref{thm:cptness and net}.
This shows that $(x_\alpha)_{\alpha \in A}$ also has a convergent subnet.
\end{proof}

\begin{lemma}\label{lem:convergence to cpt and asymptotic cptness in locally cpt sp}
Suppose that $X$ is locally compact.
Let $S$ be a directed set and $(X_s)_{s \in S}$ be a net of subsets of $X$.
If $(X_s)_{s \in S}$ converges from above to some compact set $K \subset X$, then $(X_s)_{s \in S}$ is asymptotically compact.
\end{lemma}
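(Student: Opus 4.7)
The plan is to reduce the assertion to Lemma \ref{lem:convergence of net to cpt in locally cpt sp} by observing that any selection net arising in the definition of asymptotic compactness inherits the convergence-from-above property of $(X_s)_{s \in S}$. First, I would dispose of the trivial case $K = \emptyset$: since $\emptyset$ is itself a neighborhood of $\emptyset$, the hypothesis that $(X_s)_{s \in S}$ converges from above to $K$ forces $\bigcup_{t \ge s_0} X_t \subset \emptyset$ for some $s_0 \in S$, hence $X_t = \emptyset$ eventually, and the remark immediately after Definition~\ref{dfn:asymptotic cptness} yields asymptotic compactness vacuously. So I may assume $K$ is nonempty and compact.

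Fix a directed set $I$, a subnet $(X_{s_i})_{i \in I}$ of $(X_s)_{s \in S}$, and a selection $(y_i)_{i \in I} \in \prod_{i \in I} X_{s_i}$. The key observation is that $(y_i)_{i \in I}$, viewed as a net of points, converges from above to $K$ in the sense of Definition~\ref{dfn:convergence from above of a net of subsets} applied to the singletons $\{y_i\}$: given any neighborhood $U$ of $K$, the hypothesis supplies $s_0 \in S$ with $\bigcup_{t \ge s_0} X_t \subset U$, and since $s_i \to \bd(S)$ as $i \to \bd(I)$ by the definition of a subnet, there is $i_0 \in I$ with $s_i \ge s_0$ for all $i \ge i_0$, whence $y_i \in X_{s_i} \subset U$ for all $i \ge i_0$. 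Lemma~\ref{lem:convergence of net to cpt in locally cpt sp} then provides a convergent subnet of $(y_i)_{i \in I}$, proving asymptotic compactness.

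There is no real obstacle here: the genuine content (producing a compact neighborhood of $K$ from local compactness and extracting a cluster point via Theorem~\ref{thm:cptness and net}) has been isolated in Lemma~\ref{lem:convergence of net to cpt in locally cpt sp}, and the only new ingredient is the routine transfer of the convergence-from-above property from the subnet $(X_{s_i})_{i \in I}$ to the pointwise selection $(y_i)_{i \in I}$, which is immediate from the finality condition $s_i \to \bd(S)$ built into the notion of a subnet.
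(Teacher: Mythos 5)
Your proposal is correct and follows essentially the same route as the paper: reduce to the point-net lemma (Lemma~\ref{lem:convergence of net to cpt in locally cpt sp}) by noting that any selection $(y_i)_{i \in I}$ from a subnet $(X_{s_i})_{i \in I}$ inherits convergence from above to $K$ via the finality of $i \mapsto s_i$. The only difference is cosmetic: the paper dismisses the degenerate cases by restricting to $K \ne \emptyset$ and $\prod_{i \in I} X_{s_i} \ne \emptyset$, while you handle $K = \emptyset$ by observing the net is eventually empty; both are fine.
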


\begin{proof}
Let $I$ be a directed set and $(X_{s_i})_{i \in I}$ be a subnet of $(X_s)_{s \in S}$.
We only have to consider the case that $K$ and $\prod_{i \in I} X_{s_i}$ are nonempty.
Let $(y_i)_{i \in I} \in \prod_{i \in I} X_{s_i}$.
Since $(X_s)_{s \in S}$ converges from above to $K$, the net $(y_i)_{i \in I}$ in $X$ converges from above to $K$.
Therefore, $(y_i)_{i \in I}$ has a convergent subnet from Lemma~\ref{lem:convergence of net to cpt in locally cpt sp}.
This shows that $(X_s)_{s \in S}$ is asymptotically compact.
\end{proof}

\subsection{Limit set compactness and eventual Lagrange stability}

In this paper, we use the following terminologies.

\begin{definition}\label{dfn:limit set cptness}
Let $S = (S, \le)$ be a directed set and $(X_s)_{s \in S}$ be a net of subsets of $X$.
We say that $(X_s)_{s \in S}$ is \textit{limit set compact} if (i) the limit set $L(X_s)_{s \in S}$ is a nonempty compact set and (ii) $(X_s)_{s \in S}$ converges from above to $L(X_s)_{s \in S}$.
\end{definition}

\begin{definition}
Let $S = (S, \le)$ be a directed set and $(X_s)_{s \in S}$ be a net of subsets of $X$.
We say that $(X_s)_{s \in S}$ is \textit{eventually Lagrange stable} if there exists $s_0 \in S$ such that $\bigcup_{t \in S, t \ge s_0} X_t$ is relatively compact, i.e., the closure is compact.
\end{definition}

\begin{remark}
The terminology of \textit{positive Lagrange stability} has been used for a flow $\varPhi \colon \R^+ \times X \to X$ in a topological space $X$ as follows (e.g., see \cite{Deysach--Sell 1965}):
The motion $\varPhi(t, x)$ is said to be \textit{positively Lagrange stable} if the positive orbit $\Set{\varPhi(t, x)}{t \in \R^+}$ is relatively compact.
\end{remark}

The following theorem is considered to be a generalization of Theorem~\ref{thm:cptness and net}.

\begin{corollary}\label{cor:cptness and limit set cptness}
The following properties are equivalent:
\begin{enumerate}[label=\textup{(\alph*)}]
\item $X$ is compact.
\item For every directed set $S = (S, \le)$ and every net $(X_s)_{s \in S}$ of nonempty subset of $X$, $(X_s)_{s \in S}$ is limit set compact.
\item For every directed set $S = (S, \le)$ and every net $(X_s)_{s \in S}$ of nonempty subset of $X$, $L(X_s)_{s \in S} \ne \emptyset$.
\end{enumerate}
\end{corollary}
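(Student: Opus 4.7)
The proof will proceed by a cyclic argument (a) $\Rightarrow$ (b) $\Rightarrow$ (c) $\Rightarrow$ (a), using the machinery already developed and with the net--cluster point characterization of compactness (Theorem~\ref{thm:cptness and net}) and Remark~\ref{rmk:set of cluster pts} as the conceptual bridge between ``nets of points'' and ``nets of singletons''.

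For (a) $\Rightarrow$ (b): let $S$ be a directed set and $(X_s)_{s \in S}$ a net of nonempty subsets of $X$. Since $X$ is compact, Theorem~\ref{thm:cptness and net} shows that any net in $X$ has a convergent subnet; in particular $(X_s)_{s \in S}$ is (weakly) asymptotically compact in the sense of Definitions~\ref{dfn:asymptotic cptness} and \ref{dfn:weak asymptotic cptness}. By Lemma~\ref{lem:weak asymptotic cptness and limit set}, $(X_s)_{s \in S}$ then converges from above to $L(X_s)_{s \in S}$. To see that $L(X_s)_{s \in S}$ is nonempty, pick any $(x_s)_{s \in S} \in \prod_{s \in S} X_s$ (possible since each $X_s$ is nonempty); by weak asymptotic compactness this net has a convergent subnet, and Theorem~\ref{thm:characterization of limit set} implies its limit lies in $L(X_s)_{s \in S}$. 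Compactness of $L(X_s)_{s \in S}$ is automatic: it is an intersection of closed subsets of the compact space $X$, hence closed in $X$, hence compact.

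The implication (b) $\Rightarrow$ (c) is immediate from Definition~\ref{dfn:limit set cptness}, which bakes in the non-emptiness of the limit set.

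For (c) $\Rightarrow$ (a), I would invoke Theorem~\ref{thm:cptness and net}: it suffices to show that every net in $X$ has a cluster point. Given a net $(x_s)_{s \in S}$ in $X$, form the net of singletons $(X_s)_{s \in S}$ with $X_s := \{x_s\}$; these are trivially nonempty. By Remark~\ref{rmk:set of cluster pts}, $L(X_s)_{s \in S}$ coincides with the set of cluster points of $(x_s)_{s \in S}$. Hypothesis (c) then yields $L(X_s)_{s \in S} \ne \emptyset$, so $(x_s)_{s \in S}$ has a cluster point. Thus $X$ is compact by Theorem~\ref{thm:cptness and net}.

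There is no real obstacle here: the corollary is essentially a packaging of Lemma~\ref{lem:weak asymptotic cptness and limit set}, Theorem~\ref{thm:characterization of limit set}, and Theorem~\ref{thm:cptness and net}, with Remark~\ref{rmk:set of cluster pts} linking ``limit sets of nets of subsets'' to ``cluster points of nets of points''. The only mildly delicate point is to articulate in (a) $\Rightarrow$ (b) that \emph{all three} requirements of limit set compactness (non-emptiness of $L$, compactness of $L$, and convergence from above to $L$) each follow from compactness of $X$ through a different one of the already-established tools.
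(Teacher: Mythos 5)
Your proof is correct, and it follows the same cyclic decomposition (a) $\Rightarrow$ (b) $\Rightarrow$ (c) $\Rightarrow$ (a) as the paper, with the same ingredients for most steps: Lemma~\ref{lem:weak asymptotic cptness and limit set} for the convergence from above to $L(X_s)_{s \in S}$, closedness of $L(X_s)_{s \in S}$ inside the compact space $X$ for its compactness, and Theorem~\ref{thm:cptness and net} for (c) $\Rightarrow$ (a), where you make explicit the singleton-net reduction via Remark~\ref{rmk:set of cluster pts} that the paper leaves implicit. The one genuine divergence is the non-emptiness of the limit set in (a) $\Rightarrow$ (b): the paper applies the finite intersection property (Lemma~\ref{lem:cptness and finite intersection property}) to the closed tails $\cls \bigcup_{t \in S, t \ge s} X_t$, which have nonempty finite intersections by directedness of $S$ and non-emptiness of the $X_s$, whereas you choose a selection net $(x_s)_{s \in S} \in \prod_{s \in S} X_s$, extract a convergent subnet from (weak) asymptotic compactness, and place its limit in $L(X_s)_{s \in S}$ via Theorem~\ref{thm:characterization of limit set}. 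Both are valid; the FIP route is a bit more economical, needing neither the selection nor the characterization theorem, while your route has the merit of showing that non-emptiness already follows from weak asymptotic compactness alone, which is precisely the observation recorded in the remark following Definition~\ref{dfn:weak asymptotic cptness}.
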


\begin{proof}
(a) $\Rightarrow$ (b):
The non-emptiness of $L(X_s)_{s \in S}$ follows by Lemma~\ref{lem:cptness and finite intersection property} because the net $(Y_s)_{s \in S}$ of subsets of $X$ given by 
\begin{equation*}
	Y_s := \cls \bigcup_{t \in S, t \ge s} X_t \mspace{20mu} (s \in S)
\end{equation*}
is a family of closed sets of $X$ with the finite intersection property.
The compactness of $L(X_s)_{s \in S}$ also follows by the compactness of $X$.
The convergence from above of $(X_s)_{s \in S}$ to $L(X_s)_{s \in S}$ is a consequence of Lemma~\ref{lem:weak asymptotic cptness and limit set}.

(b) $\Rightarrow$ (c): This is obvious.

(c) $\Rightarrow$ (a): This holds from Theorem~\ref{thm:cptness and net}.
\end{proof}

\begin{lemma}\label{lem:eventual Lagrange stability}
Let $S = (S, \le)$ be a directed set and $(X_s)_{s \in S}$ be a net of nonempty subsets of $X$.
If $(X_s)_{s \in S}$ is eventually Lagrange stable, then the following statements hold:
\begin{enumerate}[label=\textup{\arabic*.}]
\item $(X_s)_{s \in S}$ converges to some nonempty closed compact set.
\item $(X_s)_{s \in S}$ is asymptotically compact.
Furthermore, $(X_s)_{s \in S}$ is limit set compact.
\end{enumerate}
\end{lemma}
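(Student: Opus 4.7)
The plan is to extract from eventual Lagrange stability a compact envelope containing all sufficiently late $X_s$, and then deduce both items by reducing to the characterization of compactness via nets (Theorem~\ref{thm:cptness and net}) applied inside this envelope.

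First, I would fix $s_0 \in S$ witnessing the eventual Lagrange stability, so that $K := \cls \bigcup_{t \in S, t \ge s_0} X_t$ is compact. For every $s \ge s_0$, the tail $Y_s := \cls \bigcup_{t \in S, t \ge s} X_t$ is a closed subset of $K$, hence compact, and it is nonempty since each $X_t$ is. The family $(Y_s)_{s \ge s_0}$ is downward-directed by inclusion, so it possesses the finite intersection property; Lemma~\ref{lem:cptness and finite intersection property} applied inside the compact space $K$ then yields that $L(X_s)_{s \in S} = \bigcap_{s \ge s_0} Y_s$ is nonempty, closed, and compact.

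Next, for asymptotic compactness, I would take an arbitrary subnet $(X_{s_i})_{i \in I}$ and selection $(y_i)_{i \in I} \in \prod_{i \in I} X_{s_i}$, observe that $s_i \to \bd(S)$ as $i \to \bd(I)$ forces a cofinal part of $(y_i)$ to lie inside the compact set $K$, and apply Theorem~\ref{thm:cptness and net} to extract a convergent subnet. Asymptotic compactness in turn implies weak asymptotic compactness: given a monotone final $h \colon I \to S$ and $y_i \in \bigcup_{t \ge h(i)} X_t$, one may pick $s_i \ge h(i)$ with $y_i \in X_{s_i}$, and $(X_{s_i})_{i \in I}$ is then a subnet of $(X_s)_{s \in S}$ on which the asymptotic-compactness hypothesis applies. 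Consequently, Lemma~\ref{lem:weak asymptotic cptness and limit set} gives that $(X_s)_{s \in S}$ converges from above to $L(X_s)_{s \in S}$. Together with the first paragraph, this establishes the limit set compactness asserted in item~2 and, taking $L(X_s)_{s \in S}$ as the target, the convergence claim in item~1.

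The only delicate point I foresee is the reading of item~1. Eventual Lagrange stability does not in general produce convergence from below to $L(X_s)_{s \in S}$ (an alternating sequence between two isolated points of a two-point space has a two-point limit set but no such lower convergence), so item~1 must be understood as the existence of a nonempty closed compact set to which $(X_s)_{s \in S}$ converges from above, in line with the paper's recurring emphasis on convergence from above. Under this reading, both items follow directly from the construction above, with no extra argument required.
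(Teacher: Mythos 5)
Your argument for item~2 is essentially the paper's: fix $s_0$ with $K := \cls \bigcup_{t \in S,\, t \ge s_0} X_t$ compact, note that any selection $(y_i)_{i \in I}$ from a subnet eventually lies in $K$ (restrict to the tail $\{i \in I : i \ge i_0\}$), extract a convergent subnet via Theorem~\ref{thm:cptness and net}, and then invoke Lemma~\ref{lem:weak asymptotic cptness and limit set} together with $L(X_s)_{s \in S} \subset K$ to get limit set compactness. The one genuine variation is that you obtain non-emptiness and compactness of $L(X_s)_{s \in S}$ directly, by applying Lemma~\ref{lem:cptness and finite intersection property} inside $K$ to the downward-directed family $\bigl( \cls \bigcup_{t \ge s} X_t \bigr)_{s \ge s_0}$, whereas the paper gets non-emptiness from the weak asymptotic compactness (the remark following Definition~\ref{dfn:weak asymptotic cptness}) and compactness from $L(X_s)_{s \in S} \subset K$; both routes are correct, and yours has the mild advantage of not passing through the asymptotic compactness at all for that part.

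On item~1 you are right to be suspicious, and in fact you have caught a defect in the statement rather than left a gap. The paper's own proof of item~1 consists of the single sentence that $X_s \subset K$ for all $s \ge s_0$ implies that $(X_s)_{s \in S}$ converges to $K$, where ``converges'' is defined in Definition~\ref{dfn:convergence from below of a net of subsets} as convergence from above \emph{and} from below. The inclusion $X_s \subset K$ only yields the from-above half, and your two-point example (discrete $X = \{a, b\}$, $X_n$ alternating between $\{a\}$ and $\{b\}$) is a valid counterexample: the net is eventually Lagrange stable, convergence from above forces the target to be $\{a,b\}$, and convergence from below to $\{a,b\}$ fails, so there is no nonempty compact set to which the net converges in the paper's full sense. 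Thus only the weakened reading you adopt (convergence from above to a nonempty closed compact set, e.g.\ $K$ or $L(X_s)_{s \in S}$) is provable from eventual Lagrange stability, and that is exactly what your construction delivers; it is also all that is used later (e.g.\ in Lemma~\ref{lem:eventual Lagrange stability and limit set cptness in locally cpt regular sp}), so nothing downstream is affected.
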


\begin{proof}
By the eventual Lagrange stability, there is $s_0 \in S$ such that $K := \cls \bigcup_{t \in S, t \ge s_0} X_t$ is a nonempty closed compact set.

1. Since $X_s \subset K$ holds for all $s \ge s_0$, it holds that $(X_s)_{s \in S}$ converges to $K$.

2. Let $I = (I, \le)$ be a directed set and $(X_{s_i})_{i \in I}$ be a subnet of $(X_s)_{s \in S}$.
We only have to consider the case that $\prod_{i \in I} X_{s_i}$ is nonempty.
Let $(y_i)_{i \in I} \in \prod_{i \in I} X_{s_i}$.
For the above $s_0$, there is $i_0 \in I$ such that $s_i \ge s_0$ holds for all $i \ge i_0$.
Let $I_0 := \Set{i \in I}{i \ge i_0}$ be a directed subset of $I$.
Then $(y_i)_{i \in I_0}$ becomes a net in $K$.
Therefore, this net has a convergent subnet from Theorem~\ref{thm:cptness and net}.
This implies that $(y_i)_{i \in I}$ also has a convergent subnet.
Thus, $(X_s)_{s \in S}$ is asymptotically compact.
From Lemma~\ref{lem:weak asymptotic cptness and limit set}, $(X_s)_{s \in S}$ converges from above to the nonempty closed set $L(X_s)_{s \in S}$.
Furthermore, $L(X_s)_{s \in S}$ is compact because $L(X_s)_{s \in S} \subset K$.
Therefore, the limit set compactness follows.
\end{proof}

We finally study a connection between the eventual Lagrange stability and the limit set compactness in locally compact regular spaces.
The following fact is crucial.

\begin{lemma}[ref.\ \cite{Kelley 1955}]\label{lem:cptness of closure of cpt in regular sp}
Suppose that $X$ is a regular space.
Then for every compact set $A \subset X$, $\cls(A)$ is also compact.
\end{lemma}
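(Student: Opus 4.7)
The plan is to verify compactness of $\cls(A)$ directly via the open-cover definition. Let $\mathcal{U}$ be an arbitrary open cover of $\cls(A)$. Since $A \subset \cls(A)$, for each $x \in A$ we can choose $U_x \in \mathcal{U}$ with $x \in U_x$. Invoking the regularity of $X$ at the point $x$ with the neighborhood $U_x$, there exists a closed neighborhood $V_x$ of $x$ with $V_x \subset U_x$. Let $W_x$ denote the interior of $V_x$, so $W_x$ is an open neighborhood of $x$ contained in the closed set $V_x$.

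Next I would use the compactness of $A$. The family $\{W_x\}_{x \in A}$ is an open cover of $A$, hence there is a finite subcover indexed by points $x_1, \ldots, x_n \in A$, so that $A \subset W_{x_1} \cup \cdots \cup W_{x_n}$. Taking closures on both sides and using that the closure operator commutes with finite unions yields
\begin{equation*}
	\cls(A) \subset \cls(W_{x_1}) \cup \cdots \cup \cls(W_{x_n}).
\end{equation*}
Since each $V_{x_i}$ is closed and contains $W_{x_i}$, we have $\cls(W_{x_i}) \subset V_{x_i} \subset U_{x_i}$, and therefore
\begin{equation*}
	\cls(A) \subset U_{x_1} \cup \cdots \cup U_{x_n}.
\end{equation*}
Thus $\{U_{x_1}, \ldots, U_{x_n}\}$ is a finite subcover of $\mathcal{U}$, and $\cls(A)$ is compact.

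There is no substantive obstacle here; the only delicate point is that the paper does not assume $X$ is Hausdorff, so compact sets need not be closed and $\cls(A)$ may properly contain $A$. Regularity is used precisely to shrink the chosen open $U_x$ around each $x \in A$ to a closed neighborhood $V_x \subset U_x$, which is what allows the closure of the finitely many $W_{x_i}$ to remain inside the original cover.
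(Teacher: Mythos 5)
Your proof is correct, and it is the standard argument (the paper itself omits the proof, citing Kelley): use regularity to shrink each covering open set around a point of $A$ to a closed neighborhood, extract a finite subcover of $A$, and note that the closure of the finite union stays inside the chosen members of the cover. The only delicate point, which you handle correctly, is that without Hausdorffness $\cls(A)$ may properly contain $A$, and your use of the closed neighborhoods $V_x$ is exactly what bridges that gap.
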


\begin{lemma}\label{lem:eventual Lagrange stability and limit set cptness in locally cpt regular sp}
Suppose that $X$ is a locally compact regular space.
Let $S = (S, \le)$ be a directed set and $(X_s)_{s \in S}$ be a net of nonempty subsets of $X$.
Then the following properties are equivalent:
\begin{enumerate}[label=\textup{(\alph*)}]
\item $(X_s)_{s \in S}$ converges from above to some nonempty compact set.
\item $(X_s)_{s \in S}$ is eventually Lagrange stable.
\item $(X_s)_{s \in S}$ is asymptotically compact and limit set compact.
\end{enumerate}
\end{lemma}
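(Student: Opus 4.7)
The plan is to prove the cycle $\text{(a)} \Rightarrow \text{(b)} \Rightarrow \text{(c)} \Rightarrow \text{(a)}$, which exploits the already-established tools at each step. Two of the implications are short appeals to prior results, and the substantive content is concentrated in $\text{(a)} \Rightarrow \text{(b)}$.

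For $\text{(c)} \Rightarrow \text{(a)}$: this is immediate from Definition~\ref{dfn:limit set cptness}, since limit set compactness of $(X_s)_{s \in S}$ already asserts that $L(X_s)_{s \in S}$ is a nonempty compact set to which $(X_s)_{s \in S}$ converges from above. For $\text{(b)} \Rightarrow \text{(c)}$: this is exactly the conclusion of Lemma~\ref{lem:eventual Lagrange stability}, whose hypotheses (a net of nonempty subsets that is eventually Lagrange stable) coincide with ours; no separation or local compactness is needed here.

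The main step is $\text{(a)} \Rightarrow \text{(b)}$. Suppose $(X_s)_{s \in S}$ converges from above to some nonempty compact set $K \subset X$. First I would invoke local compactness: since every point of $K$ has a compact neighborhood and $K$ is compact, a finite union argument produces a compact neighborhood $U$ of $K$ (the union of finitely many compact neighborhoods whose interiors cover $K$ is again a neighborhood of $K$, and it is compact as a finite union of compact sets). Second, by convergence from above applied to the open neighborhood $\operatorname{int}(U)$ of $K$, there exists $s_0 \in S$ such that $\bigcup_{t \in S,\, t \ge s_0} X_t \subset U$. Third, I would apply Lemma~\ref{lem:cptness of closure of cpt in regular sp} using the regularity of $X$: since $U$ is compact, its closure $\cls(U)$ is compact as well. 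Therefore
\begin{equation*}
    \cls \bigcup_{t \in S,\, t \ge s_0} X_t \subset \cls(U),
\end{equation*}
and as a closed subset of a compact set it is itself compact, giving eventual Lagrange stability.

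The only subtle point to watch is that the paper's notion of regular space does not presuppose Hausdorffness, so one cannot conclude that the compact neighborhood $U$ is automatically closed; this is precisely why Lemma~\ref{lem:cptness of closure of cpt in regular sp} (rather than a bare Hausdorff argument) is the right tool for the final step. The construction of the compact neighborhood of $K$ from pointwise local compactness is the one place where a little routine work is needed, but it is standard and does not require further axioms beyond local compactness itself.
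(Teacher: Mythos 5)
Your proposal is correct and follows essentially the same route as the paper: (a) $\Rightarrow$ (b) via a compact neighborhood of $K$ (whose existence from local compactness the paper states as straightforward), convergence from above, and Lemma~\ref{lem:cptness of closure of cpt in regular sp} to get a compact closure; (b) $\Rightarrow$ (c) by Lemma~\ref{lem:eventual Lagrange stability}; and (c) $\Rightarrow$ (a) by definition. The extra detail you supply (the finite-union construction of the compact neighborhood and the remark on non-Hausdorff regularity) is consistent with the paper's argument.
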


\begin{proof}
(a) $\Rightarrow$ (b):
Let $K \subset X$ be a nonempty compact set to which $(X_s)_{s \in S}$ converges from above.
We can choose a compact neighborhood $U$ of $K$ and $s_0 \in S$ such that $\bigcup_{t \in S, t \ge s_0} X_t \subset U$.
This shows $\cls \bigcup_{t \in S, t \ge s_0} X_t \subset \cls(U)$, where $\cls(U)$ is also compact from Lemma~\ref{lem:cptness of closure of cpt in regular sp}.
Therefore, $(X_s)_{s \in S}$ is eventually Lagrange stable.

(b) $\Rightarrow$ (c):
This follows by Lemma~\ref{lem:eventual Lagrange stability}.

(c) $\Rightarrow$ (a):
This is obvious.
\end{proof}

The following is a consequence of Lemma~\ref{lem:eventual Lagrange stability and limit set cptness in locally cpt regular sp} because every locally compact Hausdorff space is regular (ref.\ \cite{Kelley 1955}).
The proof can be omitted.

\begin{corollary}\label{cor:asymptotic cptness and limit set cptness in locally cpt Hausdorff sp}
Suppose that $X$ is a locally compact Hausdorff space.
Let $S = (S, \le)$ be a directed set and $(X_s)_{s \in S}$ be a net of nonempty subsets of $X$.
Then the following properties are equivalent:
\begin{enumerate}[label=\textup{(\alph*)}]
\item $(X_s)_{s \in S}$ converges from above to some nonempty compact set.
\item $(X_s)_{s \in S}$ is eventually Lagrange stable.
\item $(X_s)_{s \in S}$ is asymptotically compact and limit set compact.
\end{enumerate}
\end{corollary}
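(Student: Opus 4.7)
The plan is essentially to invoke Lemma~\ref{lem:eventual Lagrange stability and limit set cptness in locally cpt regular sp} after reducing the hypothesis. Since that lemma already establishes the equivalence of (a), (b), (c) for locally compact \emph{regular} spaces, the only thing I need to verify is the classical implication that every locally compact Hausdorff space is regular. Once this is in hand, the corollary follows by direct citation, with no further analysis of the net $(X_s)_{s \in S}$ required.

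To verify regularity using only tools already developed in the excerpt, I would proceed as follows. Fix $x \in X$ and an open neighborhood $U$ of $x$; the task is to produce a closed neighborhood $V$ of $x$ contained in $U$. Local compactness supplies a compact neighborhood $W$ of $x$. Then $F := W \setminus U$ is closed in the compact space $W$, hence compact, and $x \notin F$. Applying Theorem~\ref{thm:separation of cpt and pt in Hausdorff sp} to the compact set $F$ and the point $x$ in the Hausdorff space $X$ yields disjoint open sets $V_1 \supset F$ and $V_2 \ni x$. The candidate closed neighborhood is
\begin{equation*}
	V := \cls\bigl( V_2 \cap \mathrm{int}(W) \bigr).
\end{equation*}
Its interior contains $x$ (since $V_2 \cap \mathrm{int}(W)$ does), and because $V_2 \cap \mathrm{int}(W)$ is disjoint from the open set $V_1$, its closure lies in the closed set $X \setminus V_1$, hence in $X \setminus F$. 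Combined with $V \subset \cls(\mathrm{int}(W)) \subset W$ (using that $W$ is compact Hausdorff, thus closed), this yields $V \subset W \setminus F = W \cap U \subset U$, as desired.

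The main (and only) obstacle is engineering $V$ so that all three conditions — closed, neighborhood of $x$, contained in $U$ — hold simultaneously; the trick is to take the closure of an open set that is already trapped inside $\mathrm{int}(W)$ and disjoint from $V_1$, which is exactly what the Hausdorff separation in Theorem~\ref{thm:separation of cpt and pt in Hausdorff sp} makes possible. With regularity established, the equivalence of (a), (b), (c) is immediate from Lemma~\ref{lem:eventual Lagrange stability and limit set cptness in locally cpt regular sp}, so the proof can be omitted as the author has chosen.
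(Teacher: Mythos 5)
Your proposal is correct and follows exactly the paper's route: the corollary is deduced from Lemma~\ref{lem:eventual Lagrange stability and limit set cptness in locally cpt regular sp} via the classical fact that every locally compact Hausdorff space is regular, which the paper simply cites from Kelley while you supply a valid in-house verification using Theorem~\ref{thm:separation of cpt and pt in Hausdorff sp}. No gaps; the construction $V = \cls\bigl(V_2 \cap \mathrm{int}(W)\bigr)$ does yield a closed neighborhood of $x$ inside $U$ as claimed.
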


\section{Asymptotic compactness and limit set compactness in uniformizable spaces}
\label{sec:asymptotic cptness and limit set cptness in uniformizable spaces}

Let $X$ be a set.
In this section, we will prove that the asymptotic compactness and the limit set compactness are equivalent for any net of nonempty subsets in uniformizable spaces.

\subsection{Uniformity and uniform spaces}

For the purpose stated above, we recall the definition of uniform spaces.

\begin{notation}[ref.\ \cite{Kelley 1955}]
Let $X$ be a set.
\begin{itemize}
\item Let $\Delta_X$ denote the diagonal set $\Set{(x, x) \in X \times X}{x \in X}$.
\item For every subset $U \subset X \times X$, let
\begin{equation*}
	U^{-1} := \Set{(x, y) \in X \times X}{(y, x) \in U}.
\end{equation*}
\item For every subsets $U, V \subset X \times X$, let
\begin{equation*}
	V \circ U := \Set{(x, y) \in X \times X}{\text{$(x, z) \in U$ and $(z, y) \in V$ for some $z \in X$}}.
\end{equation*}
\end{itemize}
\end{notation}

\begin{definition}[ref.\ \cite{Kelley 1955}]
Let $X$ be a set.
A nonempty collection $\mathcal{U}$ of subsets of $X \times X$ is called a \textit{uniformity} if the following properties are satisfied:
\begin{enumerate}[label=(U\arabic*)]
\item Every $U \in \mathcal{U}$ contains the diagonal $\Delta_X$.
\item For every $U \in \mathcal{U}$, the inverse $U^{-1}$ also belongs to $\mathcal{U}$.
\item For every $U \in \mathcal{U}$, there exists $V \in \mathcal{U}$ such that the composition $V \circ V$ is a subset of $U$.
\item For every $U, V \in \mathcal{U}$, $U \cap V \in \mathcal{U}$.
\item For every $U \in \mathcal{U}$ and every subset $V \subset X \times X$ containing $U$, $V \in \mathcal{U}$ holds.
\end{enumerate}
$U \in \mathcal{U}$ is said to be \textit{symmetric} if $U^{-1} = U$.
In (U3), one can assume that $V$ is symmetric by considering $V \cap V^{-1}$.
The pair $(X, \mathcal{U})$ is called a \textit{uniform space}.
\end{definition}

We note that a uniformity $\mathcal{U}$ on $X$ is considered as a directed poset with the partial order $\le$ defined as follows: $U_1 \le U_2$ if $U_1 \supset U_2$.

\begin{notation}
Let $X$ be a set.
For every subset $U \subset X \times X$, every $x \in X$, and every subset $E \subset X$, let
\begin{equation*}
	U[x] := \Set{y \in X}{(x, y) \in U}, \mspace{15mu} U[E] := \bigcup_{x \in E} U[x].
\end{equation*}
We note that $U[\emptyset]$ is interpreted as $\emptyset$.
\end{notation}

A topological space $X$ is said to be \textit{uniformizable} if the topology is the uniform topology $\mathcal{T}$ of some uniformity $\mathcal{U}$ defined as follows: $T \in \mathcal{T}$ if for every $x \in T$, there exists $U \in \mathcal{U}$ such that $U[x] \subset T$.

\subsection{Convergence from above and asymptotic compactness}

In this subsection, we prove that the convergence from above to some nonempty compact set implies the asymptotic compactness.

\begin{lemma}\label{lem:directed set, convergence to cpt}
Suppose that $X = (X, \mathcal{U})$ is a uniform space.
Let $S = (S, \le)$ be a directed set, $(x_s)_{s \in S}$ be a net in $X$, and $K \subset X$ be a nonempty subset.
Let $I = (I, \le)$ be the subset of the product directed set $S \times \mathcal{U}$ defined by
\begin{equation*}
	I = \Set{(s, U) \in S \times \mathcal{U}}{x_s \in U[K] }.
\end{equation*}
If $(x_s)_{s \in S}$ converges from above to $K$, then for every $(s_1, U_1), (s_2, U_2) \in S \times \mathcal{U}$, there exists an upper bound $(s, U) \in I$ of $(s_1, U_1), (s_2, U_2)$.
\end{lemma}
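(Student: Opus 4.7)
The plan is to construct an explicit upper bound by first choosing a small entourage $U$ below both $U_1$ and $U_2$ in the reversed-inclusion order on $\mathcal{U}$, and then using the convergence from above to obtain an index $s$ large enough to satisfy $x_s \in U[K]$ together with $s \ge s_1, s_2$.

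First, I would set $U := U_1 \cap U_2$. By axiom (U4) this belongs to $\mathcal{U}$, and since $U \subset U_1$ and $U \subset U_2$, the definition of the partial order on $\mathcal{U}$ (reversed inclusion) gives $U_1 \le U$ and $U_2 \le U$. So the entourage component is already handled; the task reduces to finding $s \in S$ with $s \ge s_1$, $s \ge s_2$, and $x_s \in U[K]$.

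Next I would verify that $U[K]$ is a neighborhood of $K$ in the uniform topology. For each $x \in K$, the collection $\{V[x] : V \in \mathcal{U}\}$ is a neighborhood basis at $x$ (this is the standard description of the uniform topology, see \cite{Kelley 1955}), so $U[x]$ is a neighborhood of $x$ and therefore contains an open set $O_x \subset U[x]$ with $x \in O_x$. Then $O := \bigcup_{x \in K} O_x$ is an open set satisfying $K \subset O \subset \bigcup_{x \in K} U[x] = U[K]$, which exhibits $U[K]$ as a neighborhood of $K$.

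Finally, apply the hypothesis that $(x_s)_{s \in S}$ converges from above to $K$: this supplies some $s_0 \in S$ such that $x_s \in U[K]$ for every $s \ge s_0$. Since $S$ is directed, choose $s \in S$ with $s \ge s_0$, $s \ge s_1$, and $s \ge s_2$. Then $x_s \in U[K]$, so $(s, U) \in I$, and the inequalities $s_1 \le s$, $s_2 \le s$ combined with $U_1 \le U$, $U_2 \le U$ make $(s, U)$ an upper bound of $(s_1, U_1)$ and $(s_2, U_2)$ in the product order. There is no serious obstacle here; the only point that requires care is the direction of the order on $\mathcal{U}$ (so that shrinking the entourage means moving up in the order), which is precisely why taking the intersection $U_1 \cap U_2$ works.
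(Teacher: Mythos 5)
Your proposal is correct and follows essentially the same route as the paper's proof: take $U := U_1 \cap U_2$, note that $U[K]$ is a neighborhood of $K$, invoke the convergence from above to get $s_0$, and choose a common upper bound $s$ of $s_0, s_1, s_2$. The only difference is that you spell out the verification that $U[K]$ is a neighborhood of $K$, which the paper states without proof.
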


\begin{proof}
We note that $I$ is nonempty by the assumption.
Let $(s_1, U_1), (s_2, U_2) \in S \times \mathcal{U}$ be given.
We choose $U := U_1 \cap U_2 \in \mathcal{U}$.
Since $U[K]$ is a neighborhood of $K$, there is $s_0 \in S$ such that for all $s \in S$, $s \ge s_0$ implies $x_s \in U[K]$.
Therefore, by choosing an upper bound $s \in S$ of $s_0, s_1, s_2$, we have an upper bound $(s, U) \in I$ of $(s_1, U_1), (s_2, U_2)$.
\end{proof}

\begin{remark}
In particular, the following hold:
\begin{enumerate}[label=\arabic*.]
\item $I = (I, \le)$ is a directed set.
\item $I$ is a cofinal subset of $S \times \mathcal{U}$.
\end{enumerate}
\end{remark}

The following theorem is an extension of Theorem~\ref{thm:cptness and net}.

\begin{theorem}\label{thm:convergence to cpt and convergent subnet in uniformizable sp}
Suppose that $X$ is a uniformizable space.
Then for every directed set $S = (S, \le)$ and every net $(x_s)_{s \in S}$ converging from above to some nonempty compact set $K \subset X$, $(x_s)_{s \in S}$ has a subnet converging to some point in $K$.
\end{theorem}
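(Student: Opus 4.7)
The plan is to exploit a uniformity $\mathcal{U}$ generating the topology of $X$, to pair each $x_s$ with a nearby point of $K$, extract a convergent subnet inside $K$ by compactness, and transfer the limit back to a subnet of $(x_s)_{s \in S}$ via the standard entourage triangle argument.

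First, since $U[K]$ is a neighborhood of $K$ in the uniform topology for every $U \in \mathcal{U}$, the assumption of convergence from above supplies, for each such $U$, some $s \in S$ with $x_t \in U[K]$ for all $t \ge s$. Hence Lemma~\ref{lem:directed set, convergence to cpt} applies to
\begin{equation*}
    I = \Set{(s, U) \in S \times \mathcal{U}}{x_s \in U[K]},
\end{equation*}
so $I$ is cofinal, and thus directed, in $S \times \mathcal{U}$. By the axiom of choice I select for each $(s, U) \in I$ a point $y_{s, U} \in K$ with $(y_{s, U}, x_s) \in U$, yielding a net $(y_{s, U})_{(s, U) \in I}$ in $K$. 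By the compactness of $K$ together with Theorem~\ref{thm:cptness and net}, there exist a directed set $B$ and a map $h \colon B \to I$ with $h(b) \to \bd(I)$ as $b \to \bd(B)$ such that $(y_{h(b)})_{b \in B}$ converges to some $y \in K$. Writing $h(b) = (s_b, U_b)$, the composition of $h$ with the projection $\pi \colon I \to S$, $\pi(s, U) = s$, is final into $S$ thanks to the cofinality of $I$ in $S \times \mathcal{U}$, so $(x_{s_b})_{b \in B}$ is a subnet of $(x_s)_{s \in S}$.

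Finally, I would verify that $(x_{s_b})_{b \in B}$ converges to $y$. Given any $V \in \mathcal{U}$, choose a symmetric $W \in \mathcal{U}$ with $W \circ W \subset V$, and use Lemma~\ref{lem:directed set, convergence to cpt} once more to fix some $(s_*, U_*) \in I$ with $U_* \subset W$. For $b$ sufficiently large in $B$, the convergence of $(y_{h(b)})$ to $y$ yields $(y, y_{h(b)}) \in W$, while $h(b) \ge (s_*, U_*)$ forces $U_b \subset W$ and hence $(y_{h(b)}, x_{s_b}) \in U_b \subset W$ by the choice of $y_{s_b, U_b}$; composing gives $(y, x_{s_b}) \in W \circ W \subset V$, so $x_{s_b} \in V[y]$. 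The main delicacy lies in the subnet bookkeeping: the desired subnet is not indexed by $I$ but by $B$ via $\pi \circ h$, and one must simultaneously track the finality of $\pi \circ h$ into $S$ and the shrinking of the second coordinate $U_b$ to arbitrarily small entourages.
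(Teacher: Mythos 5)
Your proposal is correct and follows essentially the same route as the paper's proof: the same directed set $I$ from Lemma~\ref{lem:directed set, convergence to cpt}, the same choice of companion points $y_{s,U} \in K$ with $(y_{s,U}, x_s) \in U$, extraction of a convergent subnet in $K$ by compactness, and the same symmetric-entourage triangle argument $(y, x_{s_b}) \in W \circ W \subset V$. The only cosmetic difference is that the paper takes the subnet via a monotone final map (Theorem~\ref{thm:cluster pt and convergent subnet}) and uses finality of $j \mapsto U_j$ where you invoke the lemma once more to produce $(s_*, U_*)$ with $U_* \subset W$; your handling of the finality of the projection to $S$ is the same point the paper makes about $I$ being cofinal in $S \times \mathcal{U}$.
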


\begin{proof}
\textbf{Step 1.}
We choose a uniformity $\mathcal{U}$ so that the topology of $X$ is the uniform topology of $\mathcal{U}$.
Let $I$ be the directed set given in Lemma~\ref{lem:directed set, convergence to cpt}.
Then for every $(s, U) \in I$, there is $y_{s, U} \in K$ such that $x_s \in U[y_{s, U}]$, i.e., 
\begin{equation*}
	(y_{s, U}, x_s) \in U.
\end{equation*}
From Theorems~\ref{thm:cptness and net} and \ref{thm:cluster pt and convergent subnet}, there are a directed set $J = (J, \le)$ and a monotone final map $h \colon J \to I$ such that $(y_{h(j)})_{j \in J}$ converges to some point $y \in K$.
Let
\begin{equation*}
	(s_j, U_j) := h(j)
\end{equation*}
for each $j \in J$.
Since $I$ is a cofinal subset of $S \times \mathcal{U}$, the maps
\begin{equation*}
	J \ni j \mapsto s_j \in S,
	\mspace{15mu}
	J \ni j \mapsto U_j \in \mathcal{U}
\end{equation*}
are also monotone and final.

\textbf{Step 2.}
We claim that the subnet $(x_{s_j})_{j \in J}$ of $(x_s)_{s \in S}$ converges to $y$.
Let $U \in \mathcal{U}$ be given.
We choose a symmetric $U' \in \mathcal{U}$ so that $U' \circ U' \subset U$.
Then there is $j_1 \in J$ such that $j \ge j_1$ implies $U_j \ge U'$.
Since $(y_{h(j)})_{j \in J}$ converges to $y$, there is $j_2 \in J$ such that $j \ge j_2$ implies
\begin{equation*}
	y_{h(j)} \in U'[y].
\end{equation*}
Let $j_0 \in J$ be an upper bound of $j_1, j_2$.
Then for all $j \ge j_0$, we have $(y_{h(j)}, x_{s_j}) \in U_j \subset U'$ and $(y, y_{h(j)}) \in U'$, which shows
\begin{equation*}
	(y, x_{s_j}) \in U,
\end{equation*}
i.e., $x_{s_j} \in U[y]$.
Therefore, $(x_{s_j})_{j \in J}$ converges to $y$.
\end{proof}

As a corollary, we can obtain the asymptotic compactness from the convergence from above to some compact set.

\begin{corollary}\label{cor:convergence to cpt and asymptotic cptness in uniformizable sp}
Suppose that $X$ is a uniformizable space.
Let $S$ be a directed set and $(X_s)_{s \in S}$ be a net of subsets of $X$.
If $(X_s)_{s \in S}$ converges from above to some compact set $K \subset X$, then $(X_s)_{s \in S}$ is asymptotically compact.
\end{corollary}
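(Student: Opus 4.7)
The plan is to reduce the statement directly to Theorem~\ref{thm:convergence to cpt and convergent subnet in uniformizable sp}. Fixing a directed set $I$, a subnet $(X_{s_i})_{i \in I}$ of $(X_s)_{s \in S}$, and a choice $(y_i)_{i \in I} \in \prod_{i \in I} X_{s_i}$, I need to exhibit a convergent subnet of $(y_i)_{i \in I}$, interpreted as a net of points in $X$.

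First I would dispense with the trivial case in which the product $\prod_{i \in I} X_{s_i}$ is empty, since then there is no $(y_i)_{i \in I}$ to consider. This subsumes the case $K = \emptyset$: the empty set is an open neighborhood of itself, so convergence from above to $\emptyset$ forces $\bigcup_{t \ge s_0} X_t = \emptyset$ for some $s_0 \in S$, and cofinality of the subnet then forces $X_{s_i} = \emptyset$ eventually, making the product empty. So I may assume $K$ is nonempty and the product is nonempty.

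The key observation is that the net of points $(y_i)_{i \in I}$ itself converges from above to $K$ in the sense of Definition~\ref{dfn:convergence from above of a net of subsets} applied to its singletons. Indeed, for any neighborhood $U$ of $K$, the hypothesis gives $s_0 \in S$ with $\bigcup_{t \ge s_0} X_t \subset U$; since $s_i \to \bd(S)$ as $i \to \bd(I)$, there is $i_0 \in I$ with $s_i \ge s_0$ for all $i \ge i_0$, whence $y_i \in X_{s_i} \subset U$ for all such $i$.

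With this in hand, Theorem~\ref{thm:convergence to cpt and convergent subnet in uniformizable sp}, applied to the nonempty compact set $K$ in the uniformizable space $X$ and to the net $(y_i)_{i \in I}$, produces a subnet converging to a point of $K$. This yields the asymptotic compactness of $(X_s)_{s \in S}$. There is no real obstacle here: the whole content of the corollary is the observation that passing to the subnet and selecting $y_i \in X_{s_i}$ does not destroy the convergence-from-above property, after which Theorem~\ref{thm:convergence to cpt and convergent subnet in uniformizable sp} does all the work.
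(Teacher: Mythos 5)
Your proof is correct and follows essentially the same route as the paper: dispose of the trivial cases, observe that the point net $(y_i)_{i \in I}$ inherits convergence from above to $K$ (the paper states this step without the explicit cofinality argument you supply), and then invoke Theorem~\ref{thm:convergence to cpt and convergent subnet in uniformizable sp}. No gaps.
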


\begin{proof}
Let $I$ be a directed set and $(X_{s_i})_{i \in I}$ be a subnet of $(X_s)_{s \in S}$.
We only have to consider the case that $K$ and $\prod_{i \in I} X_{s_i}$ are nonempty.
Let $(y_i)_{i \in I} \in \prod_{i \in I} X_{s_i}$ be given.
The convergence of $(X_s)_{s \in S}$ from above to $K$ implies that the net $(y_i)_{i \in I}$ in $X$ converges from above to $K$.
Therefore, $(y_i)_{i \in I}$ has a convergent subnet by applying Theorem~\ref{thm:convergence to cpt and convergent subnet in uniformizable sp}.
This shows that $(X_s)_{s \in S}$ is asymptotically compact.
\end{proof}

\subsection{Asymptotic compactness and limit set compactness}

In the following theorem, we will prove that the weak asymptotic compactness induces the limit set compactness.

\begin{theorem}\label{thm:asymptotic cptness and limit set cptness in uniformizable sp}
Suppose that $X$ is a uniformizable space.
Let $S = (S, \le)$ be a directed set and $(X_s)_{s \in S}$ be a net of subsets of $X$.
If $(X_s)_{s \in S}$ is weakly asymptotically compact, then $(X_s)_{s \in S}$ is compact.
Consequently, furthermore, if every $X_s$ is nonempty, then $(X_s)_{s \in S}$ is limit set compact.
\end{theorem}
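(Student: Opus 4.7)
The plan is to show that the limit set $L(X_s)_{s \in S}$ is compact by verifying, via Theorem~\ref{thm:cptness and net}, that every net in $L(X_s)_{s \in S}$ admits a cluster point lying in $L(X_s)_{s \in S}$; the second assertion will then follow by combining this compactness with Lemma~\ref{lem:weak asymptotic cptness and limit set} (which supplies convergence from above of $(X_s)_{s \in S}$ to $L(X_s)_{s \in S}$) and with the observation, already recorded just after Definition~\ref{dfn:weak asymptotic cptness}, that non-emptiness of each $X_s$ forces $L(X_s)_{s \in S}$ to be nonempty under the weak asymptotic compactness hypothesis.

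Fix a uniformity $\mathcal{U}$ inducing the topology of $X$, and let $(z_\alpha)_{\alpha \in A}$ be an arbitrary net in $L(X_s)_{s \in S}$. I would form an auxiliary net of the form required to invoke the weak asymptotic compactness, using the product directed set $B := A \times S \times \mathcal{U}$. Since $z_\alpha \in L(X_s)_{s \in S} \subset \cls \bigcup_{t \in S,\, t \ge s} X_t$, for every $(\alpha, s, U) \in B$ we may choose
\begin{equation*}
	y_{\alpha, s, U} \in \bigcup_{t \in S,\, t \ge s} X_t \amd (z_\alpha, y_{\alpha, s, U}) \in U.
\end{equation*}
The projection $h \colon B \to S$, $(\alpha, s, U) \mapsto s$, is monotone and final, so weak asymptotic compactness applied to $(y_{\alpha, s, U})_{(\alpha, s, U) \in B}$ yields a subnet converging to some point $y^\ast \in X$; by Theorem~\ref{thm:characterization of limit set} this limit automatically lies in $L(X_s)_{s \in S}$.

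The main and only delicate step is to verify that $y^\ast$ is in fact a cluster point of the original net $(z_\alpha)_{\alpha \in A}$. Write the chosen subnet as $(y_{h'(j)})_{j \in J}$ with $h'(j) = (\alpha_j, s_j, U_j)$ and $h'(j) \to \bd(B)$ as $j \to \bd(J)$; in particular $\alpha_j \to \bd(A)$ and the $U_j$ eventually refine any prescribed entourage. Given $\alpha_0 \in A$ and a neighborhood $V$ of $y^\ast$, pick $W \in \mathcal{U}$ with $W[y^\ast] \subset V$ and a symmetric $W' \in \mathcal{U}$ with $W' \circ W' \subset W$. For $j$ sufficiently large we simultaneously obtain $\alpha_j \ge \alpha_0$, $U_j \subset W'$, and $(y^\ast, y_{h'(j)}) \in W'$; combining $(z_{\alpha_j}, y_{h'(j)}) \in U_j \subset W'$ with the symmetry of $W'$ then gives $(y^\ast, z_{\alpha_j}) \in W' \circ W' \subset W$, so $z_{\alpha_j} \in V$. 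Hence $y^\ast$ is a cluster point of $(z_\alpha)_{\alpha \in A}$ lying in $L(X_s)_{s \in S}$, which is the desired compactness. The hardest ingredient is precisely the bookkeeping in this final triangle-style estimate, since it is what genuinely uses uniformizability and lets us transfer the convergence of the selected ``nearby'' points back to the original net in $L(X_s)_{s \in S}$.
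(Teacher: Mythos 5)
Your proposal is correct and follows essentially the same route as the paper's proof: the same auxiliary net indexed by the product directed set of $S$, $\mathcal{U}$, and $A$, the same appeal to weak asymptotic compactness (with the limit landing in $L(X_s)_{s \in S}$ by Theorem~\ref{thm:characterization of limit set}), and the same symmetric-entourage composition argument to transfer convergence back to $(z_\alpha)_{\alpha \in A}$. The only cosmetic difference is that you conclude $y^\ast$ is a cluster point of $(z_\alpha)_{\alpha \in A}$ rather than exhibiting a convergent subnet as the paper does, which is equivalent via Theorem~\ref{thm:cptness and net}.
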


\begin{proof}
Let $A = (A, \le)$ be a directed set and $(z_\alpha)_{\alpha \in A}$ be a net in $L(X_s)_{s \in S}$.
We will show that $(z_\alpha)_{\alpha \in A}$ has a subnet converging to some point in $L(X_s)_{s \in S}$.
Then the compactness of $L(X_s)_{s \in S}$ follows by Theorem~\ref{thm:cptness and net}.

\textbf{Step 1.}
We choose a uniformity $\mathcal{U}$ so that the topology of $X$ is the uniform topology of $\mathcal{U}$.
Let 
\begin{equation*}
	B := S \times \mathcal{U} \times A
\end{equation*}
be the product directed set.
By the definition of $L(X_s)_{s \in S}$, $U[z_\alpha] \cap \bigcup_{t \in S, t \ge s} X_t \ne \emptyset$ holds for every $(s, U, \alpha) \in B$.
Therefore, we can choose a net $(y_{s, U, \alpha})_{(s, U, \alpha) \in B}$ in $X$ so that
	\begin{equation*}
		y_{s, U, \alpha} \in U[z_\alpha] \cap \bigcup_{t \in S, t \ge s} X_t
	\end{equation*}
for all $(s, U, \alpha) \in B$.
Let $h \colon B \to S$ be the monotone final map defined by
\begin{equation*}
	h(s, U, \alpha) = s \mspace{20mu} ((s, U, \alpha) \in B).
\end{equation*}
Since
\begin{equation*}
	y_{s, U, \alpha} \in \bigcup_{t \in S, t \ge h(s, U, \alpha)} X_t
\end{equation*}
for all $(s, U, \alpha) \in B$, the net $(y_{s, U, \alpha})_{(s, U, \alpha) \in B}$ has a subnet converging to some $y \in L(X_s)_{s \in S}$ by the weak asymptotic compactness of $(X_s)_{s \in S}$.
From Theorem~\ref{thm:cluster pt and convergent subnet}, there are a directed set $C = (C, \le)$ and a monotone final map $g \colon C \to B$ such that the net $(y_{g(\gamma)})_{\gamma \in C}$ converges to $y$.

\textbf{Step 2.}
Let
\begin{equation*}
	(s_\gamma, U_\gamma, \alpha_\gamma) := g(\gamma).
\end{equation*}
for every $\gamma \in C$.
Then the maps
\begin{equation*}
	C \ni \gamma \mapsto s_\gamma \in S,
	\mspace{15mu}
	C \ni \gamma \mapsto U_\gamma \in \mathcal{U},
	\mspace{15mu}
	C \ni \gamma \mapsto \alpha_\gamma \in A
\end{equation*}
are also monotone and final.
We claim that the subnet $(z_{\alpha_\gamma})_{\gamma \in C}$ of $(z_\alpha)_{\alpha \in A}$ converges to $y$.
Let $U \in \mathcal{U}$ be given.
We choose a symmetric $U' \in \mathcal{U}$ so that $U' \circ U' \subset U$.
Then there exists $\gamma_0 \in C$ such that for all $\gamma \ge \gamma_0$,
\begin{equation*}
	U_\gamma \ge U', \mspace{15mu} y_{g(\gamma)} \in U'[y],
\end{equation*}
which shows $(z_{\alpha_\gamma}, y_{g(\gamma)}) \in U_\gamma \subset U'$.
Therefore, for all $\gamma \ge \gamma_0$, we have
\begin{equation*}
	(y, z_{\alpha_\gamma}) \in U.
\end{equation*}
This shows that $(z_{\alpha_\gamma})_{\gamma \in C}$ converges to $y$.
Finally, the limit set compactness of $(X_s)_{s \in S}$ under the non-emptiness of every $X_s$ is a consequence of Lemma~\ref{lem:weak asymptotic cptness and limit set}.
\end{proof}

We obtain the equivalence of the asymptotic compactness and the limit set compactness in uniformizable spaces by combining the results of this section.

\begin{theorem}\label{thm:equivalence between asymptotic cptness and limit set cptness in uniformizable sp}
Suppose that $X$ is a uniformizable space.
Let $S$ be a directed set and $(X_s)_{s \in S}$ be a net of nonempty subsets of $X$.
Then the following properties are equivalent:
\begin{enumerate}[label=\textup{(\alph*)}]
\item $(X_s)_{s \in S}$ converges from above to some nonempty compact set.
\item $(X_s)_{s \in S}$ is asymptotically compact.
\item $(X_s)_{s \in S}$ is weakly asymptotically compact.
\item $(X_s)_{s \in S}$ is limit set compact.
\end{enumerate}
\end{theorem}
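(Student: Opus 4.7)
The plan is to close the cycle of implications (a) $\Rightarrow$ (b) $\Rightarrow$ (c) $\Rightarrow$ (d) $\Rightarrow$ (a) by assembling pieces already proved in this section. The theorem is really a packaging of Corollary~\ref{cor:convergence to cpt and asymptotic cptness in uniformizable sp}, Theorem~\ref{thm:asymptotic cptness and limit set cptness in uniformizable sp}, and the definition of limit set compactness, so I would not introduce any new construction.

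First, (a) $\Rightarrow$ (b) is exactly Corollary~\ref{cor:convergence to cpt and asymptotic cptness in uniformizable sp}: under uniformizability, convergence from above to a nonempty compact set forces asymptotic compactness. Next, (b) $\Rightarrow$ (c) is formal and essentially a matter of exhibiting the right subnet. Given any directed set $I$, any monotone final map $h \colon I \to S$, and any $(y_i)_{i \in I} \in \prod_{i \in I} \bigcup_{t \ge h(i)} X_t$, for each $i \in I$ choose $s_i \in S$ with $s_i \ge h(i)$ and $y_i \in X_{s_i}$. Since $h(i) \to \bd(S)$ as $i \to \bd(I)$ and $s_i \ge h(i)$, we get $s_i \to \bd(S)$ as $i \to \bd(I)$, so that $(X_{s_i})_{i \in I}$ is a subnet of $(X_s)_{s \in S}$ in the sense of Definition~\ref{dfn:subnet}. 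The asymptotic compactness of $(X_s)_{s \in S}$ then produces a convergent subnet of $(y_i)_{i \in I}$, proving weak asymptotic compactness.

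The implication (c) $\Rightarrow$ (d) is the second assertion of Theorem~\ref{thm:asymptotic cptness and limit set cptness in uniformizable sp}: since every $X_s$ is nonempty, weak asymptotic compactness yields that $L(X_s)_{s \in S}$ is nonempty and compact, while Lemma~\ref{lem:weak asymptotic cptness and limit set} supplies convergence from above of $(X_s)_{s \in S}$ to $L(X_s)_{s \in S}$, which is the definition of limit set compactness. Finally, (d) $\Rightarrow$ (a) is immediate: by Definition~\ref{dfn:limit set cptness}, taking $K := L(X_s)_{s \in S}$ furnishes a nonempty compact set to which $(X_s)_{s \in S}$ converges from above.

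No real obstacle appears, since the substantive work has been done in the earlier lemmas; the only point that demands minor care is the selection argument in (b) $\Rightarrow$ (c), where I must be sure that the map $i \mapsto s_i$ satisfies $s_i \to \bd(S)$ (and thus yields a legitimate subnet under the paper's convention from Definition~\ref{dfn:subnet}, which does not require monotonicity of the indexing map). Everything else is a direct appeal to the cited statements.
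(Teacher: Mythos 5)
Your proof is correct and follows essentially the same route as the paper: (a) $\Rightarrow$ (b) by Corollary~\ref{cor:convergence to cpt and asymptotic cptness in uniformizable sp}, (c) $\Rightarrow$ (d) by Theorem~\ref{thm:asymptotic cptness and limit set cptness in uniformizable sp} together with Lemma~\ref{lem:weak asymptotic cptness and limit set}, and (d) $\Rightarrow$ (a) by Definition~\ref{dfn:limit set cptness}. The only difference is that you spell out the selection of $s_i \ge h(i)$ for (b) $\Rightarrow$ (c), which the paper treats as immediate from the definitions; your added detail is accurate and harmless.
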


\begin{proof}
(a) $\Rightarrow$ (b):
This is Corollary~\ref{cor:convergence to cpt and asymptotic cptness in uniformizable sp}.
(b) $\Rightarrow$ (c):
This is obvious by definition.
(c) $\Rightarrow$ (d):
This is Theorem~\ref{thm:asymptotic cptness and limit set cptness in uniformizable sp}.
(d) $\Rightarrow$ (a): This is obvious.
\end{proof}

This is an extension of Lemma~\ref{lem:eventual Lagrange stability and limit set cptness in locally cpt regular sp} because every locally compact regular space is completely regular, which is equivalent to the uniformizability (ref.\ \cite{Kelley 1955}).

\section{Asymptotic sequential compactness and limit sets}\label{sec:asymptotic seq cptness}

Throughout this section, let $X$ be a topological space.
The purpose of this section is to investigate the sequential versions of the asymptotic compactness and the weak asymptotic compactness introduced in Section~\ref{sec:asymptotic cptness and limit set cptness}.

\subsection{Sequential directed sets}

For the purpose stated above, we introduce the notion of the sequentiality of directed sets as follows.

\begin{definition}\label{dfn:sequential directed set}
We say that a directed set $S$ is \textit{sequential} if there exists a sequence $(s_n)_{n = 1}^\infty$ in $S$ such that $s_n \to \bd(S)$ as $n \to \infty$.
\end{definition}

As shown in the following lemma, we can choose a monotone final sequence in every sequential directed set.
Therefore, we may assume that the sequence $(s_n)_{n = 1}^\infty$ in Definition~\ref{dfn:sequential directed set} is monotone and final.

\begin{lemma}\label{lem:monotone final seq in seq directed set}
Let $S = (S, \le)$ be a directed set and $(t_n)_{n = 1}^\infty$ be a sequence in $S$ with $t_n \to \bd(S)$ as $n \to \infty$.
Then there exists a monotone final sequence $(s_n)_{n = 1}^\infty$ in $S$ such that $s_n \ge t_n$ for all $n \ge 1$.
\end{lemma}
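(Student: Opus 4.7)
The plan is to build $(s_n)_{n = 1}^\infty$ by induction using the directedness of $S$, and then verify the three required properties (monotonicity, $s_n \ge t_n$, finality) almost directly from the construction and from the hypothesis $t_n \to \bd(S)$.

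First I would set $s_1 := t_1$. Then, assuming $s_n \in S$ has been chosen, I would invoke the fact that $S$ is directed to pick any upper bound $s_{n+1} \in S$ of the pair $\{s_n, t_{n+1}\}$. This single use of directedness at each stage, together with the axiom of dependent choice, yields a sequence $(s_n)_{n = 1}^\infty$ in $S$ such that $s_{n+1} \ge s_n$ and $s_{n+1} \ge t_{n+1}$ for every $n \ge 1$. Monotonicity is then immediate, and the inequality $s_n \ge t_n$ holds for $n = 1$ by definition and for $n \ge 2$ by the upper-bound choice.

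Next I would verify that $s_n \to \bd(S)$ as $n \to \infty$. Fix any $\alpha \in S$. By hypothesis on $(t_n)_{n = 1}^\infty$, there exists $n_0 \ge 1$ such that $t_n \ge \alpha$ for every $n \ge n_0$. Since $s_n \ge t_n$, we conclude $s_n \ge \alpha$ for every $n \ge n_0$, which is exactly the statement that $s_n \to \bd(S)$. Because $(s_n)_{n = 1}^\infty$ is monotone, this condition in turn implies that the image $\{s_n : n \ge 1\}$ is cofinal in $S$, so the map $n \mapsto s_n$ is final in the sense of Definition~\ref{dfn:finality}.

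There is essentially no hard step here: the whole argument is a routine inductive construction, and the only mild subtlety is the use of dependent choice to make countably many upper-bound selections simultaneously. The hypothesis $t_n \to \bd(S)$ is used only once, to upgrade pointwise comparisons $s_n \ge t_n$ to the finality of the sequence $(s_n)_{n = 1}^\infty$.
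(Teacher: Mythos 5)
Your proposal is correct and follows essentially the same route as the paper's proof: set $s_1 = t_1$, inductively take upper bounds of $s_n$ and $t_{n+1}$ via dependent choice, and deduce finality from $s_n \ge t_n$ together with $t_n \to \bd(S)$. No gaps.
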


\begin{proof}
We construct a sequence $(s_n)_{n = 1}^\infty$ satisfying
\begin{equation*}
	s_{n + 1} \ge s_n \amd s_n \ge t_n
\end{equation*}
for all $n \ge 1$ by the following induction argument (by the dependent choice precisely):
\begin{enumerate}[label=(\roman*)]
\item Let $s_1 = t_1$.
\item Let $n \ge 1$ be given.
We assume that $s_1, \dots, s_n \in S$ are chosen so that $s_i \ge t_i$ for all $1 \le i \le n$ and $s_1 \le \dots \le s_n$.
Since $S$ is directed, we can choose an upper bound $s_{n + 1} \in S$ of $s_n, t_{n + 1}$.
Then $s_{n + 1}$ satisfies $s_{n + 1} \ge s_n$ and $s_{n + 1} \ge t_{n + 1}$.
\end{enumerate}
Since $t_n \to \bd(S)$ as $n \to \infty$, the sequence $(s_n)_{n = 1}^\infty$ is final.
\end{proof}

\subsection{Sequential limit sets}

In this subsection, we introduce the following sequential limit sets.

\begin{definition}\label{dfn:seq limit set}
Let $S$ be a sequential directed set and $(X_s)_{s \in S}$ be a net of subsets of $X$.
We define a subset $L_\mathrm{seq}(X_s)_{s \in S}$ of $X$ as follows: $y \in L_\mathrm{seq}(X_s)_{s \in S}$ if there exist a subnet of $(X_{s_n})_{n = 1}^\infty$ of $(X_s)_{s \in S}$ and $(y_n)_{n = 1}^\infty \in \prod_{n = 1}^\infty X_{s_n}$ such that the sequence $(y_n)_{n = 1}^\infty$ in $X$ converges to $y$.
We call $L_\mathrm{seq}(X_s)_{s \in S}$ the \textit{sequential limit set} of $(X_s)_{s \in S}$.
\end{definition}

We note that $L_\mathrm{seq}(X_s)_{s \in S} \subset L(X_s)_{s \in S}$ holds from Theorem~\ref{thm:characterization of limit set}.
We now show that the sequential limit set is identical to the limit set when $X$ is first-countable and $S$ is sequential.
We recall that $X$ is said to be \textit{first-countable} if every point in $X$ has a countable local base.

\begin{remark}
Let $x \in X$ be given.
Suppose that $\Set{U_n \in \mathcal{N}_x}{n \ge 1}$ is a countable local base at $x$.
By considering a family $\Set{U'_n}{n \ge 1}$ in $\mathcal{N}_x$ given by
\begin{equation*}
	U'_n := U_1 \cap \dots \cap U_n \mspace{20mu} (n \ge 1),
\end{equation*}
we may assume that $U_n \supset U_{n + 1}$ holds for all $n \ge 1$, i.e., $(U_n)_{n = 1}^\infty$ is a monotone final sequence in $\mathcal{N}_x$.
\end{remark}

The following lemma shows that the concept of subsequences is suffice for first-countable spaces.

\begin{lemma}[ref.\ \cite{Kelley 1955}]\label{lem:cluster pt and subseq in first-countable sp}
Let $x \in X$ be a cluster point of some sequence $(x_n)_{n = 1}^\infty$ in $X$.
If $X$ is first-countable, then $(x_n)_{n = 1}^\infty$ has a subsequence converging to $x$.
\end{lemma}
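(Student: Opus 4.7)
The plan is to combine the first countability of $X$ with the frequency of the sequence in each neighborhood of $x$ to build a subsequence explicitly by induction.

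First I would invoke first countability to choose a countable local base at $x$, and using the remark just before the lemma, I would assume without loss of generality that this base $(U_k)_{k=1}^\infty$ is monotone decreasing, i.e. $U_1 \supset U_2 \supset \cdots$, so that $(U_k)_{k=1}^\infty$ is a monotone final sequence in $\mathcal{N}_x$.

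Next I would construct indices $n_1 < n_2 < \cdots$ by the dependent choice so that $x_{n_k} \in U_k$ for every $k \ge 1$. For the base case, since $x$ is a cluster point, $(x_n)_{n=1}^\infty$ is frequently in $U_1$, so there is $n_1 \ge 1$ with $x_{n_1} \in U_1$. For the inductive step, assuming $n_1 < \cdots < n_{k-1}$ have been chosen with $x_{n_i} \in U_i$, I would use the fact that $(x_n)_{n=1}^\infty$ is frequently in $U_k$ to produce some $n_k > n_{k-1}$ with $x_{n_k} \in U_k$; this is exactly what the cluster point property guarantees.

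Finally, I would check that $(x_{n_k})_{k=1}^\infty$ converges to $x$. Given any $V \in \mathcal{N}_x$, by the local base property there exists $k_0$ with $U_{k_0} \subset V$, and then for every $k \ge k_0$ the monotonicity of $(U_k)$ and the construction give $x_{n_k} \in U_k \subset U_{k_0} \subset V$. Since $k \mapsto n_k$ is strictly increasing, it satisfies $n_k \to \infty$ as $k \to \infty$, so $(x_{n_k})_{k=1}^\infty$ qualifies as a subsequence in the sense of Definition~\ref{dfn:subnet}. There is no real obstacle here; the only point requiring care is to ensure strict monotonicity of the indices during the induction so that the resulting family is indeed a subnet, but this is immediate since at each step we simply take $n_k$ larger than $n_{k-1}$ among the witnesses of frequency in $U_k$.
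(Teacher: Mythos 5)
Your proof is correct and is exactly the standard argument the paper has in mind when it cites Kelley instead of giving a proof: a decreasing countable local base at $x$ together with the frequency property of a cluster point yields strictly increasing indices $n_k$ with $x_{n_k} \in U_k$, and convergence then follows from the local base property. The one delicate point, strict monotonicity of the indices so that $(x_{n_k})_{k=1}^\infty$ is a subsequence in the sense of Definition~\ref{dfn:subnet}, is handled correctly by applying the frequency property beyond $n_{k-1}$ at each step.
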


\begin{lemma}\label{lem:limit set in first-countable sp for sequential directed set}
Let $S = (S, \le)$ be a sequential directed set and $(X_s)_{s \in S}$ be a net of subsets of $X$.
If $X$ is first-countable, then $L(X_s)_{s \in S} = L_\mathrm{seq}(X_s)_{s \in S}$ holds.
\end{lemma}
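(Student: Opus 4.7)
The inclusion $L_\mathrm{seq}(X_s)_{s \in S} \subset L(X_s)_{s \in S}$ is already recorded (it is immediate from Theorem~\ref{thm:characterization of limit set}, applied with $I = \mathbb{N}$). So my plan is to establish the reverse inclusion by constructing, for each $y \in L(X_s)_{s \in S}$, an explicit subnet $(X_{s_n})_{n=1}^\infty$ together with a choice $(y_n)_{n=1}^\infty \in \prod_{n=1}^\infty X_{s_n}$ such that $y_n \to y$.

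The construction uses two countable resources. First, because $S$ is sequential, Lemma~\ref{lem:monotone final seq in seq directed set} gives a monotone final sequence $(t_n)_{n=1}^\infty$ in $S$. Second, because $X$ is first-countable at $y$, I fix a countable local base $(U_n)_{n=1}^\infty$ at $y$, which (by intersecting initial segments, as in the remark preceding Lemma~\ref{lem:cluster pt and subseq in first-countable sp}) I may take to be monotone decreasing. The plan is then: for each $n \ge 1$, use $y \in L(X_s)_{s \in S} \subset \cls \bigcup_{t \ge t_n} X_t$ to conclude that $U_n \cap \bigcup_{t \ge t_n} X_t \ne \emptyset$, and pick $s_n \in S$ with $s_n \ge t_n$ and a point $y_n \in X_{s_n} \cap U_n$.

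To finish I need two routine verifications. (a) The sequence $(X_{s_n})_{n=1}^\infty$ is a subnet of $(X_s)_{s \in S}$, i.e.\ the map $h \colon \mathbb{N} \to S,\ h(n) = s_n$ satisfies $s_n \to \bd(S)$ as $n \to \infty$. This follows from $s_n \ge t_n$ together with the finality of $(t_n)$: given $\alpha \in S$, finality provides $n_0$ with $t_n \ge \alpha$ for $n \ge n_0$, and then $s_n \ge t_n \ge \alpha$. (b) The sequence $(y_n)_{n=1}^\infty$ converges to $y$: given any $V \in \mathcal{N}_y$, choose $n_0$ with $U_{n_0} \subset V$; the monotonicity of the base gives $y_n \in U_n \subset U_{n_0} \subset V$ for all $n \ge n_0$. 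Combining (a) and (b) yields $y \in L_\mathrm{seq}(X_s)_{s \in S}$.

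There is no real obstacle here; the argument is a straightforward diagonal-style selection, and the only point requiring a little care is the simultaneous use of both countable resources when forming the indexing $n \mapsto (s_n, U_n)$. The sequentiality of $S$ is what lets me drop from arbitrary directed indexing on the $S$-side to $\mathbb{N}$, and the first-countability of $X$ plays the symmetric role on the $X$-side, avoiding any appeal to Lemma~\ref{lem:cluster pt and subseq in first-countable sp} since the convergence is built into the construction.
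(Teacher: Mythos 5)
Your proposal is correct and follows essentially the same argument as the paper: choose a monotone final sequence in $S$ (Lemma~\ref{lem:monotone final seq in seq directed set}) and a monotone decreasing countable local base at $y$, pick $y_n \in U_n \cap \bigcup_{t \ge t_n} X_t$, and extract $s_n \ge t_n$ with $y_n \in X_{s_n}$, the finality of $(t_n)_{n=1}^\infty$ guaranteeing that $(X_{s_n})_{n=1}^\infty$ is a subnet. The only difference is cosmetic (you spell out the subnet and convergence verifications that the paper leaves implicit), so no further comment is needed.
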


\begin{proof}
We only have to consider the case $L(X_s)_{s \in S} \ne \emptyset$.
Let $y \in L(X_s)_{s \in S}$ be given.
We choose a monotone final sequence $(s_n)_{n = 1}^\infty$ in $S$ and a monotone final sequence $(U_n)_{n = 1}^\infty$ in $\mathcal{N}_y$.
Then for every $n \ge 1$, $U_n \cap \bigcup_{t \in S, t \ge s_n} X_t \ne \emptyset$ holds.
Therefore, we can choose a sequence $(y_n)_{n = 1}^\infty$ in $X$ so that
\begin{equation*}
	y_n \in U_n \cap \bigcup_{t \in S, t \ge s_n} X_t
\end{equation*}
for all $n \ge 1$.
This implies that $(y_n)_{n = 1}^\infty$ converges to $y$.
The above also implies the existence of a sequence $(t_n)_{n = 1}^\infty$ in $S$ such that $t_n \ge s_n$ and $y_n \in X_{t_n}$ for all $n \ge 1$.
Therefore, $y \in L_\mathrm{seq}(X_s)_{s \in S}$ holds.
\end{proof}

\subsection{Asymptotic sequential compactness}

In this subsection, we introduce the sequential versions of the asymptotic compactness and the weak asymptotic compactness.

\begin{definition}[cf.\ \cite{Sell--You 2002}]\label{dfn:asymptotic seq cptness}
Let $S$ be a directed set and $(X_s)_{s \in S}$ be a net of subsets of $X$.
We say that $(X_s)_{s \in S}$ is \textit{asymptotically sequentially compact} if (i) $S$ is sequential and (ii) for every subnet $(X_{s_n})_{n = 1}^\infty$ of $(X_s)_{s \in S}$ and every $(y_n)_{n = 1}^\infty \in \prod_{n = 1}^\infty X_{s_n}$, the sequence $(y_n)_{n = 1}^\infty$ in $X$ has a convergent subsequence.
\end{definition}

\begin{remark}
Suppose that $S$ is sequential.
If there exists $s_0 \in S$ such that $X_s = \emptyset$ for all $s \ge s_0$, then it holds that $(X_s)_{s \in S}$ is asymptotically sequentially compact.
\end{remark}

\begin{remark}
Let $X = (X, d)$ be a metric space, $\varPhi \colon \R^+ \times X \to X$ be a semiflow, and $E \subset X$ be a subset.
In \cite{Sell--You 2002}, $\varPhi$ is said to be \textit{asymptotically compact on $E$} if for every sequence $(t_n)_{n = 1}^\infty$ in $\R^+$ with $t_n \to \infty$ as $n \to \infty$ and every sequence $(x_n)_{n = 1}^\infty$ in $E$, $(\varPhi(t_n, x_n))_{n = 1}^\infty$ has a convergent subsequence.
The asymptotic sequential compactness generalizes this notion.
\end{remark}

\begin{definition}\label{dfn:weak asymptotic seq cptness}
Let $S$ be a directed set and $(X_s)_{s \in S}$ be a net of subsets of $X$.
We say that $(X_s)_{s \in S}$ is \textit{weakly asymptotically sequentially compact} if (i) $S$ is sequential and (ii) for every monotone final sequence $(s_n)_{n = 1}^\infty$ in $S$ and every $(y_n)_{n = 1}^\infty \in \prod_{n = 1}^\infty \bigcup_{t \in S, t \ge s_n} X_t$, the sequence $(y_n)_{n = 1}^\infty$ in $X$ has a convergent subsequence.
\end{definition}

\begin{remark}
Suppose that $S$ is sequential, $(s_n)_{n = 1}^\infty$ is a monotone final sequence in $S$, and $(X_s)_{s \in S}$ is weakly asymptotically sequentially compact.
Then for every $(y_n)_{n = 1}^\infty \in \prod_{n = 1}^\infty X_{s_n}$, the sequence $(y_n)_{n = 1}^\infty$ in $X$ has a convergent subsequence.
Therefore, $L_\mathrm{seq}(X_s)_{s \in S}$ is nonempty if every $X_s$ is nonempty.
\end{remark}

By definition, the asymptotic sequential compactness implies the weak asymptotic sequential compactness.
It is not apparent whether the converse holds or not.
The following is the sequential version of Lemma~\ref{lem:weak asymptotic cptness and limit set}.

\begin{lemma}\label{lem:asymptotic seq cptness and seq limit set}
Let $S = (S, \le)$ be a sequential directed set and $(X_s)_{s \in S}$ be a net of subsets of $X$.
If $(X_s)_{s \in S}$ is weakly asymptotically sequentially compact, then $(X_s)_{s \in S}$ converges from above to $L(X_s)_{s \in S}$.
\end{lemma}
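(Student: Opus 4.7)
The plan is to imitate the proof of Lemma~\ref{lem:weak asymptotic cptness and limit set}, replacing the role of general nets by that of sequences and invoking weak asymptotic sequential compactness in place of weak asymptotic compactness. I argue by contradiction: suppose $(X_s)_{s \in S}$ does not converge from above to $L(X_s)_{s \in S}$. Then there is an open neighborhood $U$ of $L(X_s)_{s \in S}$ such that for every $s \in S$,
\begin{equation*}
	\bigcup_{t \in S, t \ge s} X_t \cap (X \setminus U) \ne \emptyset.
\end{equation*}

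Since $S$ is sequential, by Lemma~\ref{lem:monotone final seq in seq directed set} I can fix a monotone final sequence $(s_n)_{n = 1}^\infty$ in $S$. For each $n \ge 1$, choose $y_n \in \bigcup_{t \in S, t \ge s_n} X_t \cap (X \setminus U)$. By the weak asymptotic sequential compactness of $(X_s)_{s \in S}$ applied to the monotone final sequence $(s_n)_{n = 1}^\infty$, the sequence $(y_n)_{n = 1}^\infty$ admits a convergent subsequence $(y_{n_k})_{k = 1}^\infty$ with limit $y \in X$.

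Now $(s_{n_k})_{k = 1}^\infty$ is a subsequence of a monotone final sequence, hence is itself monotone and final; this is the one small point worth verifying, and it follows because $n_k \to \infty$ as $k \to \infty$. Taking the directed set $I = \mathbb{Z}^+$ together with the monotone final map $k \mapsto s_{n_k}$ and the element $(y_{n_k})_{k = 1}^\infty \in \prod_{k = 1}^\infty \bigcup_{t \in S, t \ge s_{n_k}} X_t$, characterization (c) of Theorem~\ref{thm:characterization of limit set} yields $y \in L(X_s)_{s \in S}$. On the other hand, each $y_{n_k}$ lies in the closed set $X \setminus U$, so the limit $y$ lies in $X \setminus U$ as well. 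This contradicts $L(X_s)_{s \in S} \subset U$, completing the proof.

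The argument presents no real obstacle; the only mildly delicate step is recognizing that the extraction of a subsequence preserves monotonicity and finality, which is needed so that Theorem~\ref{thm:characterization of limit set}~(c) applies to place the cluster point into the limit set.
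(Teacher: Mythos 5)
Your proof is correct and follows essentially the same route as the paper: argue by contradiction, pick $y_n \in \bigcup_{t \ge s_n} X_t \cap (X \setminus U)$ along a monotone final sequence, extract a convergent subsequence by weak asymptotic sequential compactness, and place its limit in $L(X_s)_{s \in S} \cap (X \setminus U)$. The only cosmetic difference is that the paper phrases the last step through the sequential limit set $L_{\mathrm{seq}}(X_s)_{s \in S} \subset L(X_s)_{s \in S}$, whereas you invoke Theorem~\ref{thm:characterization of limit set}~(c) directly, correctly checking that $(s_{n_k})_{k=1}^\infty$ remains monotone and final.
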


\begin{proof}
We suppose that $(X_s)_{s \in S}$ does not converge from above to $L(X_s)_{s \in S}$.
Then there exists an open neighborhood $U$ of $L(X_s)_{s \in S}$ such that for all $s \in S$,
\begin{equation*}
	\bigcup_{t \in S, t \ge s} X_t \cap (X \setminus U) \ne \emptyset.
\end{equation*}
Therefore, we can choose a net $(y_s)_{s \in S}$ in $X$ so that
\begin{equation*}
	y_s \in \bigcup_{t \in S, t \ge s} X_t \cap (X \setminus U)
\end{equation*}
for all $s \in S$.
By choosing a monotone final sequence $(s_n)_{n = 1}^\infty$ in $S$, we can obtain
\begin{equation*}
	\emptyset
	\ne L_\mathrm{seq}(X_s)_{s \in S} \cap (X \setminus U)
	\subset L(X_s)_{s \in S} \cap (X \setminus U)
\end{equation*}
in the similar way to Lemma~\ref{lem:weak asymptotic cptness and limit set}.
This is a contradiction.
\end{proof}

In the remainder of this subsection, we study a relation between the convergence from above to a compact set and the asymptotic sequential compactness when $S$ is sequential and $X$ is first-countable.
For this purpose, we use the following lemma.

\begin{lemma}\label{lem:convergence of seq to cpt}
Let $(x_n)_{n = 1}^\infty$ be a sequence in $X$.
If $(x_n)_{n = 1}^\infty$ converges from above to some nonempty compact set $K \subset X$, then it has a convergent subnet.
Consequently, furthermore, if $X$ is first-countable, then $(x_n)_{n = 1}^\infty$ has a convergent subsequence.
\end{lemma}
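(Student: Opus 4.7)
The plan is to establish the existence of a cluster point of $(x_n)_{n=1}^\infty$ inside $K$, and then invoke the existing machinery: Theorem~\ref{thm:cluster pt and convergent subnet} converts a cluster point into a convergent subnet, and Lemma~\ref{lem:cluster pt and subseq in first-countable sp} converts it into a convergent subsequence under first-countability.

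To produce a cluster point in $K$, I would argue by contradiction using only the compactness of $K$ together with the hypothesis of convergence from above. Suppose no $x \in K$ were a cluster point of $(x_n)_{n=1}^\infty$. Then, by Definition~\ref{dfn:convergence and cluster pt}, for each $x \in K$ there would exist an open neighborhood $V_x \in \mathcal{N}_x$ and an integer $N_x \ge 1$ such that $x_n \notin V_x$ for all $n \ge N_x$. The collection $\{V_x\}_{x \in K}$ is an open cover of the compact set $K$, so it admits a finite subcover $V_{x_1}, \ldots, V_{x_k}$. Setting $U := V_{x_1} \cup \cdots \cup V_{x_k}$ gives an open neighborhood of $K$.

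Now the convergence-from-above hypothesis applied to this $U$ yields some $N_0 \ge 1$ with $x_n \in U$ for all $n \ge N_0$. Choosing $n \ge \max(N_0, N_{x_1}, \ldots, N_{x_k})$, one has on the one hand $x_n \in U$, so $x_n \in V_{x_i}$ for some $i$, and on the other hand $x_n \notin V_{x_i}$ for every $i$, a contradiction. Hence there exists $x \in K$ that is a cluster point of $(x_n)_{n=1}^\infty$.

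With a cluster point in hand, Theorem~\ref{thm:cluster pt and convergent subnet} produces a subnet of $(x_n)_{n=1}^\infty$ converging to $x$, proving the first assertion. If in addition $X$ is first-countable, Lemma~\ref{lem:cluster pt and subseq in first-countable sp} upgrades this to a convergent subsequence, proving the ``consequently'' clause. The argument is essentially routine; the only subtle point is ensuring that the finite-subcover construction can be performed, which is where the nonemptiness and compactness of $K$ are both used—but no separation axiom on $X$ is required, so the lemma holds in full generality.
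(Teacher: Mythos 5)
Your argument is correct, but it takes a slightly different route from the paper. The paper's proof enlarges $K$ to $\bar{K} := \Set{x_n}{n \ge 1} \cup K$ and shows that $\bar{K}$ is compact (a finite subcover of $K$ absorbs the tail of the sequence by convergence from above, and the finitely many remaining points are covered separately), after which Theorem~\ref{thm:cptness and net} supplies the cluster point and convergent subnet; you instead aim directly at a cluster point in $K$, assuming none exists, extracting for each $x \in K$ an open $V_x$ that the sequence eventually avoids, passing to a finite subcover of $K$, and playing the union of these $V_x$ against the convergence-from-above hypothesis to get a contradiction. Both proofs rest on the same two ingredients (a finite subcover of $K$ and eventual containment of the tail in the resulting neighborhood) and neither needs any separation axiom, so the difference is one of packaging rather than substance. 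Your version is more direct and has the small bonus of locating the cluster point, hence the limit of the subnet and of the subsequence, inside $K$ itself, a refinement the paper's statement does not record; the paper's version instead isolates the reusable fact that a compact set together with the tail-convergent sequence is again compact. One minor point of care in your write-up: the negation of ``cluster point'' a priori gives a neighborhood, not an open one, that the sequence eventually avoids, but since every neighborhood contains an open set containing the point, your choice of open $V_x$ is harmless; the final appeals to Theorem~\ref{thm:cluster pt and convergent subnet} and Lemma~\ref{lem:cluster pt and subseq in first-countable sp} are exactly as in the paper.
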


We give the proof for the sake of completeness although it is standard.

\begin{proof}[Proof of Lemma~\ref{lem:convergence of seq to cpt}]
Let
\begin{equation*}
	\bar{K} := \Set{x_n}{n \ge 1} \cup K.
\end{equation*}
We claim that $\bar{K}$ is compact.
Then the existence of a convergent subnet follows by Theorem~\ref{thm:cptness and net}.
Let $\mathcal{U}$ be an open cover of $\bar{K}$.
Since $\mathcal{U}$ is also an open cover of $K$, there is a finite sub-collection $\mathcal{U}_0 \subset \mathcal{U}$ such that $\mathcal{U}_0$ is a cover of $K$.
Then $\bigcup \mathcal{U}_0 := \bigcup_{U \in \mathcal{U}_0} U$ is a neighborhood of $K$.
Therefore, there is $n_0 \ge 1$ such that for all $n \ge n_0$,
\begin{equation*}
	x_n \in \bigcup \mathcal{U}_0
\end{equation*}
holds.
Since $\Set{x_n}{1 \le n < n_0}$ is a finite set, there is a finite sub-collection $\mathcal{U}_1 \subset \mathcal{U}$ such that $\mathcal{U}_1$ is a cover of $\Set{x_n}{1 \le n < n_0}$.
This shows that $\mathcal{U}_0 \cup \mathcal{U}_1$ is a subcover of $\bar{K}$.
Therefore, the compactness of $\bar{K}$ is obtained.
Finally, the existence of a convergent subsequence under the first-countability of $X$ is a consequence of Lemma~\ref{lem:cluster pt and subseq in first-countable sp}.
\end{proof}

The following is a corollary of Lemma~\ref{lem:convergence of seq to cpt}.

\begin{corollary}\label{cor:convergence to cpt and asymptotic seq cptness}
Suppose that $X$ is first-countable.
Let $S$ be a sequential directed set and $(X_s)_{s \in S}$ be a net of subsets of $X$.
If $(X_s)_{s \in S}$ converges from above to some compact set $K \subset X$, then $(X_s)_{s \in S}$ is asymptotically sequentially compact.
\end{corollary}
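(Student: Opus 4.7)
The plan is to reduce everything to Lemma~\ref{lem:convergence of seq to cpt}, which already handles a single sequence converging from above to a compact set. The condition (i) that $S$ is sequential is a hypothesis, so only condition (ii) of Definition~\ref{dfn:asymptotic seq cptness} needs to be verified.

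First I would dispose of the degenerate case $K = \emptyset$. Since $\emptyset$ is an open neighborhood of itself, the convergence from above gives $s_0 \in S$ with $X_t = \emptyset$ for all $t \ge s_0$; then for any subnet $(X_{s_n})_{n=1}^\infty$ the product $\prod_{n=1}^\infty X_{s_n}$ is empty (once $s_n \ge s_0$, which happens eventually because $s_n \to \bd(S)$), so (ii) holds vacuously.

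In the main case $K \ne \emptyset$, fix a subnet $(X_{s_n})_{n=1}^\infty$ and a selector $(y_n)_{n=1}^\infty \in \prod_{n=1}^\infty X_{s_n}$. I would show that the sequence $(y_n)_{n=1}^\infty$ converges from above to $K$: given any neighborhood $U$ of $K$, pick $s^\ast \in S$ with $\bigcup_{t \ge s^\ast} X_t \subset U$ (using the convergence from above of $(X_s)_{s \in S}$); since $s_n \to \bd(S)$, eventually $s_n \ge s^\ast$, hence $y_n \in X_{s_n} \subset U$. Then Lemma~\ref{lem:convergence of seq to cpt}, together with the first-countability of $X$, immediately yields a convergent subsequence of $(y_n)_{n=1}^\infty$.

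There is no real obstacle here; the work has been front-loaded into Lemma~\ref{lem:convergence of seq to cpt} (compactness of $\{x_n : n \ge 1\} \cup K$) and Lemma~\ref{lem:cluster pt and subseq in first-countable sp} (passage from a cluster point to a convergent subsequence in first-countable spaces). The only bookkeeping is the observation that restriction to a subnet and then to a pointwise selector preserves the property of converging from above to $K$, which follows from the fact that $s_n \to \bd(S)$ for any subnet.
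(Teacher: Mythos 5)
Your proposal is correct and follows the paper's own route: reduce to a single sequence $(y_n)_{n=1}^\infty$ that converges from above to $K$ (using $s_n \to \bd(S)$ for the chosen subnet) and then invoke Lemma~\ref{lem:convergence of seq to cpt} together with first-countability. The only difference is that you spell out the degenerate case $K = \emptyset$ explicitly, which the paper dispatches by restricting to the case that $K$ and the selector product are nonempty.
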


\begin{proof}
Let $(X_{s_n})_{n = 1}^\infty$ be a subnet of $(X_s)_{s \in S}$.
We only have to consider the case that $K$ and $\prod_{n = 1}^\infty X_{s_n}$ are nonempty.
Let $(y_n)_{n = 1}^\infty \in \prod_{n = 1}^\infty X_{s_n}$ be given.
By the assumption, the sequence $(y_n)_{n = 1}^\infty$ in $X$ converges from above to some nonempty compact set.
Therefore, $(y_n)_{n = 1}^\infty$ has a convergent subsequence from Lemma~\ref{lem:convergence of seq to cpt}.
This shows that $(X_s)_{s \in S}$ is asymptotically sequentially compact.
\end{proof}

Lemma~\ref{lem:convergence of seq to cpt} also gives the following statement.
It should be compared with Lemma~\ref{lem:asymptotic seq cptness and seq limit set}.

\begin{lemma}\label{lem:convergence to cpt and limit set for sequential S}
Let $S = (S, \le)$ be a sequential directed set and $(X_s)_{s \in S}$ be a net of nonempty subsets of $X$.
If $(X_s)_{s \in S}$ converges from above to some nonempty compact set, then the following statements hold:
\begin{enumerate}[label=\textup{\arabic*.}]
\item $L(X_s)_{s \in S} \ne \emptyset$.
\item $(X_s)_{s \in S}$ converges from above to $L(X_s)_{s \in S}$.
\end{enumerate}
\end{lemma}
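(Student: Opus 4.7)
The plan is to use the sequentiality of $S$ to reduce matters to sequences in $X$, then apply Lemma~\ref{lem:convergence of seq to cpt} to extract convergent subnets and Theorem~\ref{thm:characterization of limit set} to locate their limits inside $L(X_s)_{s \in S}$.

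For statement 1, I would invoke Lemma~\ref{lem:monotone final seq in seq directed set} to obtain a monotone final sequence $(s_n)_{n = 1}^\infty$ in $S$ and, using the non-emptiness of each $X_s$, pick $x_n \in X_{s_n}$. Since $(X_s)_{s \in S}$ converges from above to the given nonempty compact set $K$ and $(s_n)$ is final, the sequence $(x_n)_{n = 1}^\infty$ also converges from above to $K$. Lemma~\ref{lem:convergence of seq to cpt} then yields a convergent subnet $(x_{h(j)})_{j \in J}$ with some limit $y$. Because $(X_{s_n})_{n = 1}^\infty$ is a subnet of $(X_s)_{s \in S}$, so is $(X_{s_{h(j)}})_{j \in J}$, and criterion (d) of Theorem~\ref{thm:characterization of limit set} places $y$ in $L(X_s)_{s \in S}$; in particular, the limit set is nonempty.

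For statement 2, I would argue by contradiction. Assume an open neighborhood $U$ of $L(X_s)_{s \in S}$ satisfies $\bigcup_{t \ge s} X_t \cap (X \setminus U) \ne \emptyset$ for every $s \in S$. Taking once more a monotone final sequence $(s_n)_{n = 1}^\infty$ in $S$, I would choose $t_n \in S$ with $t_n \ge s_n$ and $y_n \in X_{t_n} \cap (X \setminus U)$; the sequence $(t_n)_{n = 1}^\infty$ remains final in $S$, and $(y_n)_{n = 1}^\infty$ converges from above to $K$ by hypothesis. Lemma~\ref{lem:convergence of seq to cpt} produces a convergent subnet of $(y_n)_{n = 1}^\infty$ with limit $y$, which lies in the closed set $X \setminus U$; but applying Theorem~\ref{thm:characterization of limit set}(d) along the subnet $(X_{t_n})_{n = 1}^\infty$ places $y$ in $L(X_s)_{s \in S} \subset U$, the desired contradiction.

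The main delicacy is that $X$ is not assumed first-countable, so Lemma~\ref{lem:convergence of seq to cpt} only yields a convergent subnet indexed by a general directed set, not a subsequence. One must therefore verify that the composed indexing map into $S$ remains final when feeding it into Theorem~\ref{thm:characterization of limit set}(d); this is immediate because the composition of two final maps is final, so no first-countability of $X$ is needed and no hidden appeal to a diagonal-type subsequence extraction is required.
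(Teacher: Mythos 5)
Your proof is correct and takes essentially the same route as the paper's: choose a monotone final sequence in $S$, select points in the corresponding $X_{s_n}$ (resp.\ in $\bigcup_{t \ge s_n} X_t \cap (X \setminus U)$ for the contradiction in statement 2), extract a convergent subnet via Lemma~\ref{lem:convergence of seq to cpt}, and place its limit in $L(X_s)_{s \in S}$ via Theorem~\ref{thm:characterization of limit set}(d). One small caveat on wording: what Theorem~\ref{thm:characterization of limit set}(d) requires is the subnet property $s_{h(j)} \to \bd(S)$ of Definition~\ref{dfn:subnet}, which indeed holds here because $(s_n)_{n=1}^\infty$ is monotone final and $h(j) \to \infty$, not because ``the composition of two final maps is final'' (finality of images alone is not preserved under composition in general).
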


\begin{proof}
1. We choose a monotone final sequence $(s_n)_{n = 1}^\infty$ in $S$ and $(x_s)_{s \in S} \in \prod_{s \in S} X_s$.
Then $(x_{s_n})_{n = 1}^\infty$ is a subnet of $(x_s)_{s \in S}$ and converges from above to some nonempty compact set.
From Lemma~\ref{lem:convergence of seq to cpt}, $(x_{s_n})_{n = 1}^\infty$ has a convergent subnet.
Therefore, $L(X_s)_{s \in S}$ is nonempty from Theorem~\ref{thm:characterization of limit set}.

2. We suppose that $(X_s)_{s \in S}$ does not converge to $L(X_s)_{s \in S}$.
Then there exists an open neighborhood $U$ of $L(X_s)_{s \in S}$ such that for all $s \in S$, $\bigcup_{t \in S, t \ge s} X_t \cap (X \setminus U) \ne \emptyset$ holds.
Therefore, we can choose a net $(y_s)_{s \in S}$ in $X$ so that
\begin{equation*}
	y_s \in \bigcup_{t \in S, t \ge s} X_t \cap (X \setminus U)
\end{equation*}
for all $s \in S$.
By choosing a monotone final sequence $(s_n)_{n = 1}^\infty$ in $S$, we can obtain
	\begin{equation*}
		L(X_s)_{s \in S} \cap (X \setminus U) \ne \emptyset
	\end{equation*}
in the same way as the proof of the statement 1.
This is a contradiction.
\end{proof}

The following is a corollary of Lemma~\ref{lem:convergence to cpt and limit set for sequential S}.

\begin{corollary}[cf.\ \cite{Marzocchi--ZandonellaNecca 2002}]\label{cor:limit set cptness in Hausdorff or regular sp}
Suppose that $X$ is Hausdorff or regular.
Let $S = (S, \le)$ be a sequential directed set and $(X_s)_{s \in S}$ be a net of nonempty subsets of $X$.
Then $(X_s)_{s \in S}$ converges from above to some nonempty compact set if and only if $(X_s)_{s \in S}$ is limit set compact.
\end{corollary}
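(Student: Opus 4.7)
The plan is to prove the nontrivial direction $(\Rightarrow)$ by combining the sequential-directed-set lemma with the Hausdorff/regular containment lemmas, and to dispatch $(\Leftarrow)$ by unpacking the definition of limit set compactness.

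For the backward implication, limit set compactness by definition says $L(X_s)_{s \in S}$ is a nonempty compact set to which $(X_s)_{s \in S}$ converges from above, so that set itself witnesses convergence from above to a nonempty compact set. This is immediate.

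For the forward implication, suppose $(X_s)_{s \in S}$ converges from above to some nonempty compact $K \subset X$. I first invoke Lemma~\ref{lem:convergence to cpt and limit set for sequential S}, which uses the sequentiality of $S$ and the nonemptiness of each $X_s$, to conclude that $L(X_s)_{s \in S}$ is nonempty and that $(X_s)_{s \in S}$ converges from above to $L(X_s)_{s \in S}$. It remains only to verify that $L(X_s)_{s \in S}$ is compact, and here the two separation hypotheses play slightly different roles.

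In the Hausdorff case, Lemma~\ref{lem:convergence to cpt in Hausdorff sp} gives $L(X_s)_{s \in S} \subset K$. In the regular case, Lemma~\ref{lem:convergence in regular sp} gives $L(X_s)_{s \in S} \subset \cls(K)$, and Lemma~\ref{lem:cptness of closure of cpt in regular sp} ensures that $\cls(K)$ is itself compact. In either case, $L(X_s)_{s \in S}$ is an intersection of closed sets, hence closed in $X$, and is contained in a compact set; since a closed subset of a compact set is compact in any topological space, $L(X_s)_{s \in S}$ is compact. This completes the limit set compactness.

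No serious obstacle is anticipated; the main point to be careful about is that the regular hypothesis does not automatically make $K$ closed, which is why one must pass through $\cls(K)$ and appeal to Lemma~\ref{lem:cptness of closure of cpt in regular sp} rather than simply intersecting $L(X_s)_{s \in S}$ with $K$ itself.
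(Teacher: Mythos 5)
Your proposal is correct and follows essentially the same route as the paper: Lemma~\ref{lem:convergence to cpt and limit set for sequential S} for non-emptiness and convergence from above to $L(X_s)_{s \in S}$, then Lemma~\ref{lem:convergence to cpt in Hausdorff sp} or Lemma~\ref{lem:convergence in regular sp} together with Lemma~\ref{lem:cptness of closure of cpt in regular sp} and the closedness of the limit set to get compactness. Your separate treatment of the Hausdorff case (using $L(X_s)_{s \in S} \subset K$ directly rather than passing through $\cls(K)$) is in fact slightly cleaner than the paper's phrasing, but it is not a different argument.
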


\begin{proof}
We need to show the only-if-part.
Lemma~\ref{lem:convergence to cpt and limit set for sequential S} implies that $L(X_s)_{s \in S}$ is nonempty and $(X_s)_{s \in S}$ converges from above to $L(X_s)_{s \in S}$.
Let $K$ be a nonempty compact set to which $(X_s)_{s \in S}$ converges from above.
Then we have
\begin{equation*}
	L(X_s)_{s \in S} \subset \cls(K)
\end{equation*}
from Lemma~\ref{lem:convergence to cpt in Hausdorff sp} or Lemma~\ref{lem:convergence in regular sp}.
Since $\cls(K)$ is compact from Lemma~\ref{lem:cptness of closure of cpt in regular sp}, this implies the compactness of $L(X_s)_{s \in S}$ by its closedness.
Therefore, $(X_s)_{s \in S}$ is limit set compact.
\end{proof}

\begin{remark}
Suppose that $X$ is a regular Hausdorff space.
Let $\varPhi \colon \R^+ \times X \to X$ be a semiflow and $E \subset X$ be a subset.
In \cite[Theorem 2.10]{Marzocchi--ZandonellaNecca 2002}, it is proved that if $E$ is attracted by some nonempty compact set under $\varPhi$, then the omega limit set $\omega_\varPhi(E)$ of $E$ for $\varPhi$ is a nonempty compact set which attracts $E$ under $\varPhi$.
\end{remark}

\subsection{Asymptotic sequential compactness and limit set compactness in pseudo-metrizable spaces}

As the following theorem shows, in Theorem~\ref{thm:asymptotic cptness and limit set cptness in uniformizable sp}, we can replace the weak asymptotic compactness with the weak asymptotic sequential compactness when $X$ is pseudo-metrizable and $S$ is sequential.

\begin{theorem}[cf.\ \cite{Sell--You 2002}]\label{thm:asymptotic cptness and limit set cptness in pseudo-metrizable sp}
Suppose that $X$ is pseudo-metrizable.
Let $S = (S, \le)$ be a sequential directed set and $(X_s)_{s \in S}$ be a net of subsets of $X$.
If $(X_s)_{s \in S}$ is weakly asymptotically sequentially compact, then $L(X_s)_{s \in S}$ is compact.
Consequently, furthermore, if every $X_s$ is nonempty, then $(X_s)_{s \in S}$ is limit set compact.
\end{theorem}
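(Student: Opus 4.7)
The strategy mirrors the proof of Theorem~\ref{thm:asymptotic cptness and limit set cptness in uniformizable sp}, with nets replaced by sequences in order to exploit both the pseudo-metrizability of $X$ and the sequentiality of $S$. First I would fix a pseudo-metric $d$ inducing the topology of $X$, and, using Lemma~\ref{lem:monotone final seq in seq directed set}, fix a monotone final sequence $(s_n)_{n = 1}^\infty$ in $S$. By Lemma~\ref{lem:asymptotic seq cptness and seq limit set}, the weak asymptotic sequential compactness already delivers that $(X_s)_{s \in S}$ converges from above to $L := L(X_s)_{s \in S}$, so the only nontrivial task is to show that $L$ is compact.

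To prove compactness of $L$, I would show $L$ is sequentially compact (in the pseudo-metric $d$), and then conclude. Given an arbitrary sequence $(z_n)_{n = 1}^\infty$ in $L$, the containment $z_n \in \cls \bigcup_{t \ge s_n} X_t$ ensures that $B_d(z_n; 1/n) \cap \bigcup_{t \in S,\, t \ge s_n} X_t \ne \emptyset$, so I can pick
\begin{equation*}
	y_n \in \bigcup_{t \in S,\, t \ge s_n} X_t \amd d(z_n, y_n) < \frac{1}{n}
\end{equation*}
for each $n \ge 1$. Since $(s_n)_{n = 1}^\infty$ is monotone and final, the weak asymptotic sequential compactness applies to $(y_n)_{n = 1}^\infty$, producing a subsequence $(y_{n_k})_{k = 1}^\infty$ converging to some $y \in X$; Theorem~\ref{thm:characterization of limit set} then places $y \in L$. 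The triangle inequality
\begin{equation*}
	d(z_{n_k}, y) \le d(z_{n_k}, y_{n_k}) + d(y_{n_k}, y) \to 0 \mspace{20mu} (k \to \infty)
\end{equation*}
forces $(z_{n_k})_{k = 1}^\infty$ to converge to the same $y \in L$, establishing sequential compactness of $L$.

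To pass from sequential compactness to compactness, I would invoke the classical fact that in a pseudo-metric space the two notions coincide for subsets; the cleanest justification is to descend to the metric quotient $X/{\sim}$, where $x \sim x'$ iff $d(x, x') = 0$, under which the quotient map is a continuous open surjection identifying compact and sequentially compact subsets with their classical metric-space counterparts. This is the one step that is not already packaged in an earlier lemma of the paper, so it is the main point requiring care; beyond that, all ingredients are standard.

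Finally, for the ``consequently'' clause, if every $X_s$ is nonempty then $\bigcup_{t \ge s_n} X_t$ is nonempty for every $n$, and any $(y_n) \in \prod_{n = 1}^\infty \bigcup_{t \ge s_n} X_t$ admits a convergent subsequence by weak asymptotic sequential compactness; its limit lies in $L$ by Theorem~\ref{thm:characterization of limit set}, so $L$ is a nonempty compact set. Together with the convergence from above supplied in the first paragraph, this yields that $(X_s)_{s \in S}$ is limit set compact, completing the argument.
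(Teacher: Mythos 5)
Your proposal is correct and follows essentially the same route as the paper's proof: choose a monotone final sequence $(s_n)_{n=1}^\infty$, approximate a given sequence $(z_n)_{n=1}^\infty$ in $L(X_s)_{s \in S}$ by points $y_n \in B_d(z_n; 1/n) \cap \bigcup_{t \in S,\, t \ge s_n} X_t$, extract a convergent subsequence by the weak asymptotic sequential compactness, transfer the limit to $(z_{n_k})_{k=1}^\infty$ via the triangle inequality, and invoke Lemma~\ref{lem:asymptotic seq cptness and seq limit set} (together with the non-emptiness remark) for the limit set compactness. Your explicit treatment of the passage from sequential compactness to compactness of $L(X_s)_{s \in S}$ in a pseudo-metric space is a point the paper leaves implicit (its proof ends by asserting sequential compactness), so this is a welcome clarification rather than a genuinely different approach.
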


We give the proof for the sake of completeness although it is similar to the proof of the compactness of the omega limit set $\omega_\varPhi(E)$ of $E$ for a semiflow $\varPhi \colon \R^+ \times X \to X$ given in \cite{Sell--You 2002}.

\begin{proof}[Proof of Theorem~\ref{thm:asymptotic cptness and limit set cptness in pseudo-metrizable sp}]
Let $d$ be a pseudo-metric generating the topology of $X$.
Let $(s_n)_{n = 1}^\infty$ be a monotone final sequence in $S$ and $(z_n)_{n = 1}^\infty$ be a sequence in $L(X_s)_{s \in S}$.
Then for every $n \ge 1$, $B_d(z_n; 1/n) \cap \bigcup_{t \in S, t \ge s_n} X_t \ne \emptyset$.
Therefore, we can choose a sequence $(y_n)_{n = 1}^\infty$ in $X$ so that
\begin{equation*}
	y_n \in B_d {\left( z_n; \frac{1}{n} \right)} \cap \bigcup_{t \in S, t \ge s_n} X_t
\end{equation*}
for all $n \ge 1$.
The sequence $(y_n)_{n = 1}^\infty$ has a subsequence $(y_{n_k})_{k = 1}^\infty$ converging to some $y \in L(X_s)_{s \in S}$ by the weak asymptotic sequential compactness of $(X_s)_{s \in S}$.
Then $(z_{n_k})_{k = 1}^\infty$ also converges to $y$ because
	\begin{equation*}
		d(z_{n_k}, y) \le d(z_{n_k}, y_{n_k}) + d(y_{n_k}, y)
	\end{equation*}
holds for all $k \ge 1$.
Therefore, it holds that $L(X_s)_{s \in S}$ is sequentially compact.
The limit set compactness of $(X_s)_{s \in S}$ under the non-emptiness of every $X_s$ is a consequence of Lemma~\ref{lem:asymptotic seq cptness and seq limit set}.
\end{proof}

We finally obtain the equivalence of the asymptotic sequential compactness and the limit set compactness in pseudo-metrizable spaces when $S$ is sequential.

\begin{theorem}\label{thm:asymptotic seq cptness and limit set cptness in pseudo-metrizable sp}
Suppose that $X$ is pseudo-metrizable.
Let $S$ be a sequential directed set and $(X_s)_{s \in S}$ be a net of nonempty subsets of $X$.
Then the following properties are equivalent:
\begin{enumerate}[label=\textup{(\alph*)}]
\item $(X_s)_{s \in S}$ converges from above to some nonempty compact set.
\item $(X_s)_{s \in S}$ is asymptotically sequentially compact.
\item $(X_s)_{s \in S}$ is weakly asymptotically sequentially compact.
\item $(X_s)_{s \in S}$ is limit set compact.
\end{enumerate}
\end{theorem}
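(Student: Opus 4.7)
The plan is to prove the theorem by chaining four implications in the cycle (a) $\Rightarrow$ (b) $\Rightarrow$ (c) $\Rightarrow$ (d) $\Rightarrow$ (a), each of which is either essentially contained in an earlier result of the paper or follows directly from the definitions. Since the theorem is a pseudo-metrizable, sequential analogue of Theorem~\ref{thm:equivalence between asymptotic cptness and limit set cptness in uniformizable sp}, the overall structure of the proof should mirror that one, with the uniformizable/net machinery replaced by the pseudo-metrizable/sequence machinery already developed in this section.

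First, for (a) $\Rightarrow$ (b), I would invoke Corollary~\ref{cor:convergence to cpt and asymptotic seq cptness}. The only hypothesis needed beyond what is stated in this theorem is the first-countability of $X$, which is automatic because every pseudo-metrizable space is first-countable (a monotone countable local base at $x$ being $\{B_d(x;1/n)\}_{n=1}^{\infty}$ for any generating pseudo-metric $d$). The implication (b) $\Rightarrow$ (c) is immediate from the definitions: any monotone final sequence in a sequential directed set $S$ is in particular a subnet of $(X_s)_{s\in S}$, so the selection condition defining weak asymptotic sequential compactness is a special case of the one defining asymptotic sequential compactness.

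For (c) $\Rightarrow$ (d), I would apply Theorem~\ref{thm:asymptotic cptness and limit set cptness in pseudo-metrizable sp}, which asserts exactly that in a pseudo-metrizable space, weak asymptotic sequential compactness together with non-emptiness of each $X_s$ implies limit set compactness. Finally, (d) $\Rightarrow$ (a) is immediate from Definition~\ref{dfn:limit set cptness}, since limit set compactness of $(X_s)_{s\in S}$ means precisely that $L(X_s)_{s\in S}$ is a nonempty compact set to which $(X_s)_{s\in S}$ converges from above, and this is itself the content of (a) with the target set taken to be $L(X_s)_{s\in S}$.

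Since every step is either a reference to a prior result or a formal unwrapping of a definition, there is no genuine obstacle to be overcome in this proof; the substantive work has already been done in Corollary~\ref{cor:convergence to cpt and asymptotic seq cptness} and Theorem~\ref{thm:asymptotic cptness and limit set cptness in pseudo-metrizable sp}. The only point requiring a brief justification is the observation that pseudo-metrizability entails first-countability, which is what permits the use of Corollary~\ref{cor:convergence to cpt and asymptotic seq cptness} in the step (a) $\Rightarrow$ (b). The final write-up will therefore be short, consisting of four labeled implications with the relevant citations.
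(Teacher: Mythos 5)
Your proposal is correct and follows essentially the same route as the paper: the cycle (a) $\Rightarrow$ (b) $\Rightarrow$ (c) $\Rightarrow$ (d) $\Rightarrow$ (a), citing Corollary~\ref{cor:convergence to cpt and asymptotic seq cptness} (with first-countability of pseudo-metrizable spaces), the definitional implication, Theorem~\ref{thm:asymptotic cptness and limit set cptness in pseudo-metrizable sp}, and Definition~\ref{dfn:limit set cptness}, exactly as in the paper's proof. The only cosmetic remark is that in (b) $\Rightarrow$ (c) the selections range over $\bigcup_{t \ge s_n} X_t$ rather than $X_{s_n}$, so one implicitly picks $t_n \ge s_n$ with $y_n \in X_{t_n}$ and uses that $(X_{t_n})_{n=1}^\infty$ is a subnet; the paper glosses over this in the same way.
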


\begin{proof}
(a) $\Rightarrow$ (b):
This follows by Corollary~\ref{cor:convergence to cpt and asymptotic seq cptness} because every pseudo-metrizable space is first-countable.
(b) $\Rightarrow$ (c):
This follows by definition.
(c) $\Rightarrow$ (d):
This follows by Theorem~\ref{thm:asymptotic cptness and limit set cptness in pseudo-metrizable sp}.
(d) $\Rightarrow$ (a):
This is obvious.
\end{proof}

The following corollary is a combination of Theorems~\ref{thm:equivalence between asymptotic cptness and limit set cptness in uniformizable sp} and \ref{thm:asymptotic seq cptness and limit set cptness in pseudo-metrizable sp}.
The proof can be omitted.

\begin{corollary}\label{cor:asymptotic cptness and asymptotic seq cptness in pseudo-metrizable sp}
Suppose that $X$ is pseudo-metrizable.
Let $S$ be a sequential directed set and $(X_s)_{s \in S}$ be a net of nonempty subsets of $X$.
Then $(X_s)_{s \in S}$ is asymptotically compact if and only if it is asymptotically sequentially compact.
\end{corollary}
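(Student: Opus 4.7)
The plan is to deduce the corollary directly by chaining the two equivalence theorems through the common condition that $(X_s)_{s \in S}$ converges from above to some nonempty compact set of $X$. The key observation is that every pseudo-metrizable space is uniformizable (a pseudo-metric $d$ induces a natural uniformity whose entourages are generated by the sets $\Set{(x, y) \in X \times X}{d(x, y) < \ep}$ for $\ep > 0$, and the associated uniform topology coincides with the topology induced by $d$), so both Theorem~\ref{thm:equivalence between asymptotic cptness and limit set cptness in uniformizable sp} and Theorem~\ref{thm:asymptotic seq cptness and limit set cptness in pseudo-metrizable sp} are applicable under the hypotheses of the corollary.

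First I would invoke Theorem~\ref{thm:equivalence between asymptotic cptness and limit set cptness in uniformizable sp}, applied to the uniformizable space $X$ and the net $(X_s)_{s \in S}$ of nonempty subsets: the asymptotic compactness of $(X_s)_{s \in S}$ is equivalent to property (a) of that theorem, namely the convergence from above of $(X_s)_{s \in S}$ to some nonempty compact set of $X$. Next I would invoke Theorem~\ref{thm:asymptotic seq cptness and limit set cptness in pseudo-metrizable sp}, which requires the sequentiality of $S$ and the pseudo-metrizability of $X$ (both assumed): the asymptotic sequential compactness of $(X_s)_{s \in S}$ is equivalent to the same property (a).

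Combining these two equivalences yields the desired equivalence between asymptotic compactness and asymptotic sequential compactness, and the proof is complete. There is no real obstacle here: the only point worth emphasizing is the implicit verification that pseudo-metrizability implies uniformizability, so that Theorem~\ref{thm:equivalence between asymptotic cptness and limit set cptness in uniformizable sp} genuinely applies; once that is noted, the corollary is a one-line consequence of the two main theorems and so the proof can be omitted, as indicated in the excerpt.
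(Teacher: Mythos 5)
Your proposal is correct and matches the paper's intended argument: the corollary is stated as a combination of Theorem~\ref{thm:equivalence between asymptotic cptness and limit set cptness in uniformizable sp} and Theorem~\ref{thm:asymptotic seq cptness and limit set cptness in pseudo-metrizable sp}, chained through the common condition of convergence from above to some nonempty compact set, exactly as you do. Your explicit remark that pseudo-metrizability implies uniformizability is the only verification needed and is a welcome addition to the omitted proof.
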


\section*{Acknowledgments}

This work was supported by the Research Alliance Center for Mathematical Sciences of Tohoku University, the Advanced Institute for Materials Research of Tohoku University, the Research Institute for Mathematical Sciences for an International Joint Usage/Research Center located in Kyoto University, Grant-in-Aid for JSPS Fellows Grant Number JP15J02604, JSPS Grant-in-Aid for Scientific Research on Innovative Areas Grant Number JP17H06460, and JSPS Grant-in-Aid for Young Scientists Grant Number JP19K14565.

\addcontentsline{toc}{section}{Acknowledgments}

\appendix

\end{document}